
\documentclass[a4paper,11pt,reqno]{amsart}
\usepackage{upref}
\usepackage[mathcal]{euscript}

\usepackage{amsfonts,amssymb}

\usepackage{amsthm}

\usepackage{amsmath}

\usepackage{amscd}
\usepackage{tabularx}
\usepackage{arydshln}
\usepackage{verbatim}

 \usepackage{a4wide}

\theoremstyle{plain}

\newtheorem{theorem}{Theorem}[section]
\newtheorem{lemma}[theorem]{Lemma}
\newtheorem{proposition}[theorem]{Proposition}
\newtheorem{corollary}[theorem]{Corollary}
\newtheorem{Counter-example}[theorem]{Counter-example}
\newtheorem{remark}[theorem]{Remark}
\newtheorem{example}[theorem]{Example}

\theoremstyle{definition}

\newtheorem{definition}[theorem]{Definition}

\theoremstyle{remark}

\usepackage{amssymb}

%%%%%%%%%%%%%%%%%%%%%%%%%%%%%%%%%%%%%%%%%%%%%%%%%%%%%%%%%%%%%%%%%%%%%%%%%%%%%%%%%%% COLOR
 % PANTONE PROCESS-BLACK
 % PANTONE BLUE-072
 % PANTONE RED

\def\R{\mathbb R}

\def\C{\mathbb C}
\def\SS{\mathbb S}
\def\H{\mathbb H}

\newcommand{\GL}{\mathrm{GL}}

\newcommand{\SU}{\mathrm{SU}}

\newcommand{\SO}{\mathrm{SO}}

\newcommand{\SP}{\mathrm{Sp}}
\newcommand{\Ad}{\mathrm{Ad}}

\newcommand{\id}{\mathrm{id}}
\def\hom{\mathrm{Hom}}

\def\dim{\mathop{\hbox{\rm dim}}}

\begin{document}

\title[Invariant affine connections on odd dimensional   spheres]{Invariant affine connections \\on odd dimensional   spheres}

\author[C.~Draper]{Cristina Draper${}^\star$}
\address{Departamento de Matem\'atica Aplicada, 
%Escuela de las Ingenier\'{\i}as, 
Universidad de M\'alaga, 
%Ampliaci\'on Campus de Teatinos, 29071 M\'alaga, 
Spain}
%\curraddr{}
\email{cdf@uma.es}
\thanks{${}^\star$ Supported by the Spanish MEC grant MTM2013-41208-P and by the Junta de Andaluc\'{\i}a grants FQM-336, FQM-7156, with FEDER funds.}  %La primera financiada también por MINECO, no sé si hay que ponerlo

\author[A.~Garv\'{\i}n]{Antonio Garv\'{\i}n${}^\ast$}
%\curraddr{}
\email{garvin@uma.es}
\thanks{${}^\ast$ Supported by the
Spanish MEC grant MTM2013-41768-P and by the Junta de Andaluc\'{\i}a grant  FQM-213, with FEDER funds.
}

\author[F.J.~Palomo]{Francisco J. Palomo${}^\dagger$}
%\curraddr{}
\email{fjpalomo@ctima.uma.es}
\thanks{${}^\dagger$ Supported by   the Spanish MEC grant MTM2013-47828-C2-2-P
and by the Junta de Andaluc\'{\i}a  grant  FQM-4496, with FEDER funds.}

\subjclass[2010]{Primary     53C30,   	%Homogeneous manifolds
53C05;   	%Connections, general theory
Secondary 53C25,   	%Special Riemannian manifolds (Einstein, Sasakian, etc.)
53C20.  	%Global Riemannian geometry
}

\keywords{Odd-dimensional spheres, invariant connections, Riemann-Cartan manifolds, $\nabla$-Einstein manifolds.}

\date{}

%%%%%%%%%%%%%%%%%%%%%%%%%%%
%%%%%%%%%%%%%%%%%%%%%%%%%%%

\begin{abstract}
A Riemann-Cartan manifold is a  Riemannian manifold endowed with an affine connection which is compatible with the metric tensor.
This affine connection  is not necessarily torsion free. Under the assumption that the manifold is a homogeneous space, the notion of homogeneous Riemann-Cartan space is introduced in a natural way.
For the case of the odd dimensional spheres $\SS^{2n+1}$ viewed as homogeneous spaces of the special unitary groups,  the classical Nomizu's Theorem on invariant connections has permitted to obtain
an algebraical description of all the connections which turn the spheres $\SS^{2n+1}$ into   homogeneous Riemann-Cartan spaces. 
The expressions  of such connections as covariant derivatives are given by means of several invariant tensors: 
the ones of the usual Sasakian structure of the sphere;
   an invariant 3-differential form coming from  a $3$-Sasakian structure on $\SS^7$;
 and  the involved ones in the almost contact metric structure of  $\SS^5$ 
 provided by  its natural embedding into the nearly K\"ahler  manifold $\SS^6$.
 Furthermore, the invariant connections sharing geodesics with the Levi-Civita one have also been completely described.  Finally,  $\SS^3$ and $\SS^7$ are characterized as the unique odd-dimensional spheres which admit nontrivial invariant connections satisfying an Einstein-type equation.
\end{abstract}

\maketitle

%%%%%%%%%%%%%%%%%%%%%%%%%%%%%%
%%%%%%%%%%%%%%%%%%%%%%%%%%%%%%

\section{Introduction}

This work is mainly devoted to give a method to construct Riemann-Cartan manifolds from the theory of invariant connections on homogeneous spaces, as well as to use algebraical tools   to obtain remarkable properties of the involved affine connections. 
In order to explain our point of view, let us first recall some general facts about both topics.

On the  one hand, the space of invariant connections of a homogeneous space $M=G/H$ has been well understood since the classical paper \cite{Draper:teoNomizu}. Indeed, starting from a fixed reductive decomposition $\mathfrak{g}=\mathfrak{h}\oplus \mathfrak{m}$ of the Lie algebra $\mathfrak{g}$ of $G$, Katsumi Nomizu established
a very fruitful one-to-one correspondence between the set of all invariant connections on $M$ and the set of all bilinear functions $\alpha$ on $\mathfrak{m} $ with values in $\mathfrak{m}$ which are invariant by $\mathrm{Ad}(H)$ (Section~2). From an algebraic point of view, the search of invariant connections on $M$ reduces to find the algebra structures $\alpha$ on $\mathfrak{m}$ which satisfy the condition $\mathrm{Ad}(H)\subset\mathrm{Aut}(\mathfrak{m},\alpha)$. Thus, we have to look for algebra structures $\alpha$ on $\mathfrak{m}$ which have a fixed subgroup of automorphisms.  In the terminology of \cite{Draper:S6}, $(\mathfrak{m},\alpha)$ is called the
\emph{connection algebra} of the corresponding invariant connection. In general, the properties of the connection algebras are not very well-known, except for a few cases. For instance, under the assumption that $M$ is a symmetric space and $G$ is simple,   then $\mathfrak{m}$ can be identified with the traceless elements of a simple 
finite-dimensional Jordan algebra and, up to scalars,  $\alpha$ corresponds with the projection of the Jordan product  \cite{Draper:losLts}. Also, for $M=\SS^6$ treated as $G_2$-homogeneous space, the related connection algebra  $(\mathfrak{m},\alpha)$ turns out to be a color algebra (see details in \cite{Draper:S6}).

On the other hand,   a Riemann-Cartan manifold is a triple $(M,g,\nabla)$ where $(M,g)$ is a Riemannian manifold and $\nabla$ is a metric affine connection (i.e., $\nabla g=0$). Thus, a nonzero torsion tensor $T^\nabla$ is allowed. 
Recall that a slight variation on the classical argument based on the Koszul formula actually shows that every metric affine connection is determined by its torsion tensor.
\'{E}.~Cartan was the first one to investigate affine connections with nonvanishing torsion tensor. From a mathematical approach, he appealed  to consider not only the Levi-Civita connection on a Riemannian manifold. In fact, he demanded in 1924 \cite{Cartancita}: {\it  \lq \lq given a manifold ... attribute to this manifold the affine connection that reflects in the simplest way the relations  of this manifold with the ambient space\rq \rq}. Roughly speaking, the connection should be adapted to the geometry under consideration. 
For illustrating this fact, let us recall at this point the characteristic connection  $\nabla^c$ on every Sasaki manifold $M$ (Section~4). This connection is metric and preserves the tensor fields involved in the definition of a Sasaki manifold   (Example~\ref{222}). The   connection  $\nabla^c$ is not free-torsion in general, so that the manifold $M$ is moreover endowed with the corresponding torsion tensor.
For a general Riemann-Cartan manifold $(M,g,\nabla)$, the type of the torsion tensor can be classified algebraically (see, for instance, \cite{Draper:Tricerri}). The main type for us will be the case of skew-symmetric torsion. Under this assumption, the connection $\nabla$ has the same geodesics as the Levi-Civita connection $\nabla^{g}$ and the manifold $M$ is endowed with a $3$-differential form $\omega_{_\nabla}$ (\ref{tos}).  
The excellent survey \cite{Draper:surveyagricola} provides a bridge to realize how \lq\lq{\it the skew-symmetric torsion enters in scene as one of the main tools to understand nonintegrable geometries}\rq\rq. This survey includes both mathematical and physical motivations as well as a wide variety of examples of metric connections 
 with torsion.  Another  approach, mainly devoted to the study of geometric vector fields and integral formulas on Riemann-Cartan manifolds, can be found in \cite{Draper:losrusosRiemCar}.

The existence of a metric affine connection which satisfies certain additional conditions permits to characterize remarkable geometric properties. In fact,   Ambrose-Singer Theorem states that a connected, complete and simply connected Riemannian manifold $(M,g)$ is homogeneous
if and only if there is a metric affine connection $\nabla$ such that $\nabla R^g=0$ and $\nabla T^{\nabla}=0$, where $R^g$ denotes the curvature tensor of the Levi-Civita connection and $T^{\nabla}$ the torsion tensor of $\nabla$ (see  \cite{Draper:Tricerri} and references therein).

Focussing now on our aim, let us assume $M=G/H$ is endowed with a homogeneous Riemannian metric $g$. Thus, the Lie group $G$ acts transitively by $g$-isometries on the manifold $M$. For an arbitrary affine connection $\nabla$ on $M$, there are two natural conditions to be imposed.  On the one hand, the homogeneity property seems to require that $\nabla$ is {\it preserved} by the action of $G$ 
 and on the other one, it seems natural to claim that the parallel transport associated to $\nabla$ is a $g$-isometry (i.e., $\nabla g=0$). These two properties lead to the notion of homogeneous Riemann-Cartan manifold (Definition \ref{HRCM}). Our main goal 
of study in this paper  is the canonical odd dimensional sphere $\SS^{2n+1}$ viewed as homogeneous space of the special unitary group $\SU(n+1)$, that is,
$\SS^{2n+1}=\SU( n+1)/\SU(n)$ ($n\ge1$). At this point it is interesting to remark that there is only one invariant (metric) connection for $\SS^{n}=\SO(n+1)/\SO(n)$: the Levi-Civita connection. Thus, the odd dimensional spheres as coset of special unitary groups can be seen as the easier nontrivial example of homogeneous Riemann-Cartan manifold on spheres. Indeed, there are $\SU(n+1)$-invariant metric connections on $\SS^{2n+1}$ which are different from the Levi-Civita connection.
We obtain in theorems \ref{principal4},  \ref{principal7},  \ref{principalS5} and \ref{principalS3},
the explicit expressions for such $\SU(n+1)$-invariant metric connections on
$\SS^{2n+1}$ ($n\ge4$, $n=3$, $n=2$ and $n=1$, respectively).  
As far as we know, it is not usual in the literature to recover as covariant derivatives the invariant connections from the algebraic data (i.e., bilinear operations $\alpha$ on $\mathfrak{m}$).

As   was mentioned, a specially interesting case of metric connections corresponds to those with skew-symmetric torsion.
Thus, the following two questions arise in a natural way.
\begin{quote}
{\it 
How many $\SU(n+1)$-invariant metric connections on $\SS^{2n+1}$ with skew-symmetric torsion are there?

\noindent Which are the tensor  fields which permit to give explicit expressions of such connections?}
\end{quote}

The answer to these questions depends on the dimension of the sphere $\SS^{2n+1}$ as follows. The space of $\SU(n+1)$-invariant metric  connections with skew-symmetric torsion has one free parameter except for $\SS^5$ and $\SS^7$ and, in both cases, such space depends on three free parameters. For the spheres $\SS^{2n+1}$ with $n\neq 2, 3$,  the $3$-differential form $\omega_{_{\nabla}}$ obtained from the torsion of the invariant metric connection $\nabla$ is proportional to $\eta \wedge d\eta$ (Remark \ref{111}). Here $\eta$ is the $1$-differential form metrically equivalent to the Hopf vector field $\xi$ (\ref{elcampo}). Thus, these connections can  explicitly be written from the usual Sasakian structure on $\SS^{2n+1}$ (Sections 3 and 4).

For the $\SS^5$-case, in order to find the explicit expressions of the invariant connections,
   it is necessary to use, besides   the Sasakian structure, an almost contact metric structure different from the usual one. This almost contact structure (but not contact) is closely related with the nearly K\"{a}hler structure on $\SS^6$. Thus, the great amount of invariant connections on $\SS^5$ is caused by the almost contact metric structure obtained when seeing  $\SS^5$ as a totally geodesic hypersurface of  $\SS^6$  (Section~6).
 
We think that the most interesting connections arise for the sphere $\SS^7$. In this case, the main difficulty to achieve the explicit expressions for the metric invariant connections lies in finding a $\SU(4)$-invariant $3$-differential form on $\SS^7$ which permits to write down the   connections. We have considered the canonical $3$-Sasakian structure on $\SS^7$ and we have introduced the   $3$-differential form 
on $\SS^7$ given by
$$
\Omega=\frac{1}{2}( \eta_{2}\wedge d\eta_{2}- \eta_{3}\wedge d\eta_{3}),
$$
where $\eta_{2}$ and $\eta_{3}$ are the $1$-differential forms associated with  the other Sasakian structures on $\SS^7$ (Section 5). The most technical part of this paper is devoted to show, in lemmas~\ref{algebra2}, \ref{le_positivo}, \ref{algebra} and Proposition~\ref{invarianzaCris}, that the $3$-differential form $\Omega$ is $\SU(4)$-invariant.
The $3$-differential forms $\omega_{_{\nabla}}$ obtained from the torsions of all the invariant metric connections with skew-torsion $\nabla$ on $\SS^7$  are now given by
\begin{equation}\label{eq_lastorsionesS7}
\omega_{_{\nabla}}=\frac12 r  \, \eta_{1}\wedge d\eta_{1}+ \mathrm{Re}(q) \left( \eta_{2}\wedge d\eta_{2}- \eta_{3}\wedge d\eta_{3}\right)- \mathrm{Im}(q) \left(\eta_{2}\wedge d\eta_{3}+\eta_{3}\wedge d\eta_{2}\right),
\end{equation}
for $r\in \R$ and $q\in \C$. Hence (\ref{eq_lastorsionesS7}) parametrizes the vector space of the $\SU(4)$-invariant 3-differential forms on $\SS^7$. Recall that a different $3$-differential form, called the \emph{canonical} $G_{2}$-\emph{structure}, has been considered for arbitrary $7$-dimensional $3$-Sasakian manifolds  \cite{AgriFri} and there is a unique metric (characteristic) connection preserving the $G_{2}$-structure with   skew-symmetric torsion. The family of $\SU(4)$-invariant connections on $\SS^7$ does not contain the characteristic connection of the  canonical $G_{2}$-structure on $\SS^7$ (Remark~\ref{remark_conexG2}).
From the $3$-differential form $\Omega$, we have introduced a skew-symmetric bilinear map $\Theta$ on $\xi^{\perp}$ which satisfies $g(\Theta (X,Y), X)=0$ for all $X,Y\in \xi^{\perp}$ (Remark \ref{operaciontheta}).
\smallskip

Paraphrasing R.~Thom: \emph{Are there any best (or nicest, or distinguished) Riemann-Cartan structure on a manifold $M$?} (see \cite[Introduction]{Besse}). An answer has been proposed in \cite{AgriFerr} with the notion of \lq\lq Einstein manifold with skew-torsion\rq\rq  as follows.
For an arbitrary Riemann-Cartan manifold $(M,g,\nabla)$, the usual notions of Ricci tensor $\mathrm{Ric}^{\nabla}$ and scalar curvature $s^{\nabla}$ have natural  generalizations. 
Then, a Riemann-Cartan manifold $(M,g,\nabla)$ with skew-symmetric torsion is said to be  \emph{Einstein with skew-torsion} whenever
\begin{equation}\label{einstein}
\mathrm{Sym}(\mathrm{Ric}^{\nabla})=\frac{s^{\nabla}}{\dim M}\, g,
\end{equation}
where $\mathrm{Sym}$ denotes the symmetric part of the corresponding tensor. This notion is also deduced from a variational principle and, as   is expected, it reduces to the usual notion of Einstein manifold when one considers the Levi-Civita connection. The sphere $\SS^{2n+1}$ with its canonical Riemannian metric has constant sectional curvature $1$ and hence it is trivially  an Einstein manifold in the usual sense.
Thus, we arrive to the following question.
\begin{quote}
{\it Are there nontrivial examples of $\SU(n+1)$-invariant metric connections on $\SS^{2n+1}$ such that $(\SS^{2n+1},g,\nabla)$ is Einstein with skew-torsion?
}
\end{quote}
Again the answer strongly depends on the dimension of the sphere $\SS^{2n+1}$. In order to check the $\nabla$-Einstein equation (\ref{einstein}), we have computed the symmetric part of the Ricci tensors of all $\SU(n+1)$-invariant metric connections with skew-symmetric torsion. It turns out that, for $n\geq 4$ and $n=2$, the only $\SU(n+1)$-invariant metric connection with nonvanishing skew-symmetric torsion which satisfies the $\nabla$-Einstein equation is the Levi-Civita connection.

For dealing with the spheres $\SS^7$ and $\SS^3$, first recall that a classical result by \'{E}.~Cartan and J.A.~Schouten states that a Riemannian manifold $M$ which admits a flat metric connection with totally skew-torsion splits and each irreducible factor is either a compact simple Lie group or the sphere $\SS^7$ \cite{CartanSchouten} (see also \cite{AgriFri2}). Taking into account that $\SS^7$ and $\SS^3$ are parallelizable manifolds by Killing vector fields, it is  possible to give examples of flat metric connections with skew-torsion on $\SS^7$ and $\SS^3$ (details in Remark~\ref{flatEinstein}). These flat metric connections trivially satisfy the $\nabla$-Einstein equation. In this paper, we have found stricking new families of invariant metric connections   on $\SS^7$ and $\SS^{3}$  satisfying the $\nabla$-Einstein equation which are not all of them flat. 
Indeed, for  $\SS^7$ and for each choice of parameters $r\in \R$ and $q\in \C$ with $|q|^2=r^2$, we have obtained in Corollary~\ref{777} one $\SU(4)$-invariant metric connection with skew-symmetric torsion which satisfies the $\nabla$-Einstein equation. In the particular case $r=0$, we recover the Levi-Civita connection $\nabla^{g}$. Several comments about flatness are compiled in Table~\ref{tablaRiccis}. 

 For the  $\SS^3$-case, there is a one free parameter family of invariant metric connections with skew-symmetric torsion   satisfying the $\nabla$-Einstein equation. It is interesting to point out that, formally, this family is the same one as obtained  with skew-symmetric torsion on every odd dimensional sphere $\SS^{2n+1}$ for $n\geq 4$, nevertheless  the $\nabla$-Einstein equation is not satisfied unless $n=1$.

\smallskip

This paper is organized as follows. First, Sections 2 and 3 give the basic background. Several well-known facts are shortly recalled in order to fix some notation since we have tried to keep the paper as self-contained as possible. Section~2 is mainly devoted to homogeneous spaces and to the basic definitions on Riemann-Cartan manifolds. Then, for the sake of completeness,  Nomizu\rq{s} Theorem is stated as   will be used later. Section~2 ends with the characterization of the bilinear operations $\alpha\colon \mathfrak{m}\times \mathfrak{m}\to \mathfrak{m}$ which correspond  to  invariant metric  connections. Although  this characterization can be concluded from a broader setting, we include here an {\it ad hoc} proof. Section~3 contains   the basic geometrical facts on the odd dimensional spheres $\SS^{2n+1}$ treated as homogeneous spaces $\SU(n+1)/\SU(n)$.  Particularly, we focus on the usual Sasakian structure of $\SS^{2n+1}$ and on the Hopf map. The Sasakian structure of $\SS^{2n+1}$ is the key tool to describe as covariant derivatives  the $\SU(n+1)$-invariant connections for $n\geq 4$ and $n=1$. This section also encloses usual conventions to be used throughout the article.
Section~4 is devoted simultaneously to all the odd dimensional spheres such that $n\geq 4$. The space of invariant connections has seven free parameters which reduce to three when we consider metric connections and to one for the case of skew-symmetric torsion.   These connections are explicitly described in Theorem~\ref{principal4}.
The remaining sections  deal with   low dimensional spheres and they are structured in the same way as  Section~4. The  sphere $\SS^7$ is studied in Section~5, which includes a remarkable   result   stating the invariance of the $3$-differential form $\Omega$ (Proposition~\ref{invarianzaCris}). This result requires several additional lemmas of linear algebra. Sections 5 and 6 are concerned with $\SS^5$ and $\SS^3$ respectively.
The work finishes with an appendix where   the main results   are compiled in a table. Some of these results   were partially announced in \cite{nuestroPadge}.

%%%%%%%%%%%%%%%%%%%%%%%%%%%%%%
%%%%%%%%%%%%%%%%%%%%%%%%%%%%%%\Black

\section{Preliminaries}

Let $M$ be a (smooth) manifold and $G$ a Lie group which acts transitively on the left on $M$.  As usual, we write a dot to denote the action of $G$ on $M$ and so, for $\sigma \in G$, the left translation by $\sigma$ will be given by $\tau_{\sigma}(p)=\sigma \cdot p$ for all $p\in M$. Fix a point $o \in M$ and consider the isotropy subgroup $H$ at $o$. It is well-known that the map
$ G/H\to M$ given by $\sigma\,H \mapsto \sigma\cdot o$ is a diffeomorphism, where $G/H$ is the set of left cosets modulo $H$ considered with the unique manifold structure such that the natural projection $\pi\colon G\to G/H$ is a submersion. The manifold $M$ is called a $G$-homogeneous space. For each $\sigma \in G$ and $X\in \mathfrak{X}(M)$, the vector field $\tau_{\sigma}(X)\in \mathfrak{X}(M)$ is given at every $p\in M$ by
$$
 (\tau_{\sigma}(X) )_p:=(\tau_{\sigma})_{*}(X_{\sigma^{-1}\cdot p}).
$$
That is, $\tau_{\sigma}(X)$ is the unique vector field on $M$ such that the following diagram commutes
\begin{equation*} 
\begin{CD}
T M @> (\tau_{\sigma})_{*}>> T M\\
@A X   AA @AA\tau_{\sigma}(X)A \\
 M @>\tau_{\sigma}>>  M.
\end{CD}
\end{equation*}
Let $\mathfrak{g}$  be the Lie algebra of the left invariant vector fields on $G$. For every $A\in \mathfrak{g}$, we denote by $A^{+}\in \mathfrak{X}(M)$ the vector field given at $p\in M$ by
$$
A^{+}_p:=\frac{d(\mathrm{exp}\,tA \cdot p)}{dt}\mid_{t=0}.
$$
The map $A\mapsto A^{+}$ provides an antihomomorphism of Lie algebras from $\mathfrak{g}$ to $\mathfrak{X}(M)$. The Lie subalgebra $\mathfrak{g}^{+}=\{A^+\mid A\in \mathfrak{g}\}\le \mathfrak{X}(M)$ locally spans all $\mathfrak{X}(M)$. That is, every point $p\in M$ has an open neighborhood $V$ such that for every $X\in \mathfrak{X}(M)$ there are smooth functions $f_{j}$ on $V$ and $A_{j}\in \mathfrak{g}$ with $X\vert _{V}=\sum f_{j}{A^{+}_{j}}\vert_{V}$.
The following formula holds for each $\sigma \in G$  and $p\in M$,
\begin{equation}\label{AD}
(\tau_{\sigma})_{*}(A^{+}_p)=(\mathrm{Ad}_{\sigma}A)^{+}_{\sigma\cdot p},
\end{equation}
where $\mathrm{Ad}$ denotes the adjoint representation of the Lie group $G$.
An affine connection $\nabla$ on $M$ is said to be $G$-\emph{invariant}  if, for every $\sigma \in G$ and for all $X,Y\in\mathfrak{X}(M)$,
$$
\tau_{\sigma}(\nabla_{X}Y)=\nabla_{_{\tau_{\sigma}(X)}}\tau_{\sigma}(Y).
$$

\vspace{0.2cm}

Recall \cite{Draper:surveyagricola} that a \emph{Riemann-Cartan manifold} 
is a triple $(M,g,\nabla)$ where $(M,g)$ is a Riemannian manifold and $\nabla$ is a \emph{metric} affine connection. That is,
$
X(g(Y,Z))=g(\nabla_{X}Y,Z)+g(Y,\nabla_{X}Z)
$
holds for all $X,Y,Z\in \mathfrak{X}(M)$. The \emph{torsion} tensor field of $\nabla$ is defined by $T^{\nabla}(X,Y)=\nabla_{X}Y-\nabla_{Y}X-[X,Y]$ for $X,Y\in \mathfrak{X}(M)$ and it is not assumed to be $T^{\nabla}=0$, in general. Therefore, Riemann-Cartan manifolds can be seen as a generalization of the usual Riemannian manifolds where
the metric affine connection under consideration is always  the Levi-Civita connection $\nabla^{g}$, which is characterized by the condition $T^{\nabla^{g}}=0$.
Contractions of the Riemann curvature\footnote{Our convention on the sign is
$R(X,Y)Z=\nabla_{X}
\nabla_{Y}Z-\nabla_{Y}\nabla_{X}Z-\nabla_{[X,Y]}Z$.} of $\nabla$ yield the usual invariants: the Ricci curvature tensor $\mathrm{Ric}^{\nabla}$ and the scalar curvature $s^{\nabla}$. As usual, quantities referring to the Levi-Civita connection $\nabla^g$ will carry an index $g$ and quantities associated with a metric affine connection $\nabla$ will have an   index $\nabla$. 

The \emph{difference tensor} between the Levi-Civita connection $\nabla^g$ and an arbitrary affine connection (metric or not) $\nabla$ is the $(1,2)$-tensor field $\mathcal{D}$ given by
$
\nabla_{X}Y=\nabla^{g}_{X}Y+\mathcal{D}(X,Y)
$
for $X,Y\in \mathfrak{X}(M)$.
The torsion tensor $T^{\nabla}$ of the affine connection $\nabla$ satisfies $T^{\nabla}(X,Y)=\mathcal{D}(X,Y)-\mathcal{D}(Y,X)$. Observe that  $\nabla$ has the same geodesics as the Levi-Civita connection $\nabla^{g}$ if and only if $\mathcal{D}$ is skew-symmetric. In such a case, $T^{\nabla}(X,Y)=2(\nabla_{X}Y-\nabla^{g}_{X}Y)$ holds.  

Assume now $(M,g,\nabla)$ is a Riemann-Cartan manifold and set
\begin{equation}\label{tos}
\omega_{_\nabla}(X,Y,Z):=g(T^{\nabla}(X,Y),Z),
\end{equation}
for $X,Y,Z\in \mathfrak{X}(M)$. Then, the connection $\nabla$ is said
to have \emph{totally skew-symmetric torsion} (briefly, skew-torsion) if $\omega_{_\nabla}$ defines a $3$-differential form on $M$. It is an easy matter to show that a metric affine connection $\nabla$
shares geodesics with the Levi-Civita connection $\nabla^{g}$ if and only if $\nabla$ has totally skew-symmetric torsion.  

\vspace{0.2cm}

Under the additional assumption that $M$ is a $G$-homogeneous space, the following notion arises in a natural way.

\begin{definition}\label{HRCM}
 $(M,g,\nabla)$ is called a $G$-\emph{homogeneous Riemann-Cartan  space}  when $M$ is a $G$-homogeneous space endowed with a $G$-invariant Riemannian metric $g$ (i.e., $\tau_{\sigma}$ is a $g$-isometry for all $\sigma\in G$) and a $G$-invariant metric affine connection $\nabla$.
\end{definition}

\vspace{0.2cm}
Now we state   Nomizu's Theorem \cite{Draper:teoNomizu} on $G$-invariant affine connections in a suitable way for our aims. Several definitions and notations are required. A homogeneous space $M=G/H$ is said to be \emph{reductive} if the Lie algebra $\mathfrak{g}$ of $G$ admits a vector space decomposition
\begin{equation}\label{eq_descomposicionreductiva}
\mathfrak{g}=\mathfrak{h}\oplus\mathfrak{m},
\end{equation}
for $\mathfrak{h}$ the Lie algebra of $H$ and $\mathfrak{m}$ an $\text{Ad}(H)$-invariant subspace (i.e., $\text{Ad}(H)(\mathfrak{m})\subset \mathfrak{m}$). In this case, $\mathfrak{g}=\mathfrak{h}\oplus\mathfrak{m}$ is called a reductive decomposition of $\mathfrak{g}$. The condition $\text{Ad}(H)(\mathfrak{m})\subset \mathfrak{m}$ implies that $[\mathfrak{h},\mathfrak{m}]\subset\mathfrak{m}$, and both are equivalent when $H$ is connected. The differential map $\pi_{*}$ of the projection $\pi\colon G\to M=G/H$ gives a linear isomorphism
$(\pi_{*})_e\vert_{\mathfrak{m}}\colon\mathfrak{m}\to T_{o}M$. Note that $\pi_{*}(A)=A^{+}_{o}$ for all $A\in \mathfrak{m}$. The isotropy representation $H \to \GL(T_{o}M)$ given by $\sigma \mapsto (\tau_{\sigma})_{*}$ corresponds under $\pi_{*}$ to $\text{Ad}\colon H\to  {\GL}(\mathfrak{m})$, according to Equation~(\ref{AD}).

\begin{remark}
\rm {It is often said that a space $M$ is \lq\lq reductive\rq\rq, but this is an abuse of notation. The reductivity is not a geometric property of $M$. A manifold
$M$ may admit different coset descriptions $G/H$ and $G'/H'$, one of which is reductive and the other one is not.}
\end{remark}  

Nomizu's Theorem \cite{Draper:teoNomizu} can be formulated as follows:

\begin{theorem}\label{nomizu}
Let $G/H$ be a reductive homogeneous space with a fixed reductive decomposition (\ref{eq_descomposicionreductiva}).
Then, there is a bijective correspondence between the set of   $G$-invariant affine connections $\nabla$ on $G/H$ and the vector space of bilinear maps $\alpha\colon \mathfrak{m}\times\mathfrak{m}\to\mathfrak{m}$ such that $\mathrm{Ad}(H)\subset\mathrm{Aut}(\mathfrak{m},\alpha)$, that is, such that
$
\alpha\big(\Ad(\sigma)(A),\Ad(\sigma)(B)\big)=\Ad(\sigma)(\alpha(A,B))
$ for all $A,B\in\mathfrak{m}$ and $\sigma\in H$. In case $H$ is connected, this equation is equivalent to
\begin{equation}\label{casoconexo}
[h,\alpha(A,B)]=\alpha([h,A],B)+\alpha(A,[h,B])
\end{equation}
for all $A,B\in\mathfrak{m}$ and $h\in\mathfrak{h}$.
\end{theorem}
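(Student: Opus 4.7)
My plan is to define a natural map $\nabla \mapsto \alpha$, check its $\mathrm{Ad}(H)$-invariance from the $G$-invariance of $\nabla$, and then invert it by transporting values from the origin via $G$-invariance. Identifying $T_oM$ with $\mathfrak{m}$ through the isomorphism $(\pi_*)_e|_{\mathfrak{m}}$, I would set
$$\alpha(A,B) := (\nabla_{A^+}B^+)_o \qquad \text{for } A,B\in\mathfrak{m}.$$
Bilinearity is immediate: tensoriality in the first slot and pointwise $\mathbb{R}$-linearity in the second make the right-hand side a well-defined element of $T_oM \cong \mathfrak{m}$.

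For the $\mathrm{Ad}(H)$-invariance, I would first establish the auxiliary identity $\tau_\sigma(A^+) = (\mathrm{Ad}_\sigma A)^+$ as a global equality of vector fields on $M$ (a direct consequence of (\ref{AD}) and the definition of $\tau_\sigma(X)$ applied pointwise). Given $\sigma\in H$, the condition $\sigma\cdot o = o$ together with the $G$-invariance of $\nabla$ then yields
$$(\tau_\sigma)_*\big((\nabla_{A^+}B^+)_o\big) = \big(\nabla_{\tau_\sigma(A^+)}\tau_\sigma(B^+)\big)_o = \big(\nabla_{(\mathrm{Ad}_\sigma A)^+}(\mathrm{Ad}_\sigma B)^+\big)_o.$$
Since $(\pi_*)_e|_\mathfrak{m}$ intertwines the isotropy action of $H$ on $T_oM$ with $\mathrm{Ad}|_H$ on $\mathfrak{m}$ (again by (\ref{AD}) evaluated at $o$), this translates exactly to $\mathrm{Ad}_\sigma(\alpha(A,B)) = \alpha(\mathrm{Ad}_\sigma A,\mathrm{Ad}_\sigma B)$.

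For the inverse direction, given an $\mathrm{Ad}(H)$-invariant $\alpha$, I would reconstruct $\nabla$ pointwise using $G$-invariance. At the origin prescribe $(\nabla_{A^+}B^+)_o := \alpha(A,B)$ for $A,B\in\mathfrak{m}$; at $p = \sigma\cdot o$ define the analogous values by pushing forward via $(\tau_\sigma)_*$. Since the fundamental vector fields $\{A^+ : A\in\mathfrak{g}\}$ locally span $\mathfrak{X}(M)$, this data extends uniquely via $C^\infty(M)$-linearity in the first slot and the Leibniz rule in the second. The construction is $G$-invariant by its very definition, and the resulting connection recovers the original $\alpha$ under the first map, so the correspondence is bijective.

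I expect the main obstacle to be the well-definedness of this inverse construction: two choices $\sigma,\sigma'\in G$ with $\sigma\cdot o = \sigma'\cdot o$ differ by an element of $H$, and independence of the prescription at $p$ under this ambiguity is precisely what the $\mathrm{Ad}(H)$-invariance of $\alpha$ guarantees. The additional work is checking smoothness and that the prescription is compatible with the Leibniz rule on non-fundamental vector fields, which follows from the fact that fundamental vector fields form a local frame and the tensorial character of a connection. Finally, the equivalence with the infinitesimal condition (\ref{casoconexo}) when $H$ is connected is obtained by differentiating the group identity along $t\mapsto\exp(th)$ at $t=0$; the reverse implication integrates the Lie algebra identity through the exponential map, which reaches all of $H$ under the connectedness hypothesis.
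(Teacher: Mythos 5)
The paper never proves this theorem: it is quoted verbatim from Nomizu's 1954 article, and the surrounding text only fixes the normalization of the correspondence via the Nomizu operators (\ref{OpNo}), namely $\pi_*\big(\alpha_{_\nabla}(A,B)\big)=[A^+,B^+]_o-\nabla_{A^+_o}B^+$ as in (\ref{con}). Your map $\alpha(A,B):=(\nabla_{A^+}B^+)_o$ differs from the paper's by the fixed $\mathrm{Ad}(H)$-equivariant term $-[A,B]_{\mathfrak{m}}$ (recall $[A^+,B^+]=-[A,B]^+$), so it does set up \emph{a} bijection between the same two sets, but it is not the correspondence the rest of the paper relies on: with your convention the canonical connection corresponds to $-[\,\cdot\,,\cdot\,]_{\mathfrak{m}}$ rather than to $0$, and the torsion and curvature formulas (\ref{tor}) and (\ref{cur}) would have to be rewritten. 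Your forward direction is correct, and so is the reduction to (\ref{casoconexo}) for connected $H$, up to one imprecision: $\exp(\mathfrak{h})$ need not be all of $H$ even when $H$ is connected; it only generates $H$ as a group, which is what you should invoke.

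The genuine gap is in the inverse construction. Prescribing $(\nabla_{A^+}B^+)_o=\alpha(A,B)$ only for $A,B\in\mathfrak{m}$ and then ``pushing forward via $(\tau_\sigma)_*$'' does not yet determine the connection at $p=\sigma\cdot o$: invariance transports $(\nabla_{A^+}B^+)_p$ back to $\big(\nabla_{(\mathrm{Ad}_{\sigma^{-1}}A)^+}(\mathrm{Ad}_{\sigma^{-1}}B)^+\big)_o$, and for a general $\sigma\in G$ the elements $\mathrm{Ad}_{\sigma^{-1}}A$ and $\mathrm{Ad}_{\sigma^{-1}}B$ have nonzero $\mathfrak{h}$-components, which your prescription at $o$ does not cover. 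The first slot is harmless by tensoriality, since $C^+_o=0$ for $C\in\mathfrak{h}$; but in the second slot you must observe that $\nabla_{X_o}C^+$ is forced, independently of the connection, to equal $[X,C^+]_o$ whenever $C\in\mathfrak{h}$ (a covariant derivative of a field vanishing at the base point depends only on its $1$-jet there). In other words, $\alpha$ must first be completed to a linear map $\Lambda\colon\mathfrak{g}\to\mathfrak{gl}(\mathfrak{m})$ whose restriction to $\mathfrak{h}$ is the linear isotropy action; this is exactly where the term $[[A,B]_{\mathfrak{h}},C]$ in (\ref{cur}) originates. Without this completion the transported prescription is simply undefined away from the origin, so the well-definedness under the ambiguity $\sigma\cdot o=\sigma'\cdot o$ that you anticipate is not the only missing step, and the $\mathrm{Ad}(H)$-equivariance of $\alpha$ must be checked jointly with the automatic equivariance of the $\mathfrak{h}$-part of $\Lambda$.
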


\begin{remark}
{\rm Therefore, in the connected case, the
      set of   $G$-invariant affine connections $\nabla$ on $G/H$
 is in one-to-one correspondence to the set of
 $\mathfrak{h}$-invariant bilinear maps $\alpha\colon \mathfrak{m}\times\mathfrak{m}\to\mathfrak{m}$
 (i.e.,     $\text{ad}( \mathfrak{h})\subset\mathfrak{der}(\mathfrak{m},\alpha)$).
 This set is also in bijective correspondence with the vector space of the homomorphisms of $\mathfrak{h}$-modules 
 \begin{equation*} 
 \hom_{\mathfrak{h}}(\mathfrak{m}\otimes\mathfrak{m},\mathfrak{m}),
 \end{equation*}
 through the map which sends  each bilinear map $\alpha$   to   the $\mathfrak{h}$-module homomorphism $\widetilde\alpha$ given by $\widetilde\alpha(X\otimes Y) =\alpha(X,Y)$ for all $X,Y\in\mathfrak{m}$.}
\end{remark}

In order to recover a $G$-invariant affine connection $\nabla$ from the bilinear map $\alpha_{_\nabla}$ attached to $\nabla$ through Theorem~\ref{nomizu}, we will give several explanations on the geometrical meaning of the maps $\alpha$'s in Theorem~\ref{nomizu}. Firstly, recall that for every affine connection $\nabla$ on an arbitrary manifold $M$ and any field $Z\in \mathfrak{X}(M)$,
the \emph{Nomizu operator}  $L^{\nabla}_{Z}$ related to $\nabla$ and   $Z$ is the $(1,1)$-tensor field on $M$ given by
\begin{equation}\label{OpNo}
L^{\nabla}_{Z}X:=[Z,X]-\nabla_{Z}X,
\end{equation}
for all $X\in \mathfrak{X}(M)$. Under the assumptions that  $M$ is $G$-homogeneous and $\nabla$ is $G$-invariant, the following diagram commutes for every $p=\sigma \cdot o\in M$ and $A\in \mathfrak{g}$,

\begin{equation}\label{diagramaA}
\begin{CD}
T_{o} M @>L_{{(\mathrm{Ad}_{\sigma^{-1}}A)}^{+}}^{\nabla}>> T_{o} M\\
@A(\tau_{\sigma^{-1}})_{*}   AA @AA(\tau_{\sigma^{-1}})_{*}A \\
T_{p}M @>>L^{\nabla}_{{A^+}}> T_{p}M.
\end{CD}
\end{equation}
Assume now $M=G/H$ is a reductive homogeneous space with a fixed reductive decomposition  (\ref{eq_descomposicionreductiva}). Then, for $A\in \mathfrak{m}$ the linear map $\alpha_{_\nabla}(A,-)$ corresponding to the $G$-invariant connection $\nabla$ is determined as the unique map such that the following diagram commutes

\begin{equation}\label{diagramaB}  
\begin{CD}
T_{o} M @>L_{A_o^{+}}^{\nabla}>> T_{o} M\\
@A\pi_{*}AA @AA\pi_{*}A \\
\mathfrak{m} @>>\alpha_{_\nabla}(A,-)> \mathfrak{m}.
\end{CD}
\end{equation}   
Thus, for every $A,B\in \mathfrak{m}$ we obtain
\begin{equation}\label{con}
\nabla_{A^{+}_{o}}B^{+}=[A^{+},B^{+}]_{o}-\pi_{*}(\alpha_{_\nabla}(A,B)).
\end{equation}
At every point $p=\sigma \cdot o $, taking into account the above  two diagrams (\ref{diagramaA}) and (\ref{diagramaB}),
we can write in a concise way,
\begin{equation}\label{con2}
\nabla_{A^{+}_{p}}B^{+}=[A^{+},B^{+}]_{p}-(\tau_{\sigma})_{*}\Big(\pi_*\big( \alpha_{_{\nabla}}(\mathrm{Ad}_{\sigma^{-1}}A,
\mathrm{Ad}_{\sigma^{-1}}B)\big)\Big),
\end{equation}
which allows to recover $\nabla$ from $\alpha_{_{\nabla}}$.

\vspace{0.2cm}

The torsion and curvature tensors of the $G$-invariant affine connection $\nabla$ corresponding to the bilinear map $\alpha=\alpha_{_{\nabla}}$ are also computed in \cite{Draper:teoNomizu} as follows:
\begin{equation}\label{tor}
T^{\nabla}(A,B)=\alpha (A,B)-\alpha (B,A)-[A,B]_\mathfrak{m},
\end{equation}
\begin{equation}\label{cur}
 R^{\nabla}(A,B)C=
 \alpha (A,\alpha (B, C))-\alpha (B,\alpha (A, C))-\alpha ( [A , B]_{\mathfrak{m}}, C)-[[A , B]_{\mathfrak{h}} , C],
 \end{equation}
 for any $A, B, C\in \mathfrak{m}$,
where   $[\ ,\ ]_{\mathfrak{h}}$ and   $[\ ,\ ]_{\mathfrak{m}}$ denote the composition of the bracket
($ [\mathfrak{m},\mathfrak{m}]\subset \mathfrak{g}$) with  the projections $\pi_{\mathfrak{h}}$ and $\pi_{\mathfrak{m}}$  of $\mathfrak{g}= \mathfrak {h}\oplus \mathfrak{m}$
 on each factor.

 \begin{remark}
 {\rm 
 Assume $g$ is a $G$-invariant Riemannian metric on $M$.
 Let $\alpha_{_{\nabla}}$ and $\alpha_g$ be the bilinear maps  related (by Nomizu's Theorem) to an invariant affine connection $\nabla$   and to the Levi-Civita connection $\nabla^{g}$, respectively. As a direct consequence of (\ref{con2}), we deduce that $\nabla$ and $\nabla^g$ have the same geodesics if and only if $\alpha_{_{\nabla}}-\alpha_g$ is a skew-symmetric map.}
 \end{remark}

\begin{remark}
 {\rm Note that $\alpha_C (A,B) =0$ and $\alpha_N(A,B)=\frac{1}{2}[A,B]_{\mathfrak{m}}$ correspond to invariant affine connections. They are the \emph{canonical} and the \emph{natural} connections, respectively.
 In the case of symmetric spaces, that is, when $[\mathfrak{m}, \mathfrak{m}]\subset \mathfrak{h}$, both connections are the same.}
 \end{remark}

\vspace{0.2cm}

The fact of a $G$-invariant affine connection to be compatible with a $G$-invariant Riemannian metric $g$ can also be translated to an algebraic setting. Specifically, as a particular case of \cite[Chapter X, Theorem~2.1]{Draper:KobayashiNomizu}, we have the following result. We include here a direct proof for the sake of completeness.

\begin{theorem}\label{th_eldelascompatiblesconlametrica}
Let $M=G/H$ be a reductive homogeneous space  endowed with a $G$-invariant Riemannian metric $g$ and with a fixed reductive decomposition (\ref{eq_descomposicionreductiva}).
 A $G$-invariant affine connection $\nabla$ is metric if and only if the  bilinear operation $\alpha_{_{\nabla}} $ related to $\nabla$ by Theorem~\ref{nomizu}
satisfies
\begin{equation}\label{metric}
  g  (\alpha_{_{\nabla}}(C,A),  B)  + g (A,\alpha_{_{\nabla}} (C,B))=0
\end{equation}
  for any $A,B,C \in \mathfrak{m}$, where $g\colon \mathfrak{m}\times\mathfrak{m}\to\mathfrak{m}$ denotes also the nondegenerate symmetric bilinear map induced by $g$ by means of the identification of $\mathfrak{m}$ with $T_oM$ via $\pi_{*}$.
\end{theorem}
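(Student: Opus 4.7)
The plan is to reduce the global metric-compatibility condition $\nabla g = 0$ to a single tensorial identity at the origin $o \in M$, and then to expand that identity using Nomizu's formula (\ref{con}) together with the infinitesimal invariance of $g$.

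First I would observe that $\nabla g$ is itself a $G$-invariant $(0,3)$-tensor, since both $\nabla$ and $g$ are $G$-invariant. Consequently $\nabla g \equiv 0$ on $M$ if and only if $(\nabla g)_o = 0$, and by tensoriality it is enough to test this on a basis of $T_o M$. Because $\pi_*\vert_{\mathfrak{m}}\colon\mathfrak{m}\to T_oM$ is a linear isomorphism, it suffices to verify
\[
C^+_o\bigl(g(A^+,B^+)\bigr)=g(\nabla_{C^+_o}A^+,B^+_o)+g(A^+_o,\nabla_{C^+_o}B^+)
\]
for every $A,B,C\in\mathfrak{m}$, where the fundamental vector fields $A^+,B^+,C^+$ are used to extend the tangent vectors at $o$.

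Next I would exploit that the $G$-invariance of $g$ is equivalent to $\mathcal{L}_{C^+}g=0$ for every $C\in\mathfrak{g}$, which unpacks as the identity
\[
C^+_o\bigl(g(A^+,B^+)\bigr)=g\bigl([C^+,A^+]_o,B^+_o\bigr)+g\bigl(A^+_o,[C^+,B^+]_o\bigr).
\]
Plugging Nomizu's formula $\nabla_{C^+_o}A^+=[C^+,A^+]_o-\pi_*\bigl(\alpha_{_{\nabla}}(C,A)\bigr)$ and its analogue with $A$ replaced by $B$ into the right-hand side of the compatibility equation, the two $[C^+,\cdot]_o$ contributions cancel against the Lie-derivative identity, so the compatibility is equivalent to
\[
g\bigl(\pi_*(\alpha_{_{\nabla}}(C,A)),B^+_o\bigr)+g\bigl(A^+_o,\pi_*(\alpha_{_{\nabla}}(C,B))\bigr)=0,
\]
which under the identification $\mathfrak{m}\cong T_oM$ via $\pi_*$ is exactly (\ref{metric}). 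Every step is reversible in $A,B,C$, so both implications drop out at once.

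The only non-routine point is the initial reduction from a global identity to a pointwise one at $o$; this hinges on recognising that the $G$-invariance of $\nabla$ and of $g$ forces $\nabla g$ itself to be $G$-invariant. Once that is in hand, the proof is a direct cancellation of the bracket terms coming from Nomizu's formula against those produced by $\mathcal{L}_{C^+}g=0$, and no further ingredients beyond the ones already stated in the excerpt are required.
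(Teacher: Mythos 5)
Your argument is correct and is essentially the paper's own proof: both derive the Killing identity $C^+_o\bigl(g(A^+,B^+)\bigr)=g([C^+,A^+]_o,B^+_o)+g(A^+_o,[C^+,B^+]_o)$ from the $G$-invariance of $g$ and the equivariance of the fundamental vector fields, and then cancel these bracket terms against Nomizu's formula (\ref{con}) to land exactly on (\ref{metric}). The only (minor, and in fact slightly cleaner) divergence is in the globalization step: you note that $\nabla g$ is itself a $G$-invariant tensor and therefore vanishes everywhere as soon as it vanishes at $o$, whereas the paper instead transports the compatibility identity from an arbitrary point $p=\sigma\cdot o$ back to the origin explicitly via the commutative diagram (\ref{diagramaA}) and the isometry $\tau_{\sigma^{-1}}$.
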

\begin{proof}
Since $g$ is $G$-invariant, we have from (\ref{AD}) that
 \begin{equation}
 g(  A^+_{\sigma\cdot p},  B^+_{\sigma \cdot p}) =g((\mathrm{Ad}_{\sigma ^{-1}} A)^+_p,   (\mathrm{Ad}_{\sigma ^{-1}} B)^+_p),\label{ecutres}
 \end{equation}
for all $\sigma \in G$ and $p\in M$. Recall that $\mathrm{Ad}_{\mathrm{exp}\,(C)}=\mathrm{exp}(\mathrm{ad}_{C})=\sum_{k=0}^{\infty}\frac{(\mathrm{ad}_{C})^{k}}{k!}\in \mathrm{Aut}(\mathfrak{g})$
\cite[3.46]{Warner}.
Taking into account that $A\mapsto A^+$ is an  antihomomorphism of Lie algebras, 
Equation~(\ref{ecutres}) gives
\begin{align*}
 C^+_{p}g(  A^+,  B^+)&=\frac{d}{dt}\vert_{t=0}\Big[ g \Big( A^+_{(\mathrm{exp}\,tC\cdot p)},  B^+_{(\mathrm{exp}\,tC\cdot p)}\Big)\Big]\\[1mm]
&=\frac{d}{dt}\vert_{t=0}\Big[ g \Big( (\mathrm{Ad}_{\mathrm{exp}\,(-tC)}A)^+_{p},(\mathrm{Ad}_{\mathrm{exp}\,(-tC)}B)^+_{p}\Big)\Big]\\[1mm]
&= \frac{d}{dt}\vert_{t=0}\Big[ g \Big( \Big(\sum_{k=0}^{\infty}\frac{(\mathrm{ad}_{-tC})^{k}A}{k!}\Big)^+_{p},\Big(\sum_{k=0}^{\infty}\frac{(\mathrm{ad}_{-tC})^{k}B}{k!}\Big)^+_{p}\Big)\Big]=\\[1mm]
&= -g([C,A]^{+}_{p},B^{+}_{p})-g(A^{+}_{p},[C,B]^{+}_{p})=g([C^+,A^+]_{p},B^{+}_{p})+g(A^{+}_{p},[C^+,B^+]_{p}).
\end{align*}
Thus, $\nabla$ is metric if and only if for every $A,B,C\in \mathfrak{m}$,
\begin{equation}\label{EE}
g(L^{\nabla}_{C^{+}}A^{+}, B^{+})+g(A^{+},L^{\nabla}_{C^{+}}B^{+})=0.
\end{equation}
Evaluating (\ref{EE}) at the origin $o\in M$ and taking into account that $A^{+}_{o}=\pi_{*}(A)$, Equation~(\ref{metric}) easily follows from (\ref{con}).

Conversely, let $p=\sigma \cdot o $ be an arbitrary point of $M$. Then, by applying again that $\tau_{\sigma^{-1}}$ is an isometry jointly with   (\ref{diagramaA}), (\ref{AD}) and (\ref{con}), we get
$$\begin{array}{r}\vspace{2pt}
g(L^{\nabla}_{C^{+}}A^{+}_{p}, B^{+}_{p})=
g \big((\tau_{\sigma^{-1}})_{*}(L^{\nabla}_{C^{+}}A^{+}_{p}), (\tau_{\sigma^{-1}})_{*}(B^{+}_{p})\big)=
g \big(L^{\nabla}_{(\mathrm{Ad}_{\sigma^{-1}}C)^{+}}(\tau_{\sigma^{-1}})_{*}(A^{+}_p), (\mathrm{Ad}_{\sigma^{-1}}B)^{+}_{o}\big)
\\
=g \big(L^{\nabla}_{(\mathrm{Ad}_{\sigma^{-1}}C)^{+}}(\mathrm{Ad}_{\sigma^{-1}}A)^{+}_{o}, (\mathrm{Ad}_{\sigma^{-1}}B)^{+}_{o}\big)=g\big(\alpha_{_\nabla}(\mathrm{Ad}_{\sigma^{-1}}C,\mathrm{Ad}_{\sigma^{-1}}A),\mathrm{Ad}_{\sigma^{-1}}B\big).
\end{array}$$
Finally, from (\ref{metric}) we deduce (\ref{EE}).
\end{proof}

\begin{remark}\label{nnn}
\rm{Equation (\ref{metric}) says that the $\mathfrak{h}$-invariant bilinear map  $\alpha_{_{\nabla}}\colon \mathfrak{m}\times\mathfrak{m}\to\mathfrak{m}$ is related to a $G$-invariant metric affine connection $\nabla$ whenever $\alpha_{_{\nabla}}(X,-)\in\mathfrak{so}(\mathfrak{m},g)$ for all $X\in\mathfrak{m}$. % and conversely.
Therefore, for $H$ connected, there is a bijective correspondence between the set of $G$-invariant affine connections   compatible with the metric $g$ on $M=G/H$ and the vector space $
\hom_{\mathfrak{h}}(\mathfrak{m}, \mathfrak{so}(\mathfrak{m},g)).
$
  But note now that the map
$$\begin{array}{rcl}   
\mathfrak{m}\wedge\mathfrak{m}&\to&\mathfrak{so}(\mathfrak{m},g)\\
x\wedge y&\mapsto& g(x,-)y-g(y,-)x
\end{array}
$$
is an isomorphism of $\mathfrak{h}$-modules  independently of the considered   nondegenerate symmetric  $\mathfrak{h}$-invariant bilinear map $g\colon\mathfrak{m}\times\mathfrak{m}\to\mathfrak{m}$, where as usual $\mathfrak{m}\wedge\mathfrak{m}\equiv\Lambda^2\mathfrak{m}$ denotes  the second exterior power  of the module $\mathfrak{m} $.
Hence there is a one-to-one correspondence between the set of metric $G$-invariant affine connections on $G/H$ and the vector space
$$
\hom_{\mathfrak{h}}(\mathfrak{m},\mathfrak{m}\wedge\mathfrak{m}).
$$
A remarkable consequence of this fact is that the number of free parameters used for the description of the   set of $G$-invariant affine connections   compatible with a $G$-invariant metric $g$ on the homogeneous space $M=G/H$ does not depend on the considered metric. This will allow to extend our results to Berger spheres in a forthcoming paper.   
}
\end{remark}

\vspace{0.10cm}

\section{Odd dimensional spheres}\label{sec_esferasimpares}

Our main purpose here is the study of the homogeneous space $\SS^{2n+1}=\SU(n+1)/\SU(n)$ for every $n\ge1$. In order to fix the notation and conventions used in the sequel, recall that the special unitary group $\SU(n+1)=\{\sigma\in \GL(n+1,\C):\sigma^{-1}=\overline{\sigma}^{t},\,\, \mathrm{det}(\sigma)=1\}$
 acts transitively on the left on the $(2n+1)$-dimensional unit sphere $\SS^{2n+1}\subset \C^{n+1}$ in the natural way by matrix multiplication,
 under the identification of $\C^{n+1}$ with $\R^{2n+2}$ as follows 
 $$
 (z_{1},\hdots,z_{n+1})\leftrightarrow (\mathrm{Re}(z_{1}),\mathrm{Im}(z_{1}),\hdots,\mathrm{Re}(z_{n+1}),\mathrm{Im}(z_{n+1})).
 $$
  The isotropy group for this action at $o:=(0,\hdots,0,1)\in \SS^{2n+1}$ is a closed subgroup of $\SU(n+1)$ isomorphic to
 $\SU(n)$   by identifying $\widetilde{\sigma} \in \SU(n)$ with
 $$
\left(
\begin{array}{c|c}
  \widetilde{\sigma} &0    \\ \hline
  0& 1
\end{array}
\right)\in \SU(n+1).
$$
Thus the sphere $\SS^{2n+1}$ is in a natural way diffeomorphic with the homogeneous manifold $\SU(n+1)/\SU(n)$ \cite[3.65]{Warner}.
The Lie algebra $\mathfrak {su}(n+1)$ consists of the skew-hermitian matrices of zero trace
$$
\mathfrak {su}(n+1)=\{ A\in \mathcal M _{n+1}(\mathbb C) :  A+\bar A^t=0 , \,\, \mathrm{tr}(A)=0\},
$$
and the Lie algebra of the isotropy group at $o$ is isomorphic to $\mathfrak {su}(n)$, seen as a subalgebra of $\mathfrak {su}(n+1)$ by identifying $\widetilde{A} \in \mathfrak {su}(n)$ with
 $$
\left(
\begin{array}{c|c}
  \widetilde{A} &0    \\ \hline
  0& 0
\end{array}
\right)\in \mathfrak {su}(n+1).
$$
Next, consider the vector space decomposition
\begin{equation}\label{ttt}
\mathfrak {su}(n+1)=\mathfrak {su}(n) \oplus\mathfrak{m},
\end{equation}
where
\begin{equation}\label{lam}
\mathfrak{m}=\left\{A=
\left(
\begin{array}{c|c}
 -\frac{a}{n} I_{n} &z    \\ \hline
  -\bar z^t& a
\end{array}
\right) \in \mathcal M_{n+1}(\mathbb C): z^{t}=(z_{1},...,z_{n})\in \mathbb C^n, \,\, a\in \mathbf{i}\R
\right\}.
\end{equation}
Thus $A\leftrightarrow (z,a)$ is a linear isomorphism identifying $\mathfrak{m}$ with $\C^{n}\oplus \mathbf{i}\R$.
This algebraical identification has geometrical meaning. 
The tangent vector space $T_{o}\SS^{2n+1}=\{(z_{1},...,z_{n+1})\in \C^{n+1}: \mathrm{Re}(z_{n+1})=0\} =\C^{n}\oplus \mathbf{i}\R$ and the identification under $\pi_{*}\colon \mathfrak{m}\to T_{o}\SS^{2n+1}$ is also $A\leftrightarrow (z,a)$.  
In what follows, we use this identification to describe the elements in $\mathfrak{m}$.

A direct computation shows that (\ref{ttt}) is a reductive decomposition of $\mathfrak{su}(n+1)$ and under our identification  $A\leftrightarrow (z,a)$, the action of $\mathfrak{su}(n)$ on $\mathfrak{m}$ is given by $B\cdot(z,a)=(Bz,0)$ for $B\in \mathfrak{su}(n)$. Note also
 that the corresponding projection $\pi_{\mathfrak{m}}$ on the factor $\mathfrak{m}$  is given by
\begin{equation*} 
\left(
\begin{array}{c|c}
  B & z    \\ \hline
  -\bar z^t & a
\end{array}
\right)\in \mathfrak{su}(n+1)\xrightarrow{\ \pi_{\mathfrak{m}}\ }  
\left(
\begin{array}{c|c}
 -\frac{a}{n} I_{n}  & z    \\ \hline
  -\bar z^t & a
\end{array}
\right)\in \mathfrak{m}.
\end{equation*}

\smallskip
Consider the decomposition of $\mathfrak{m}$ into a direct sum of $\mathfrak{su}(n)$-irreducible submodules $\mathfrak{m}=\mathfrak{m}_{1}\oplus \mathfrak{m}_{2}$,
 for $\mathfrak{m}_{1}:=\{A\in \mathfrak{m}:a=0\}$ and $\mathfrak{m}_{2}:=\{A\in \mathfrak{m}:z=0\}$. The module $\mathfrak{m}_{1}$ is isomorphic to the natural $\mathfrak{su}(n)$-representation $\mathbb{C}^n$
 and $\mathfrak{m}_{2}$ is a trivial (one-dimensional) module.
 For a geometric interpretation of $\mathfrak{m}_{1}$ and $\mathfrak{m}_{2}$,
recall that the Hopf map is given by $$  \SS^{2n+1}\to \C P^{n},\quad (z_{1},...,z_{n+1})\mapsto [z_{1}:... :z_{n+1}],$$ where $[z_{1}:... :z_{n+1}]\in \C P^n$ represents the complex one-dimensional subspace of $\C^{n+1}$ spanned by $(z_{1},...,z_{n+1})\in \SS^{2n+1}$. The Hopf map is a principal fibre bundle with structural group $\SS^{1}$. 
Let  
$\xi\in \mathfrak{X}(\SS^{2n+1})$ be the vector field given by
\begin{equation}\label{elcampo}
\xi_{z}=-\mathbf{i}z
\end{equation}
at any $z\in \SS^{2n+1}$.
Let $\mathcal{V}=\textrm{Span}(\xi)$ and $\mathcal{H}=\xi^{\perp}$ be the vertical and horizontal distributions for the canonical connection of the Hopf map. The corresponding connection form $\omega$ on $\SS^{2n+1}$ with values in $\mathfrak{s}_{1}=\mathbf{i}\R$ (the Lie algebra of $\SS^1$) satisfies
$
\omega(X)=-\mathbf{i}\,g(X,\xi)
$
for all $X\in \mathfrak{X}(\SS^{2n+1})$.
Now the map $\pi_{*}\colon \mathfrak{m}\to T_{o}\SS^{2n+1} $ allows us to identify $\pi_{*}(\mathfrak{m}_{1})=\mathcal{H}(o)$ and $\pi_{*}(\mathfrak{m}_{2})=\mathcal{V}(o)$. Consistently with the above properties, $\mathfrak{m}_{1}$ and $\mathfrak{m}_{2}$ are called the horizontal and the vertical parts of $\mathfrak{m}$, respectively.
\smallskip

Let us consider $\SS^{2n+1}$ equipped with the canonical Riemannian metric $g$ of constant sectional curvature $1$. The metric $g$ is $\SU(n+1)$-invariant. 
Requiring $\pi_{*}\colon \mathfrak{m}\to T_{o}\SS^{2n+1}$ to be an isometry, $\mathfrak{m}$ is endowed with the inner product (also denoted by $g$) given by
$$
g((z,a),(w,b))=\mathrm{Re}(z^{t}\overline{w})-ab
$$
for any $z,w\in \C^n$ and $a,b\in \bf{i}\R$.\smallskip

In order to derive explicit expressions of the $\SU(n+1)$-invariant affine connections, we summarize several facts on the Sasakian  structure on $\SS^{2n+1}$ (see details in  \cite{Blair}).  For any vector field $X\in \mathfrak{X}(\SS^{2n+1})$, the decomposition of $\mathbf{i}  X$ in tangent and normal components determines the $(1,1)$-tensor field $\psi$ and the $1$-differential form $\eta$ on $\SS^{2n+1}$ such that
\begin{equation}\label{sasaki}
\mathbf{i}  X=\psi(X)+\eta(X)N,
\end{equation}
where $N$ is the unit outward normal vector field to $\SS^{2n+1}$.
Thus, if we denote by $\nabla^g$ the Levi-Civita connection of $g$, the following properties hold
\begin{equation}\label{eq_propiedadesSasakiana}
\begin{array}{ll}
\eta(X)=g(X,\xi),\qquad  &g(\psi(X),\psi(Y))=g(X,Y)-\eta(X)\eta(Y),
\\
\psi^2=-\mathrm{Id}+\eta \otimes \xi,\quad\qquad&(\nabla^{g}_{X}\psi)Y=g(X,Y)\xi-\eta(Y)X,
\end{array}
\end{equation}
for any $X,Y\in \mathfrak{X}(\SS^{2n+1})$. These conditions imply several further relations, for instance we also have that $\eta \circ \psi=0$ and $\psi(\xi)=0$. The Sasakian form is the $2$-differential form $\Phi$ defined by
$\Phi(X,Y)=g(X, \psi(Y))$. Moreover, the vector field $\xi$ is Killing (i.e., $g(\nabla^{g}_{X}\xi,Y )+g(\nabla^{g}_{Y}\xi,X )=0$)
and therefore $\nabla^{g}_{X}\xi=-\psi(X)$ and $2\Phi =d\eta$.

\begin{lemma}\label{blanco}
The Sasakian  structure on $\SS^{2n+1}$ is $\SU(n+1)$-invariant in the following sense. For every $\sigma\in \SU(n+1)$ and $X\in \mathfrak{X}(\SS^{2n+1})$,
$$
\xi=\tau_{\sigma}(\xi),\quad \eta(X)\circ \tau_{\sigma^{-1}}=\eta(\tau_{\sigma}(X)),\quad \tau_{\sigma}(\psi(X))=\psi(\tau_{\sigma}(X)), \quad (\tau_{\sigma})^{*}(\Phi)=\Phi.
$$
\end{lemma}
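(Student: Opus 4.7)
The guiding observation is that each $\sigma\in\SU(n+1)$ acts on the ambient space $\C^{n+1}\cong\R^{2n+2}$ as a $\C$-linear Euclidean isometry, and that $\tau_\sigma$ is merely the restriction of this linear map to $\SS^{2n+1}$. Consequently the differential $(\tau_\sigma)_*$ at any point $p\in\SS^{2n+1}$ coincides, after identifying tangent vectors with vectors of the ambient space in the usual way, with $\sigma$ itself; in particular it commutes with multiplication by $\mathbf{i}$ and preserves the Euclidean metric. Moreover, since the outward unit normal is $N_p=p$, we have $\sigma(N_{\sigma^{-1}\cdot p})=N_p$ for every $p$.

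From these remarks, identity (1) is immediate: $(\tau_\sigma(\xi))_p=\sigma(\xi_{\sigma^{-1}\cdot p})=\sigma(-\mathbf{i}\sigma^{-1}\cdot p)=-\mathbf{i}\,p=\xi_p$, where the third equality uses $\sigma\circ\mathbf{i}=\mathbf{i}\circ\sigma$. Identity (2) then follows by combining (1) with the facts that $\tau_\sigma$ is a $g$-isometry and $\eta(\cdot)=g(\cdot,\xi)$: at each $p$, $\eta(\tau_\sigma(X))_p=g(\sigma X_{\sigma^{-1}\cdot p},\xi_p)=g(\sigma X_{\sigma^{-1}\cdot p},\sigma\xi_{\sigma^{-1}\cdot p})=g(X,\xi)_{\sigma^{-1}\cdot p}$.

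For (3) I would apply $(\tau_\sigma)_*=\sigma$ to the defining decomposition (\ref{sasaki}) evaluated at $q=\sigma^{-1}\cdot p$. Using $\sigma\circ\mathbf{i}=\mathbf{i}\circ\sigma$ and $\sigma(N_q)=N_p$, the result is $\mathbf{i}(\tau_\sigma(X))_p=(\tau_\sigma(\psi(X)))_p+\eta(X)_q\,N_p$. Since $\sigma$ carries $T_q\SS^{2n+1}$ into $T_p\SS^{2n+1}$, the first summand is tangent to $\SS^{2n+1}$ at $p$ and the second is normal. By the uniqueness of the tangent/normal splitting encoded in (\ref{sasaki}), comparison with the Sasaki decomposition of the vector field $\tau_\sigma(X)$ at $p$ forces the tangent parts to agree, which is precisely (3) (and, as a byproduct, reproves (2) from the normal parts).

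Finally, (4) is a direct pullback computation: for $v,w\in T_p\SS^{2n+1}$,
$((\tau_\sigma)^*\Phi)_p(v,w)=g_{\sigma\cdot p}(\sigma v,\psi(\sigma w))=g_{\sigma\cdot p}(\sigma v,\sigma\psi(w))=g_p(v,\psi(w))=\Phi_p(v,w)$,
where the second equality uses (3) in its pointwise form $\psi_{\sigma\cdot p}\circ\sigma=\sigma\circ\psi_p$, and the third uses that $\sigma$ is an isometry. The entire argument is essentially bookkeeping; the only step that warrants genuine care is the treatment of the ambient normal vector field in (3), which is what guarantees that pushing a Sasaki decomposition at $\sigma^{-1}\cdot p$ through $\sigma$ produces a decomposition of the same form at $p$.
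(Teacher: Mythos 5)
Your proof is correct, and for the third identity it takes a genuinely different route from the paper. The paper proves $\tau_{\sigma}(\psi(X))=\psi(\tau_{\sigma}(X))$ by invoking the derived Sasakian relation $\psi(X)=-\nabla^{g}_{X}\xi$ together with the $\SU(n+1)$-invariance of the Levi-Civita connection and the already-established invariance of $\xi$; you instead push the defining ambient decomposition $\mathbf{i}X=\psi(X)+\eta(X)N$ through the linear isometry $\sigma$, using $\sigma\circ\mathbf{i}=\mathbf{i}\circ\sigma$, $\sigma(N_{\sigma^{-1}\cdot p})=N_p$, and uniqueness of the tangent/normal splitting. Your argument is the more elementary one: it works straight from the definition (\ref{sasaki}) and does not rely on the Killing property of $\xi$ or on the invariance of $\nabla^{g}$, and it yields the second identity as a byproduct of the normal components. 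The paper's argument is shorter on the page but leans on more machinery; yours makes transparent that the whole lemma is a consequence of $\tau_\sigma$ being the restriction of a $\C$-linear ambient isometry fixing the normal field. The remaining items (first, second, fourth) are handled essentially as in the paper.
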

\begin{proof}
The first assertion is a direct computation. The second one is   consequence of the expression $\eta=g(-,\xi)$ and from the fact that the maps $\tau_\sigma $ are isometries for $g$. Now taking into account that   the Levi-Civita connection $\nabla^{g}$ is $\SU(n+1)$-invariant, we get
$$
\tau_{\sigma}(\psi(X))=-\tau_{\sigma}(\nabla^{g}_{X}\xi)=-\nabla^{g}_{\tau_{\sigma}(X)}\tau_{\sigma}(\xi)=-\nabla^{g}_{\tau_{\sigma}(X)}\xi=\psi(\tau_{\sigma}(X)).
$$
Finally, for every $X,Y\in \mathfrak{X}(\SS^{2n+1})$ and $p\in \SS^{2n+1}$, we have
$$
\begin{array}{ll}\vspace{3pt}
(\tau_{\sigma})^{*}(\Phi)(X_{p},Y_{p})&=g\big((\tau_{\sigma})_{*}(X_{p}), \psi((\tau_{\sigma})_{*}(Y_{p}))\big)=g\big((\tau_{\sigma})_{*}(X_{p}), \psi((\tau_{\sigma})(Y)_{\sigma\cdot p})\big)
\\
 &=g\big((\tau_{\sigma})_{*}(X_{p}), \tau_{\sigma}(\psi (Y))_{\sigma\cdot p}\big)=g\big((\tau_{\sigma})_{*}(X_{p}), (\tau_{\sigma})_{*}(\psi (Y)_{ p})\big)=\Phi(X_{p},Y_{p}).
\end{array}
$$
\end{proof}

  %%%%%%%%%%%%%%%%%%%%%%%%%%%%%%%%%%%%%%%%%%%%%%%%%%%%%
  %%%%%%%%%%%%%%%%%%%%%%%%%%%%%%%%%%%%%%%%%%%%%%%%%%%%%%

\section{Invariant affine connections on $\SS^{2n+1}$  for $n\geq 4$ }

In the low-dimensional cases, the number of $\SU(n+1)$-invariant affine connections on the sphere $\SS^{2n+1}$ depends on $n$. In fact,   we will show that the behavior of the cases $n\geq 4$ and     $\SS^{3}$, $\SS^{5}$ and $\SS^{7}$ is quite different. For this reason, we study  separately each of these.

\subsection{Invariant metric affine connections on $\SS^{2n+1}$}

From now on,  $\mathfrak{m}$ will always be  given by Equation~(\ref{lam}).
We would like to compute the number of (independent) parameters involved in the description of the invariant affine connections. This is a purely algebraic computation: as recalled in Theorem~\ref{nomizu}, the number of parameters coincides with $\mathrm{dim}_{\R}(\mathrm{Hom}_{\mathfrak{su}(n)}(\mathfrak{m}\otimes\mathfrak{m},\mathfrak{m}))$.
Standard arguments of representation theory allow to compute such dimension from the complexification. This forces us to recall some facts about   representations (consult \cite{RepresentacionesReales} for more information).

First, keep in mind that $\mathfrak{m}_1^\mathbb{C}=\mathfrak{m}_1\otimes_{\mathbb{R}}\mathbb{C}$ is a (completely reducible) module for
$\mathfrak{su}(n)^\mathbb{C}=\mathfrak{su}(n)\oplus  {\bf i}\mathfrak{su}(n)=\mathfrak{sl}(n,\mathbb{C})$ under the action $(x+{\bf i}y)(u\otimes(s+{\bf i} t))=(xs-yt)u\otimes1+(xt+ys)u\otimes{\bf i}$ for $x,y\in\mathfrak{su}(n)$, $u\in\mathfrak{m}_1$ and $s,t\in\mathbb{R}$.
Its decomposition as a sum of
   $\mathfrak{sl}(n,\mathbb{C})$-irreducible modules is $V\oplus V^*$, for $V=\mathbb{C}^n$ the $\mathfrak{sl}(n,\mathbb{C})$-natural module and $V^*$ its dual one.
   Indeed, $\mathfrak{m}_1=\mathbb{C}^n$, and $\mathfrak{m}_1^\mathbb{C}=\mathcal{U}_1\oplus\mathcal{U}_2$ is the sum of the $\mathfrak{sl}(n,\mathbb{C})$-modules
   $\mathcal{U}_1:=\{u\otimes 1+{\bf i}u\otimes {\bf i}: u\in\mathfrak{m}_1\}$ and
   $\mathcal{U}_2:=\{u\otimes 1-{\bf i}u\otimes {\bf i}: u\in\mathfrak{m}_1\}$. The map $\mathcal{U}_2\to \mathbb{C}^n$ given by $u\otimes 1-{\bf i}u\otimes {\bf i}\mapsto u$ is an isomorphism  of $\mathfrak{sl}(n,\mathbb{C})$-modules, as well as the map
   $\mathcal{U}_1\to (\mathbb{C}^n)^*$ given by $u\otimes 1+{\bf i}u\otimes {\bf i}\mapsto h(-,u)$,
   for $h\colon \mathbb{C}^n\times\mathbb{C}^n\to\mathbb{C}$, $h(u,v)=u^t\bar v$ the usual Hermitian product.

\begin{lemma}\label{Cris}
For every $n\geq 4$, we have  
$$
\mathrm{dim}_{\R}(\mathrm{Hom}_{\mathfrak{su}(n)}(\mathfrak{m}\otimes\mathfrak{m},\mathfrak{m}))=7.
$$
\end{lemma}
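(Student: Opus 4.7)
The natural approach is to pass to the complexification. For any real $\mathfrak{su}(n)$-module $W$, one has the standard identification
$$\hom_{\mathfrak{su}(n)}(W,W')\otimes_{\mathbb R}\mathbb C\;\cong\;\hom_{\mathfrak{sl}(n,\mathbb C)}(W^{\mathbb C},(W')^{\mathbb C}),$$
so that the real dimension of $\hom_{\mathfrak{su}(n)}(\mathfrak{m}\otimes\mathfrak{m},\mathfrak{m})$ equals the complex dimension of $\hom_{\mathfrak{sl}(n,\mathbb C)}(\mathfrak{m}^{\mathbb C}\otimes\mathfrak{m}^{\mathbb C},\mathfrak{m}^{\mathbb C})$. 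Thus the whole problem reduces to a multiplicity count inside a tensor product of $\mathfrak{sl}(n,\mathbb C)$-modules, which by Schur's lemma is just the sum of the multiplicities with which each irreducible summand of $\mathfrak{m}^{\mathbb C}$ appears in $\mathfrak{m}^{\mathbb C}\otimes\mathfrak{m}^{\mathbb C}$.

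By the discussion preceding the lemma, $\mathfrak{m}_1^{\mathbb C}\cong V\oplus V^{*}$ with $V=\mathbb C^n$ the natural $\mathfrak{sl}(n,\mathbb C)$-module, and $\mathfrak{m}_2^{\mathbb C}\cong\mathbb C$ is the trivial module. Therefore
$$\mathfrak{m}^{\mathbb C}\;\cong\;V\oplus V^{*}\oplus\mathbb C,$$
and I would expand $\mathfrak{m}^{\mathbb C}\otimes\mathfrak{m}^{\mathbb C}$ into its nine summands and decompose each one using the classical formulas $V\otimes V=\Lambda^2V\oplus S^2V$, $V^{*}\otimes V^{*}=\Lambda^2V^{*}\oplus S^2V^{*}$, and $V\otimes V^{*}=\mathfrak{sl}(n,\mathbb C)\oplus\mathbb C$ (and similarly $V^{*}\otimes V$).

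Next I count, in this decomposition, how many times each of $V$, $V^{*}$ and $\mathbb C$ appears. The trivial module $\mathbb C$ shows up once in $V\otimes V^{*}$, once in $V^{*}\otimes V$ and once in $\mathbb C\otimes\mathbb C$, giving $3$ copies. The natural module $V$ appears in $V\otimes\mathbb C$ and in $\mathbb C\otimes V$, giving $2$ copies; by symmetry $V^{*}$ appears $2$ times. Adding these contributions one obtains $3+2+2=7$, which is the claimed dimension.

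The one point that must be justified carefully, and which is the reason the statement assumes $n\ge 4$, is that none of the other irreducible summands arising in the decomposition, namely $\Lambda^2V$, $S^2V$, $\Lambda^2V^{*}$, $S^2V^{*}$ and $\mathfrak{sl}(n,\mathbb C)$, is isomorphic to $V$, $V^{*}$ or the trivial module. A quick dimension check settles it: $\binom{n}{2}$, $\binom{n+1}{2}$ and $n^2-1$ are all strictly greater than $n$ for $n\ge 4$ and none of them equals $1$. The critical borderline is exactly $n=3$, where $\Lambda^2V\cong V^{*}$ and $\Lambda^2V^{*}\cong V$, which would add extra copies and increase the count; this explains why the low-dimensional spheres $\SS^7$, $\SS^5$ and $\SS^3$ must be treated separately in subsequent sections.
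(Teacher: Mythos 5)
Your proposal is correct and follows essentially the same route as the paper: complexify, write $\mathfrak{m}^{\mathbb C}\cong V\oplus V^{*}\oplus\mathbb C$, decompose $\mathfrak{m}^{\mathbb C}\otimes\mathfrak{m}^{\mathbb C}$ via $V\otimes V\cong S^2V\oplus\Lambda^2V$ and $V\otimes V^{*}\cong\mathfrak{sl}(V)\oplus\mathbb C$, and count the $2+2+3=7$ copies of $V$, $V^{*}$ and $\mathbb C$. Your explicit dimension check ruling out isomorphisms with $S^2V$, $\Lambda^2V$ and $\mathfrak{sl}(V)$ for $n\ge4$ makes precise what the paper only asserts, and your remark that $\Lambda^2V\cong V^{*}$ at $n=3$ matches exactly the phenomenon exploited later for $\SS^7$.
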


\begin{proof}
The required dimension coincides with
  $
  \dim_{\mathbb{C}}\hom_{\mathfrak{su}(n)^{\mathbb{C}}}(\mathfrak{m}^{\mathbb{C}}
\otimes\mathfrak{m}^{\mathbb{C}},
\mathfrak{m}^{\mathbb{C}}),
 $
 where  $\mathfrak{m}^{\mathbb{C}}=\mathfrak{m}\otimes_{\mathbb{R}}\mathbb{C}$ is the complexified module for $\mathfrak{su}(n)^\mathbb{C}=\mathfrak{su}(n)\oplus  {\bf i}\,\mathfrak{su}(n)=\mathfrak{sl}(n,\mathbb{C})$, which is a simple Lie algebra of type $A_{n-1}$. The decomposition of $\mathfrak{m}^{\mathbb{C}}$ into the direct sum of  irreducible submodules comes from
  $\mathfrak{m}_1^\mathbb{C}\cong V\oplus V^*$
  for $V$ the natural $\mathfrak{sl}(n,\mathbb{C})$-module $\mathbb{C}^n$ and $V^*$ its dual,
  jointly with the obvious fact that
    $\mathfrak{m}_2^\mathbb{C}\cong \mathbb{C}$ is a trivial module.
     In order to decompose the tensor product, note that $V\otimes V\cong S^2V\oplus \Lambda^2V$ (denoting respectively the second symmetric and exterior power) and also that $V\otimes V^*\cong \hom(V,V)= \mathfrak{sl}(V)\oplus\mathbb{C}\id_V$. Hence, the decomposition of $\mathfrak{m}^{\mathbb{C}}
\otimes\mathfrak{m}^{\mathbb{C}}$ as a sum of $\mathfrak{sl}(n,\mathbb{C})$-irreducible representations is
\begin{equation}\label{eq_decomposiciondeltensor}
(V\oplus V^*\oplus \mathbb{C})^{\otimes2}\cong S^2V\oplus  \Lambda^2V\oplus S^2 V^* \oplus  \Lambda^2V^*\oplus 2\mathfrak{sl}(V)\oplus 2V\oplus 2V^*\oplus 3\mathbb{C}.
\end{equation}    
Taking into account that neither $S^2V$, nor $\Lambda^2V$,    nor the adjoint module $\mathfrak{sl}(V)$
are isomorphic to $V$ or $V^*$ (under our  assumptions on $n$), we conclude that the only copies of $V$, $V^*$ or $\mathbb{C}$ in the decomposition~(\ref{eq_decomposiciondeltensor}) are the seven last modules, and the result holds.
 \end{proof}

 \begin{remark}\label{eq_enterminosdepesosftales}
  {\rm In terms of the fundamental weights $\lambda_i$'s (notations as in \cite{Draper:Humphreysalg}),
  $\mathfrak{m}^\mathbb{C}\cong V(\lambda_1)\oplus V(\lambda_{n-1})\oplus V(0)$
  and
  $\mathfrak{m}^\mathbb{C}\otimes\mathfrak{m}^\mathbb{C}\cong V(2\lambda_{1})\oplus V(\lambda_{2})\oplus V(2\lambda_{n-1})\oplus V(\lambda_{n-2})\oplus 2V(\lambda_{1}+\lambda_{n-1})\oplus  2V(\lambda_{1})\oplus 2V(\lambda_{n-1})\oplus 3V(0)$ for all $n\ge3$.}
\end{remark}

Let $\Gamma_n$ be the vector space of $\R$-bilinear maps $\alpha\colon \mathfrak{m}\times\mathfrak{m}\to\mathfrak{m}$ such that $\mathrm{ad}( \mathfrak{su}(n))\subset\mathfrak{der}(\mathfrak{m},\alpha)$.

\begin{proposition}\label{base} For $n\geq 4$, an $\R$-bilinear map $\alpha \in \Gamma_n$ if and only if there exist $q_{1},q_{2},q_{3}\in \mathbb{C}$ and $t\in \mathbb{R}$ such that
\begin{equation}\label{eq_losalfascasoSU}
\alpha((z,a),(w,b))=\left( q_{1}bz+q_{2}aw,\, \mathbf{i}  \left(tab+\mathrm{Im}(q_{3}\overline{z}^tw)\right)\right),
\end{equation}
for all $(z,a),(w,b)\in \mathfrak{m}$.
\end{proposition}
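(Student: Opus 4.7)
The plan is to exploit the dimension count from Lemma~\ref{Cris}: since $\dim_{\R}\Gamma_n=7$ and the formula~(\ref{eq_losalfascasoSU}) depends on exactly seven real parameters (six from $q_1,q_2,q_3\in\C$ and one from $t\in\R$), it suffices to prove that every bilinear map of the form~(\ref{eq_losalfascasoSU}) belongs to $\Gamma_n$ and that the resulting seven-parameter family is linearly independent. The proposition will then follow from the elementary fact that seven linearly independent vectors in a seven-dimensional space form a basis.

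The first step is to verify the invariance condition~(\ref{casoconexo}) directly, using that $B\in\mathfrak{su}(n)$ acts on $\mathfrak{m}$ via $B\cdot(z,a)=(Bz,0)$. The $\mathfrak{m}_1$-component $q_{1}bz+q_{2}aw$ of~(\ref{eq_losalfascasoSU}) is immediately invariant, because $B$ is $\C$-linear and the scalars $a,b$ sit in the trivial submodule $\mathfrak{m}_2$: both sides of~(\ref{casoconexo}) collapse to $q_{1}b\,Bz+q_{2}a\,Bw$. The scalar term $\mathbf{i}tab$ in the $\mathfrak{m}_2$-part is trivially invariant, since it involves only $\mathfrak{m}_2$. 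The one delicate summand, and the main obstacle I would flag, is $\mathbf{i}\,\mathrm{Im}(q_{3}\bar z^t w)$, whose invariance reduces to verifying that
\[
\mathrm{Im}\bigl(q_{3}(\overline{Bz}^{\,t} w+\bar z^t Bw)\bigr)=0
\]
for every $B\in\mathfrak{su}(n)$. This vanishes identically thanks to the identity $\overline{Bz}^{\,t}=-\bar z^{\,t} B$, which is precisely the skew-Hermitian condition $\bar B^{t}=-B$ defining $\mathfrak{su}(n)$. It is this step that genuinely uses that $B$ is skew-Hermitian rather than merely $\C$-linear, and it also explains the appearance of $\mathrm{Im}$ (rather than $\mathrm{Re}$) in the formula.

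Linear independence of the seven real parameters is then an easy exercise by evaluating the expression on suitable test vectors in $\mathfrak{m}$, namely elements concentrated in a single coordinate $e_{j}\in\C^{n}$ or in the purely imaginary direction $\mathbf{i}\in\mathbf{i}\R$, which successively isolate $q_{1}$, $q_{2}$, the real and imaginary parts of $q_{3}$, and $t$. Combined with the invariance check and the equality $\dim_{\R}\Gamma_{n}=7$ from Lemma~\ref{Cris}, the family~(\ref{eq_losalfascasoSU}) exhausts $\Gamma_{n}$, and the proposition follows.
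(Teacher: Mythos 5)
Your proposal is correct and follows essentially the same route as the paper: the paper exhibits the seven maps $\alpha_{1},\alpha_{\bf i},\beta_{1},\beta_{\bf i},\gamma_{1},\gamma_{\bf i},\delta$ (whose general real linear combination is exactly formula~(\ref{eq_losalfascasoSU})), checks their $\mathfrak{su}(n)$-invariance by direct computation, and concludes from their linear independence together with the dimension count of Lemma~\ref{Cris}. Your only (welcome) addition is to spell out the one nontrivial invariance check, namely that $\overline{Bz}^{\,t}w+\bar z^{\,t}Bw=0$ because $B$ is skew-Hermitian, which the paper leaves implicit.
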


\begin{proof}
A direct computation reveals that the following maps are $\mathfrak{su}(n)$-invariant:
\begin{equation}\label{eq_labase}
\begin{array}{ll}
\alpha_{1}((z,a),(w,b))=(bz,0), &\alpha_{\bf{i}}((z,a),(w,b))=({\bf i}\,bz,0),
\\
\beta_{1}((z,a),(w,b))=(aw,0), &\beta_{\bf{i}}((z,a),(w,b))=({\bf i}\,aw,0),
\\
\gamma_{1}((z,a),(w,b))=(0,{\bf i}\,\mathrm{Im}(\overline{z}^t w)), &\gamma_{\bf{i}}((z,a),(w,b))=(0,{\bf i}\,\mathrm{Im}({\bf i}\,\overline{z}^t w))=(0,{\bf i}\,\mathrm{Re}(\overline{z}^t w)),
\\
\delta((z,a),(w,b))=(0,{\bf i}ab).
\end{array}
\end{equation}
As these maps are linearly independent, Lemma~\ref{Cris} implies that they constitute a basis of $\Gamma_n$ if $n\ge4$ (and a basis of some  subspace of $\Gamma_n$ if $n\le3$).
\end{proof}

\begin{proposition}\label{pr_mesa}
An invariant affine connection $\nabla$ on $\SS^{2n+1}$ ($n\geq 4$) is metric if and only if there are $q\in \C$ and $t\in \R$ such that the corresponding $\R$-bilinear map $\alpha_{_{\nabla}}\in \Gamma_n$ satisfies
\begin{equation}\label{mesa}
\alpha_{_\nabla}=\mathrm{Re}(q)(\alpha_{1}-\gamma_{1})+\mathrm{Im}(q)(\alpha_{\bf{i}}+\gamma_{\bf{i}})+t\beta_{1}.
\end{equation}
\end{proposition}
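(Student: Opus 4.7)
The plan is to apply Theorem~\ref{th_eldelascompatiblesconlametrica} and characterize those $\alpha \in \Gamma_n$ given by Proposition~\ref{base} which satisfy the compatibility condition
$$g(\alpha_{_{\nabla}}(C,A),B) + g(A,\alpha_{_{\nabla}}(C,B)) = 0$$
for all $A,B,C \in \mathfrak{m}$. I write a general $\alpha \in \Gamma_n$ in the form (\ref{eq_losalfascasoSU}) with parameters $q_1,q_2,q_3 \in \C$ and a real number $t_\delta$ (the one denoted $t$ in Proposition~\ref{base}; I rename it to avoid clashing with the $t$ of the statement). Setting $A = (z,a)$, $B = (w,b)$, $C = (u,c)$ with $a = \mathbf{i}\alpha$, $b = \mathbf{i}\beta$, $c = \mathbf{i}\gamma$ for $\alpha,\beta,\gamma \in \R$, substituting into the inner product $g((z,a),(w,b)) = \mathrm{Re}(z^t\bar w) - ab$ turns the metric equation into an identity between real polynomials in $\alpha,\beta,\gamma$ and bilinear expressions in $z,w,u \in \C^n$ that must hold for all choices of arguments.

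The constraints on $q_1,q_2,q_3,t_\delta$ are extracted by strategic specializations of the arguments. Three particular cases suffice:

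\emph{(i)} Taking $z = w = u = 0$, only the $\delta$-block survives, and a short calculation shows the equation collapses to $-2\,t_\delta\,\alpha\beta\gamma = 0$, forcing $t_\delta = 0$.

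\emph{(ii)} Taking $a = c = 0$ (so $A,C$ purely horizontal) and $b$ arbitrary, the surviving terms produce $\beta\,\mathrm{Im}\bigl((q_3+\bar{q_1})\,\bar u^t z\bigr) = 0$ for all $\beta,z,u$; the nondegeneracy of the Hermitian form on $\C^n$ yields $q_3 = -\bar{q_1}$.

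\emph{(iii)} Taking $a = b = 0$ and $c$ arbitrary, the surviving terms reduce to $-2\gamma\,\mathrm{Im}(q_2)\,\mathrm{Re}(z^t\bar w) = 0$, which forces $\mathrm{Im}(q_2) = 0$, i.e.\ $q_2 \in \R$.

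Setting $q := q_1 \in \C$ and $t := q_2 \in \R$, the three conditions identify $\alpha_{_{\nabla}}$ with the combination in (\ref{mesa}): the $q_1 = q$ piece contributes $\mathrm{Re}(q)\,\alpha_1 + \mathrm{Im}(q)\,\alpha_{\mathbf{i}}$, the condition $q_3 = -\bar q$ contributes $-\mathrm{Re}(q)\,\gamma_1 + \mathrm{Im}(q)\,\gamma_{\mathbf{i}}$, and $q_2 = t \in \R$ contributes $t\,\beta_1$. Converse sufficiency is immediate: substituting~(\ref{mesa}) into the compatibility condition yields the same cancellations that produced the constraints, so each of the three summands is seen, one at a time, to be skew for $g$ in the second and third slots. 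The only potential pitfall is bookkeeping the real and imaginary parts of scalars in $\C$ against vectors in $\C^n$; once the parametrization $a=\mathbf{i}\alpha$ etc. is adopted, the computation is purely routine and no conceptual obstacle remains.
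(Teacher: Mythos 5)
Your proposal is correct and follows essentially the same route as the paper: invoke Theorem~\ref{th_eldelascompatiblesconlametrica} together with the parametrization of $\Gamma_n$ from Proposition~\ref{base}, and extract the conditions $t_\delta=\mathrm{Im}(q_2)=0$, $q_3=-\overline{q_1}$ from the skewness requirement $g(\alpha(C,A),B)+g(A,\alpha(C,B))=0$. The paper merely asserts the outcome of this computation in one line, whereas you carry it out explicitly (and your specializations and resulting identities check out).
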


\begin{proof}
A map  $\alpha$ as in (\ref{eq_losalfascasoSU}) belongs to $\mathfrak{so}(\mathfrak{m},g)$ when $t=\mathrm{Im}(q_{2})=0$ and $q_{3}=-\overline{q_{1}}$, so the result holds.
\end{proof}

\begin{remark}\label{conexionnatural}
{\rm The natural connection $\nabla^N$ is determined by $\alpha_{N}=\frac{1}{2}[\,\,,\,\,]_{\mathfrak{m}}$. Thus, for every $(z,a),(w,b)\in \mathfrak{m}$,
$$
\alpha_{N}((z,a),(w,b))=\frac12\left(\Big(\frac{n+1}{n}\Big)(bz-aw), \overline{w}^tz-\overline{z}^tw\right).
$$
Therefore, the natural connection is achieved for $q_{1}=-q_{2}=\frac{n+1}{2n}$, $t=0$ and $q_{3}=-1$ in
(\ref {eq_losalfascasoSU}). In particular $\nabla^{N}$ is not metric, as we expected since   $\SS^{2n+1}=\SU(n+1)/\SU(n)$ is not a naturally reductive homogeneous space for $n\ne1$.}
\end{remark}

\noindent Now, taking into account (\ref{tor}) and (\ref{mesa}), we can compute the torsion of the metric $\SU(n+1)$-invariant affine connections on $\SS^{2n+1}$.
\begin{corollary}\label{torsionn} For $n\geq 4$, the torsion $T^{_{\nabla}}$ of the $\SU(n+1)$-invariant metric affine connection $\nabla$ corresponding with the $\R$-bilinear map $\alpha_{_{\nabla}}\in \Gamma_n$ in {\rm (\ref{mesa})} is characterized by
$$
T^{_{\nabla}}((z,a),(w,b))=\left(\Big(q-t-\frac{n+1}{n}\Big)(bz-aw), (\mathrm{Re}(q)-1)(\overline{w}^tz-\overline{z}^tw)\right),
$$
for any $(z,a),(w,b)\in \mathfrak{m}$.
\end{corollary}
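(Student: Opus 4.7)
The plan is to apply the general Nomizu torsion formula (\ref{tor}), namely $T^\nabla(A,B)=\alpha_\nabla(A,B)-\alpha_\nabla(B,A)-[A,B]_\mathfrak{m}$, to the explicit $\alpha_\nabla$ in (\ref{mesa}). Since everything is linear in the parameters $q$ and $t$, the computation reduces to two independent pieces: the antisymmetrization of $\alpha_\nabla$ and the projected bracket $[\,,\,]_\mathfrak{m}$.

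First I would collapse the seven basis elements of (\ref{eq_labase}) appearing in (\ref{mesa}) into a single closed expression. Grouping the $\alpha_1,\alpha_{\mathbf{i}}$ terms produces a horizontal contribution $q\,bz$, the $\beta_1$ term contributes $taw$, and checking real and imaginary parts shows that the $\gamma_1,\gamma_{\mathbf{i}}$ combination equals $\mathbf{i}\,\mathrm{Im}(-\bar q\,\bar z^t w)$. Thus
\begin{equation*}
\alpha_\nabla((z,a),(w,b))=\bigl(qbz+taw,\ \mathbf{i}\,\mathrm{Im}(-\bar q\,\bar z^t w)\bigr).
\end{equation*}
Antisymmetrizing in the two arguments yields a horizontal part equal to $(q-t)(bz-aw)$ and a vertical part $\mathbf{i}\,\mathrm{Im}\bigl(-\bar q\,(\bar z^t w-\bar w^t z)\bigr)$.

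Next I would compute $[A,B]_\mathfrak{m}$ directly from the block form (\ref{lam}). A straightforward block matrix multiplication of
$A=\begin{pmatrix}-\frac{a}{n}I_n & z\\ -\bar z^t & a\end{pmatrix}$ and $B=\begin{pmatrix}-\frac{b}{n}I_n & w\\ -\bar w^t & b\end{pmatrix}$
shows that the upper-right block of $[A,B]$ is $\tfrac{n+1}{n}(bz-aw)$ and the lower-right entry is $\bar w^t z-\bar z^t w$; the latter lies in $\mathbf{i}\R$ because $\overline{\bar z^t w}=\bar w^t z$. Applying the projection $\pi_\mathfrak{m}$ (which just reads off the off-diagonal entries and the $(n+1,n+1)$ scalar) then gives
\begin{equation*}
[A,B]_\mathfrak{m}=\Bigl(\tfrac{n+1}{n}(bz-aw),\ \bar w^t z-\bar z^t w\Bigr).
\end{equation*}

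The conclusion is then a matter of subtraction. The horizontal component of $T^\nabla((z,a),(w,b))$ becomes $\bigl(q-t-\tfrac{n+1}{n}\bigr)(bz-aw)$ immediately. For the vertical component, the identity $\bar z^t w-\bar w^t z=2\mathbf{i}\,\mathrm{Im}(\bar z^t w)$ lets me simplify $\mathbf{i}\,\mathrm{Im}(-\bar q\cdot 2\mathbf{i}\,\mathrm{Im}(\bar z^t w))=-2\mathbf{i}\,\mathrm{Re}(q)\,\mathrm{Im}(\bar z^t w)$, and subtracting $\bar w^t z-\bar z^t w=-2\mathbf{i}\,\mathrm{Im}(\bar z^t w)$ produces $(\mathrm{Re}(q)-1)(\bar w^t z-\bar z^t w)$, matching the stated formula. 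The whole argument is essentially a direct algebraic verification; there is no genuine obstacle, only the bookkeeping of the $\C$-versus-$\R$ identifications of the scalar slot of $\mathfrak{m}$, which is why I would be careful to keep $\bar w^t z-\bar z^t w$ and $2\mathbf{i}\,\mathrm{Im}(\bar z^t w)$ clearly matched throughout.
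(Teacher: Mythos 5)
Your proposal is correct and follows the same route as the paper: the paper's proof likewise just invokes the Nomizu torsion formula (\ref{tor}) together with the identity $[\,\,,\,\,]_{\mathfrak{m}}=\frac{n+1}{n}(\alpha_{1}-\beta_{1})-2\gamma_{1}$ and calls the rest straightforward. You have simply carried out in full the bookkeeping (closed form of $\alpha_{_\nabla}$, block computation of $[A,B]_{\mathfrak{m}}$, and the $\bar w^t z-\bar z^t w=-2\mathbf{i}\,\mathrm{Im}(\bar z^t w)$ matching) that the paper leaves implicit, and all of it checks out.
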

\begin{proof} Let us first recall that $[\,\,,\,\,]_{\mathfrak{m}}=\frac{n+1}{n}(\alpha_{1}-\beta_{1})-2\gamma_{1}$ and so the proof is straightforward.
\end{proof}

\begin{remark}\label{LV}
{\rm Note that also the torsion $T^\nabla$ belongs to $\Gamma_n$, so that it can be expressed in terms of the basis in (\ref{eq_labase}). Namely,
$$
T^{\nabla}=\left(\textrm{Re}(q)-t-\frac{n+1}{n}\right)(\alpha_{1}-\beta_{1})+\textrm{Im}(q)(\alpha_{{\bf i}}-\beta_{{\bf i}})-2(\textrm{Re}(q)-1)\gamma_{1}.
$$
In particular, the Levi-Civita connection of $\mathbb{S}^{2n+1}$ is got by taking $q=1$ and $t=-1/n$ in (\ref{mesa}). Thus,
$
\alpha_g=\alpha_{1}-\gamma_{1}-\frac{1}{n}\beta_{1}.$
}
\end{remark}

\vspace{0.2cm}
The following result shows how the Sasakian structure on $\SS^{2n+1}$ provides the main tools in order to give explicit formulas  for the invariant metric affine connections. There is no confusion to use the same letter to denote the bilinear maps on $\mathfrak{m}$ and the  corresponding ones under $\pi_{*}$ on $T_{o}\SS^{2n+1}$.

\begin{lemma}\label{cocina}
The bilinear maps on $T_{o}\SS^{2n+1}$ corresponding with $\alpha_{1},\alpha_{\bf{i}},\beta_{1},\gamma_{1}$ and $\gamma_{\bf{i}}$ under the identification $\pi_{*}\colon \mathfrak{m}\to T_{o}\SS^{2n+1}$  are respectively  given by
$$
\alpha_{1}(x,y)=-\eta(y)\psi(x),\qquad\alpha_{\bf{i}}(x,y)=\eta(y)(x-\eta(x)\xi_{o}),\qquad
\beta_{1}(x,y)=-\eta(x)\psi(y),
$$
$$
\gamma_{1}(x,y)=\Phi(x,y)\, \xi_{o},\qquad \gamma_{\bf{i}}(x,y)=-g(\psi(x),\psi(y))\,\xi_{o},
$$
for all $x,y \in T_{o}\SS^{2n+1} $.
\end{lemma}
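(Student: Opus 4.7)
The plan is to reduce all five identities to direct algebraic checks in $\mathfrak{m}\cong \C^n\oplus\mathbf{i}\R$, once the Sasakian tensors have been evaluated explicitly at the origin. Since both sides of each identity are tensorial, it suffices to verify them pointwise at $o$, and there we only need to know how $\xi$, $\eta$ and $\psi$ act on $T_o\SS^{2n+1}$.

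First, I would unwind the definitions at $o$. The vector $\xi_o=-\mathbf{i}\,o$ corresponds, under the splitting $T_o\SS^{2n+1}=\C^n\oplus\mathbf{i}\R$, to the pair $(0,-\mathbf{i})$; the outward unit normal is $N_o=o$, whose last coordinate is $1\in\R$. For $x=(z,a)\in T_o\SS^{2n+1}$, the inner product formula in $\mathfrak{m}$ immediately gives $\eta(x)=g(x,\xi_o)=\mathbf{i}a$. Splitting $\mathbf{i}x=(\mathbf{i}z,\mathbf{i}a)$ into its tangential and normal components using $\mathbf{i}a\in\R$, the identity $\mathbf{i}X=\psi(X)+\eta(X)N$ of (\ref{sasaki}) yields
\[
\psi(x)=(\mathbf{i}z,0).
\]
These three formulas, together with the definition $\Phi(x,y)=g(x,\psi(y))$, are all I will need.

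Next, I would check each of the five identities by a one-line computation, in each case writing $x=(z,a)$, $y=(w,b)$. For instance, $-\eta(y)\psi(x)=-(\mathbf{i}b)(\mathbf{i}z,0)=(bz,0)=\alpha_1(x,y)$; similarly, $\eta(y)(x-\eta(x)\xi_o)=(\mathbf{i}b)\bigl((z,a)-\mathbf{i}a(0,-\mathbf{i})\bigr)=(\mathbf{i}b)(z,0)=\alpha_{\mathbf{i}}(x,y)$, and the identity for $\beta_1$ is analogous with the roles of $x$ and $y$ swapped. For the two formulas involving $\xi_o$ I would compute $\Phi(x,y)=g\bigl((z,a),(\mathbf{i}w,0)\bigr)=\mathrm{Re}(-\mathbf{i}\,z^t\overline{w})=\mathrm{Im}(z^t\overline{w})=-\mathrm{Im}(\overline{z}^t w)$, so that
\[
\Phi(x,y)\xi_o=-\mathrm{Im}(\overline{z}^t w)(0,-\mathbf{i})=\bigl(0,\,\mathbf{i}\,\mathrm{Im}(\overline{z}^t w)\bigr)=\gamma_1(x,y),
\]
and analogously $-g(\psi(x),\psi(y))\xi_o=-\mathrm{Re}(z^t\overline{w})(0,-\mathbf{i})=(0,\mathbf{i}\,\mathrm{Re}(\overline{z}^t w))=\gamma_{\mathbf{i}}(x,y)$.

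There is no real obstacle here; the only point requiring some care is keeping track of signs and of complex conjugations when rewriting $\mathrm{Re}(z^t\overline{w})$ and $\mathrm{Im}(z^t\overline{w})$ in terms of $\overline{z}^t w$, and of the sign coming from $\xi_o=(0,-\mathbf{i})$. Everything else is bookkeeping on the explicit formulas for the basis elements of $\Gamma_n$ listed in~(\ref{eq_labase}).
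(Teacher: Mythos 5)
Your proposal is correct and follows essentially the same route as the paper: a direct pointwise computation at $o$ after making the Sasakian data $\xi_o=(0,-\mathbf{i})$, $\eta(x)=\mathbf{i}a$, $\psi(x)=(\mathbf{i}z,0)$ explicit under the identification $\mathfrak{m}\cong\C^n\oplus\mathbf{i}\R$. The only difference is presentational — you stay in complex coordinates and check all five identities, whereas the paper unwinds everything into real coordinates of $\R^{2n+2}$ and writes out only the $\gamma_1$ case in detail — and your sign bookkeeping (in particular $\mathrm{Im}(z^t\overline{w})=-\mathrm{Im}(\overline{z}^tw)$) is accurate.
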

\begin{proof} Let us consider $x,y\in T_{o}\SS^{2n+1}\subset \R^{2n+2}$   given by $x=(x_{1},y_{1},...,x_{n},y_{n},0,s)$ and $y=(u_{1},v_{1},...,u_{n},v_{n},0,t)$, respectively.   %\margen{nose puede mensionar algo de la identificaci\'{o}n?}
The element $A\leftrightarrow (z,a)\in \mathfrak{m}$ corresponding to $x$ satisfies
$z_{j}=x_{j}+\mathbf{i}y_{j}$ for $j=1,...,n$ and $a=\mathbf{i}s$ and analogously for $y$. We give the proof only for $\gamma_{1}$. Similar computations can be applied to the other cases. First, it is easy to check that
$
\gamma_{1}(x,y)=\big(0,...,0,\sum_{j=1}^{n} (x_{j}v_{j}-y_{j}u_{j})\big).
$
In the same manner we can see that
$
\Phi(x,y)\, \xi_{o}=g(x,\psi(y))\,\xi_{o}=\big(0,...,0,-g(x,\psi(y))\big)\in \R^{2n+2}.
$ The proof is completed from $\psi(y)=(-v_{1},u_{1},...,-v_{n},u_{n},0,0)$.
\end{proof}

\begin{theorem}\label{principal4}
For every $\SU(n+1)$-invariant metric affine connection $\nabla$ on $\SS^{2n+1}$ with $n\geq 4$, there are $q\in\C$ and $t\in \R$ such that
$$
\nabla_{X}Y=\nabla^{g}_{X}Y+(\mathrm{Re}(q)-1)(\Phi(X,Y)\,\xi+\eta(Y)\psi(X))+
\mathrm{Im}(q)(\nabla^{g}_{X}\psi)(Y)+(t+1/n)\eta(X)\psi(Y),
$$
for all $X,Y \in \mathfrak{X}(\SS^{2n+1})$.
Moreover, $\nabla$ has totally skew-symmetric torsion if and only if there is $ r \in \R$ such that
\begin{equation}\label{skew}
\nabla_{X}Y=\nabla^{g}_{X}Y+ r \left(\Phi(X,Y)\,\xi-\eta(X)\psi(Y)+\eta(Y)\psi(X)\right).
\end{equation}
\end{theorem}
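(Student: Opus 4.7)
The plan is to convert the algebraic parametrization supplied by Proposition~\ref{pr_mesa} into a geometric identity via Lemma~\ref{cocina} and the Sasakian relations. From Proposition~\ref{pr_mesa} together with Remark~\ref{LV}, writing $\alpha_g=\alpha_{1}-\gamma_{1}-\frac{1}{n}\beta_{1}$, I obtain
\begin{equation*}
\alpha_{_\nabla}-\alpha_g=(\mathrm{Re}(q)-1)(\alpha_{1}-\gamma_{1})+\mathrm{Im}(q)(\alpha_{\bf i}+\gamma_{\bf i})+(t+\tfrac{1}{n})\beta_{1}.
\end{equation*}
Subtracting the identity (\ref{con}) for $\nabla$ and $\nabla^g$, the difference tensor $\mathcal{D}:=\nabla-\nabla^g$ satisfies $\mathcal{D}_{A^+_o}B^+=-\pi_*\bigl((\alpha_{_\nabla}-\alpha_g)(A,B)\bigr)$ at the origin.

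I would then compute, using Lemma~\ref{cocina} and the Sasakian identities (\ref{eq_propiedadesSasakiana}), the geometric counterparts of the relevant basis elements:
\begin{align*}
-(\alpha_{1}-\gamma_{1})(x,y) &= \Phi(x,y)\xi_o+\eta(y)\psi(x),\\
-(\alpha_{\bf i}+\gamma_{\bf i})(x,y) &= g(x,y)\xi_o-\eta(y)x=(\nabla^g_x\psi)y,\\
-\beta_{1}(x,y) &= \eta(x)\psi(y),
\end{align*}
where the second line exploits $g(\psi(x),\psi(y))=g(x,y)-\eta(x)\eta(y)$. Substituting these into the formula for $\mathcal{D}$ yields exactly the claimed expression at $o$. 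Since both sides are $\SU(n+1)$-invariant $(1,2)$-tensor fields — invariance of $\xi$, $\eta$, $\psi$, $\Phi$ comes from Lemma~\ref{blanco}, and that of $\nabla^g\psi$ follows because $\nabla^g$ is invariant — transitivity of the action propagates the identity to all of $\SS^{2n+1}$.

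For the second assertion, I would invoke the equivalence, recalled in Section~2, between total skew-symmetry of the torsion and skew-symmetry of $\mathcal{D}$. Symmetrizing the formula just obtained and applying $\Phi(X,Y)+\Phi(Y,X)=0$ together with $(\nabla^g_X\psi)Y+(\nabla^g_Y\psi)X=2g(X,Y)\xi-\eta(X)Y-\eta(Y)X$, one arrives at
\begin{equation*}
\mathcal{D}(X,Y)+\mathcal{D}(Y,X)=\bigl(\mathrm{Re}(q)-1+t+\tfrac{1}{n}\bigr)\bigl(\eta(X)\psi(Y)+\eta(Y)\psi(X)\bigr)+\mathrm{Im}(q)\bigl(2g(X,Y)\xi-\eta(X)Y-\eta(Y)X\bigr).
\end{equation*}
Evaluating on $Y=\xi$ and on a nonzero $X\in\xi^\perp$, the vectors $X$ and $\psi(X)$ are linearly independent, forcing $\mathrm{Im}(q)=0$ and $\mathrm{Re}(q)-1+t+\frac{1}{n}=0$; conversely these conditions make the symmetric part vanish identically. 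Setting $r:=\mathrm{Re}(q)-1$, so that $t+\frac{1}{n}=-r$, substitution into the first part produces exactly (\ref{skew}). The only delicate point is the bookkeeping of signs coming from (\ref{con}) and of the identification $\pi_*\colon\mathfrak{m}\to T_o\SS^{2n+1}$ when translating the algebraic basis of $\Gamma_n$ into Sasakian tensors; once this is settled at the origin, global invariance handles the rest almost for free.
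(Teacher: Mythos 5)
Your proposal is correct and follows essentially the same route as the paper: it passes from the algebraic parametrization of Proposition~\ref{pr_mesa} to the difference tensor $\mathcal{D}=\nabla-\nabla^g$ via (\ref{con}), translates the basis maps through Lemma~\ref{cocina} and the Sasakian identities (\ref{eq_propiedadesSasakiana}), extends by invariance using Lemma~\ref{blanco}, and characterizes skew-torsion by the skew-symmetry of $\mathcal{D}$. Your explicit symmetrization and evaluation at $Y=\xi$ merely spells out the step the paper summarizes as "this clearly forces $\mathrm{Im}(q)=0$ and $\mathrm{Re}(q)=1-t-1/n$", and your sign bookkeeping is consistent with the paper's conventions.
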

\begin{proof} Recall that the difference tensor between $\nabla$ and the Levi-Civita connection $\nabla^g$ is defined by  $\mathcal{D}=\nabla-\nabla^{g}=L^{g}-L^{\nabla}$ where $L^{g}$ and $L^{\nabla}$ denote the Nomizu operators (\ref{OpNo}) of $\nabla^g$ and $\nabla$, respectively. Since $\nabla^{g}$ is $\SU(n+1)$-invariant, $\nabla$ is $\SU(n+1)$-invariant if and only if the difference tensor $\mathcal{D}$ is also $\SU(n+1)$-invariant. According to the expression of $\alpha_{_{\nabla}}$ in (\ref{mesa}), and by using Remark~\ref{LV} and Lemma~\ref{cocina},  we get that, for every $x,y\in T_{o}\SS^{2n+1}$,
$$
\begin{array}{rl}
\mathcal{D}(x,y)&=L^{g}(x,y)-L^{\nabla}(x,y)=\alpha_{g}(x,y)-\alpha_{_{\nabla}}(x,y)
\\
&=(\mathrm{Re}(q)-1)(\Phi(x,y)\,\xi_{o}+\eta(y)\psi(x))+
\mathrm{Im}(q)(\nabla^{g}_{x}\psi)(y)+(t+1/n)\eta(x)\psi(y),
\end{array}
$$
where we have extensively used the properties (\ref{eq_propiedadesSasakiana}) of the Sasakian structure on $\SS^{2n+1}$. Taking into account that $\mathcal{D}$ is $\SU(n+1)$-invariant, Lemma \ref{blanco} ends the proof of the first assertion.

For the second one, notice that the connection  $\nabla$ has totally skew-symmetric torsion if and only if the difference tensor $\mathcal{D}$ is skew-symmetric. This clearly forces $\mathrm{Im}(q)=0$ and $\mathrm{Re}(q)=1-t-1/n$. The proof is completed by taking $ r =-t-1/n$.
\end{proof}

\begin{remark}\label{111}
{\rm The torsion tensors of the metric affine connections with totally skew-symmetric torsion in (\ref{skew}) are given by
$$
T^{\nabla}(X,Y)=2 r \big(\Phi(X,Y)\,\xi-\eta(X)\psi(Y)+\eta(Y)\psi(X)\big).
$$
and the related 3-differential form given in (\ref{tos})  coincides, in this case, with
$$
w_{_\nabla}=\frac12 r \,\eta\wedge d\eta.
$$
This form allows to recover the torsion and then, the connection.}
\end{remark}

\begin{example}\label{222}
{\rm At this point, we would like to write down the expressions   of   several metric affine connections on the sphere  $\SS^{2n+1}$.
\begin{itemize}
\item[i)] The \emph{canonical} connection $\nabla^C$ on $\SS^{2n+1}=\SU(n+1)/\SU(n)$ corresponds to $\alpha_C=0$ and therefore,
$$
\nabla^{C}_{X}Y=\nabla^{g}_{X}Y- \Phi(X,Y)\xi-\eta(Y)\psi(X) +\frac1n\eta(X)\psi(Y)
$$
for any $X,Y \in \mathfrak{X}(\SS^{2n+1})$.
\item[ii)]
The generalized \emph{Tanaka}   
metric connection is defined on the class of contact metric manifolds by the formula
$
\nabla^{*}_{X}Y=\nabla^{g}_{X}Y+\eta(X)\psi(Y)-\eta(Y)\nabla^{g}_{X}\xi+(\nabla^{g}_{X}\eta)(Y)\xi,
$
which satisfies $\nabla^{*}\xi=0$ \cite{Blair}.
Since $\SS^{2n+1}$ is a Sasaki manifold, $\nabla^{g}_{X}\eta=\Phi(X,-)$ and the Tanaka connection reduces to
$$
\nabla^{*}_{X}Y=\nabla^{g}_{X}Y+\eta(X)\psi(Y)+\eta(Y)\psi(X)+\Phi(X,Y)\xi.
$$
Theorem \ref{principal4} shows that $\nabla^{*}$ has not totally skew-symmetric torsion.
\item[iii)]
Recall that  every Sasakian manifold $(M^{2n+1},g,\xi,\eta,\psi)$ admits a unique metric connection $\nabla^c$ with totally skew-symmetric torsion and preserving the Sasakian structure, that is, $\nabla^c\xi=\nabla^c\psi=0$. Furthermore, $g(\nabla^c_{X}Y,Z)=g(\nabla^{g}_{X}Y,Z)+(\eta \wedge \Phi) (X,Y,Z)$ for every $X,Y,Z \in \mathfrak{X}(\SS^{2n+1})$ \cite[Theorem~7.1]{FriIva}. This metric affine connection $\nabla^c$ is called the \emph{characteristic} connection. If we particularize to the case of $\SS^{2n+1}$, it is a simple matter to check that the metric affine connection $\nabla^c$ which preserves the Sasakian structure is achieved in (\ref{skew}) by taking $ r =1$.  (See also \cite{otroTanaka} on adapted connections on metric contact manifolds.)
In order to be used in the final table, we will write 
$
T^{c}(X,Y)=2 (\Phi(X,Y)\,\xi-\eta(X)\psi(Y)+\eta(Y)\psi(X) ) 
$
for the torsion tensor of the characteristic connection.
\end{itemize}}
\end{example}

\subsection{$\nabla$-Einstein manifolds}

The following notion   has recently  been  introduced in \cite{AgriFerr}. A Riemann-Cartan manifold $(M,g,\nabla)$
is said to be Einstein with skew-torsion or just $\nabla$\emph{-Einstein} if the metric affine connection $\nabla$ has totally skew-symmetric torsion and satisfies Equation~(\ref{einstein}):
\begin{equation*} 
\mathrm{Sym}(\mathrm{Ric}^{\nabla})=\frac{s^{\nabla}}{\dim M}\, g,
\end{equation*}
where, following \cite{Draper:surveyagricola}, $\mathrm{Sym}(\mathrm{Ric}^{\nabla}) $ denotes the symmetric part of the Ricci curvature tensor of $\nabla$.   % and $s^{\nabla}$ the scalar curvature   of $\nabla$.
 We will also say that the connection \emph{satisfies the $\nabla$-Einstein equation}.
If  $\omega_{_{\nabla}}$ is the $3$-differential form defined in  (\ref{tos}),  $(M,g, \nabla)$ will be called \emph{Einstein with parallel skew-torsion} if in addition $\nabla \omega_{_{\nabla}}=0$ holds.  
Recall that the classical Einstein metrics for a compact   manifold are the critical points of a variational problem on the total scalar curvature \cite[Chapter~4]{Besse}. In a similar way, the metric connections with skew-torsion  such that $(M,g,\nabla)$ is  $\nabla$-Einstein   are the critical points  of a variational problem which involves the scalar curvature of the Levi-Civita connection $\nabla^g$ and the torsion of $\nabla$ (see details in \cite{AgriFerr}).

Let $S\in \mathcal{T}_{0,2}(M)$ be the tensor given at $p\in M$ by
$$
S(X,Y)_p:=\sum_{j=1}^{n}g(T^{\nabla}(e_{j},X_p),T^{\nabla}(e_{j},Y_p)),
$$
where $\{e_{1},...,e_{n}\}$ is an orthonormal basis of $T_pM$ and $X,Y\in \mathfrak{X}(M)$.   Then, recall the following curvature identities \cite[Appendix]{Draper:surveyagricola},
\begin{equation}\label{formulicas}
\mathrm{Sym}(\mathrm{Ric}^{\nabla})=\mathrm{Ric}^{g}-\frac{1}{4}S, \qquad\qquad s^{\nabla}=s^{g}-\frac{3}{2}\|T^{\nabla}\|^2,
\end{equation}
where $\|T^{\nabla}\|^2:=\frac16\sum_{i,j=1}^{n}g(T^{\nabla}(e_{i},e_{j}),T^{\nabla}(e_{i},e_{j}))$.

\begin{corollary}\label{333}
Let $\nabla$ be a $\SU(n+1)$-invariant metric affine connection with totally skew-symmetric torsion given in {\rm (\ref{skew})} on $\SS^{2n+1}$. Then, the symmetric part of the Ricci tensor $\mathrm{Ric}^{\nabla}$ and the scalar curvature $s^{\nabla}$ are given by
$$
\begin{array}{l}\vspace{2pt}
\mathrm{Sym}(\mathrm{Ric}^{\nabla})=2(n- r ^2)\, g-2(n-1) r ^2 \,\eta \otimes \eta, \\
   s^{\nabla}=2n(2n+1)-6n r ^2.
\end{array}
$$
In particular, $(\SS^{2n+1},g,\nabla)$ is not $\nabla$-Einstein for any $\SU(n+1)$-invariant metric affine connection whenever $n\geq 4$ unless $\nabla=\nabla^g$.
\end{corollary}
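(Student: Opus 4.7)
The plan is to feed the explicit skew-symmetric torsion from Theorem~\ref{principal4} into the universal identities~(\ref{formulicas}). This reduces the proof to computing the tensor $S$ and the norm $\|T^\nabla\|^2$ for the torsion
\[
T^{\nabla}(X,Y)=2r\bigl(\Phi(X,Y)\,\xi-\eta(X)\psi(Y)+\eta(Y)\psi(X)\bigr).
\]
Since $(\SS^{2n+1},g)$ has constant sectional curvature $1$, I would start by recording $\mathrm{Ric}^g=2n\,g$ and $s^g=2n(2n+1)$. Both sides of the claimed identity for $\mathrm{Sym}(\mathrm{Ric}^\nabla)$ are $\SU(n+1)$-invariant $(0,2)$-tensors, so by homogeneity it suffices to work at the origin $o$, where I would fix an orthonormal basis $\{e_1,\ldots,e_{2n},e_{2n+1}=\xi_o\}$ of $T_o\SS^{2n+1}$ with $e_1,\ldots,e_{2n}\in\mathcal{H}_o=\xi_o^\perp$.

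Next I would compute $S(X,Y)=\sum_j g(T^{\nabla}(e_j,X),T^{\nabla}(e_j,Y))$ by splitting the sum into the horizontal range $j\le 2n$ and the single vertical index $j=2n+1$. For horizontal $e_j$ the relation $\eta(e_j)=0$ kills the middle summand of $T^\nabla(e_j,X)$; then $g(\xi,\psi(e_j))=\Phi(e_j,\xi)=0$ and $|\psi(e_j)|^2=1$ separate the two surviving pieces, and the completeness identity $\sum_{j=1}^{2n}g(e_j,\psi(X))g(e_j,\psi(Y))=g(\psi(X),\psi(Y))$ collapses one of them thanks to $\psi(X),\psi(Y)\in\mathcal{H}_o$. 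For the vertical index, $\psi(\xi)=0$ and $\Phi(\xi,\cdot)=0$ give $T^\nabla(\xi,X)=-2r\psi(X)$, producing another copy of $g(\psi(X),\psi(Y))=g(X,Y)-\eta(X)\eta(Y)$. Adding the contributions I expect to obtain
\[
S(X,Y)=8r^{2}\,g(X,Y)+8(n-1)r^{2}\,\eta(X)\eta(Y),
\]
and then $\mathrm{Sym}(\mathrm{Ric}^\nabla)=\mathrm{Ric}^g-\tfrac14 S$ yields the first claimed formula. Tracing $S$ with the same basis produces $\|T^\nabla\|^{2}=\tfrac16\mathrm{tr}(S)=4nr^{2}$, whence $s^\nabla=s^g-\tfrac32\|T^\nabla\|^{2}=2n(2n+1)-6nr^{2}$.

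For the $\nabla$-Einstein conclusion, the tensor $\mathrm{Sym}(\mathrm{Ric}^\nabla)$ is a scalar multiple of $g$ if and only if the coefficient $-2(n-1)r^{2}$ of $\eta\otimes\eta$ vanishes; for $n\ge 4$ this forces $r=0$, which by Theorem~\ref{principal4} is precisely the Levi-Civita connection. Since the notion of $\nabla$-Einstein is defined only for connections with skew-symmetric torsion and Theorem~\ref{principal4} exhausts all $\SU(n+1)$-invariant metric connections with such torsion when $n\ge 4$, this handles every case. The main obstacle I foresee is bookkeeping: one must verify that the cross terms between $\xi$ and the $\psi(e_j)$ drop out and apply $|\psi(Z)|^{2}=|Z|^{2}-\eta(Z)^{2}$ consistently, after which the sum collapses into the two invariant $(0,2)$-tensors $g$ and $\eta\otimes\eta$ and the coefficients read off directly.
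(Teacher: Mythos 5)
Your proposal is correct and follows essentially the same route as the paper: compute $S$ and $\|T^\nabla\|^2$ in an adapted orthonormal frame with $e_{2n+1}=\xi$, plug into the identities~(\ref{formulicas}) using $\mathrm{Ric}^g=2n\,g$ and $s^g=2n(2n+1)$, and observe that proportionality to $g$ forces the $\eta\otimes\eta$ coefficient to vanish, hence $r=0$ for $n\ge 4$. The coefficients $S=8r^2g+8(n-1)r^2\,\eta\otimes\eta$ and $\|T^\nabla\|^2=4nr^2$ match the paper's, so nothing is missing.
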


\begin{proof}
First we compute the tensor $S$   from the expression of $T^\nabla$ given in Remark \ref{111}.
Take $p\in \SS^{2n+1} $ and $\{e_{1},...,e_{2n+1}\}$ any orthonormal basis of $T_p\SS^{2n+1}$ such that $e_{2n+1}=\xi_p$.
For any tangent vectors $x,y\in T_p\SS^{2n+1}$, we have
$$
\begin{array}{rl}\vspace{2pt}
S(x,y)&=    4 r ^2\sum_{j=1}^{2n+1}\big(\Phi(e_{j},x)\Phi(e_{j},y)+(\eta(e_{j}))^2g(\psi(x),\psi(y))\\
& \quad  -\eta(e_{j})\eta(x)g(\psi(e_{j}),\psi(y))-\eta(e_{j})\eta(y)g(\psi(e_{j}),\psi(x))
+\eta(x)\eta(y)g(\psi(e_{j}),\psi(e_{j}))\big).
\end{array}
$$
From the identities $\eta(e_j)=0$  and $\psi(e_j)=e_j$ if $j\ne 2n+1$, $\eta(e_{2n+1})=1$  and $\psi(e_{2n+1})=0$, we get
$$
\begin{array}{rl}\vspace{2pt}
S(x,y)&=   4 r ^2\sum_{j=1}^{2n+1}\big(\Phi(e_{j},x)\Phi(e_{j},y)+(\eta(e_{j}))^2g(\psi(x),\psi(y))
+\eta(x)\eta(y)g(\psi(e_{j}),\psi(e_{j}))\big)
\\
&= 8 r ^2\big(g(\psi(x),\psi(y))+n\,  \eta(x)\eta(y)\big)=8 r ^2\big(g(x,y)+(n-1)\eta(x)\eta(y)\big).
\end{array}
$$
Hence $S=8 r ^2g+8 r ^2(n-1)\eta\otimes\eta$ holds.  A similar computation applies to obtain $\|T^{\nabla}\|^2=4n r ^2$. The announced formulas for $\mathrm{Sym}(\mathrm{Ric}^{\nabla})$ and $s^{\nabla}$ are now deduced from (\ref{formulicas}), because
$\mathrm{Ric}^{g}=2n    g$ and $s^{g}=2n(2n+1)$.

Then Equation~(\ref{einstein}) holds if and only if either $ r =0$ or $\frac{n-1}{2n+1}g=(n-1)\eta\otimes\eta$. The second possibility is ruled out when  $n\ne1$. 
Finally, the choice   $ r =0$  corresponds with the Levi-Civita connection $\nabla^{g}$.
\end{proof}

\begin{remark}
{\rm As  was mentioned in Example~\ref{222}.\,iii), every odd dimensional sphere $\SS^{2n+1}$ admits a characteristic connection $\nabla^c$, achieved for $r=1$ in  (\ref{skew}).
We have $\mathrm{Ric}^{\nabla^c}(\xi,\xi)= 0$, since the characteristic connection satisfies $\nabla^c\xi=0$. Despite of that, $(\SS^{2n+1},g,\nabla^c)$ is not $\nabla^c$-Einstein as Corollary~\ref{333} shows (compare with \cite[Lemma~2.23]{AgriFerr}).}    
\end{remark}

%%%%%%%%%%%%%%%%%%%%%%%%%%%%%%%%%%%%%%%%%%%%%%%%%%%%%%%%%%%%%%%%%%%%%%%%%%%%%%
%%%%%%%%%%%%%%%%%%%%%%%%%%%%%%%%%%%%%%%%%%%%%%%%%%%%%%%%%%%%%%%%%%%%%%%%%%%%%%%

\section{Invariant connections on $\SS^{7}$  }

For spheres of low dimension, the affine connections described in Theorem \ref{principal4} remain  invariant, but the existence of certain particular invariant tensors provides    new invariant connections, some of them with relevant properties. For $\SS^7$, these tensors come from the 3-Sasakian structure described below.

\subsection{Invariant metric connections on $\SS^{7}$}

\begin{lemma}\label{CrisS7}
$
\mathrm{dim}_{\R} \mathrm{Hom}_{\mathfrak{su}(3)}(\mathfrak{m}\otimes\mathfrak{m},\mathfrak{m}) =9.
$
\end{lemma}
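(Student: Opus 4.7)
The strategy is to mimic the proof of Lemma~\ref{Cris}, but paying careful attention to the coincidences that occur in the $\mathfrak{sl}(3,\mathbb{C})$-representation theory which were absent when $n\ge 4$. The key point is that the problem reduces to a routine multiplicity count, once the right dimensional coincidence is spotted.

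First, as in the proof of Lemma~\ref{Cris}, I would pass to the complexification, using
$$
\dim_{\mathbb{R}}\hom_{\mathfrak{su}(3)}(\mathfrak{m}\otimes\mathfrak{m},\mathfrak{m})
= \dim_{\mathbb{C}}\hom_{\mathfrak{sl}(3,\mathbb{C})}(\mathfrak{m}^{\mathbb{C}}\otimes\mathfrak{m}^{\mathbb{C}},\mathfrak{m}^{\mathbb{C}}),
$$
with $\mathfrak{m}^{\mathbb{C}}\cong V\oplus V^{*}\oplus \mathbb{C}$ where $V=\mathbb{C}^{3}$ is the natural $\mathfrak{sl}(3,\mathbb{C})$-module. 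By Schur's lemma applied to $\mathfrak{sl}(3,\mathbb{C})$ (which is simple, so its finite-dimensional modules are completely reducible), the required dimension equals the sum, over the irreducible summands $V(\lambda)$ of the target $\mathfrak{m}^{\mathbb{C}}$, of the multiplicity of $V(\lambda)$ in $\mathfrak{m}^{\mathbb{C}}\otimes \mathfrak{m}^{\mathbb{C}}$. Thus I only need to track the multiplicities of $V$, $V^{*}$ and the trivial module $\mathbb{C}$ inside the tensor square.

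Next, I would decompose the nine pieces of $(V\oplus V^{*}\oplus\mathbb{C})^{\otimes 2}$. The isotypic components $V\otimes\mathbb{C}$, $\mathbb{C}\otimes V$, $V^{*}\otimes\mathbb{C}$, $\mathbb{C}\otimes V^{*}$, $\mathbb{C}\otimes\mathbb{C}$ contribute obviously; the pieces $V\otimes V^{*}$ and $V^{*}\otimes V$ each split as $\mathfrak{sl}(V)\oplus\mathbb{C}$; and $V\otimes V = S^{2}V\oplus\Lambda^{2}V$, $V^{*}\otimes V^{*} = S^{2}V^{*}\oplus\Lambda^{2}V^{*}$. The crucial observation — which is exactly the point where the argument diverges from the $n\ge 4$ case — is the accidental isomorphism
$$
\Lambda^{2}V\;\cong\; V^{*},\qquad \Lambda^{2}V^{*}\;\cong\; V,
$$
valid only in dimension three (one uses the $\mathfrak{sl}(3,\mathbb{C})$-invariant volume form, i.e.\ $u\wedge v\mapsto \det(-,u,v)$). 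In the notation of Remark~\ref{eq_enterminosdepesosftales}, for $n=3$ one has $V(\lambda_{n-2})=V(\lambda_{1})=V$ and $V(\lambda_{2})=V(\lambda_{n-1})=V^{*}$, so the formula stated there for general $n\ge 3$ becomes
$$
\mathfrak{m}^{\mathbb{C}}\otimes\mathfrak{m}^{\mathbb{C}}\cong V(2\lambda_{1})\oplus V(2\lambda_{2})\oplus 2\,\mathfrak{sl}(V)\oplus 3V\oplus 3V^{*}\oplus 3\,\mathbb{C}.
$$

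Finally, I would read off the answer. The submodules $V(2\lambda_{1})$, $V(2\lambda_{2})$ and $\mathfrak{sl}(V)=V(\lambda_{1}+\lambda_{2})$ contribute nothing since they are not summands of $\mathfrak{m}^{\mathbb{C}}$, while the $3$ copies each of $V$, $V^{*}$ and $\mathbb{C}$ produce, by Schur,
$$
\dim_{\mathbb{C}}\hom_{\mathfrak{sl}(3,\mathbb{C})}\bigl(\mathfrak{m}^{\mathbb{C}}\otimes\mathfrak{m}^{\mathbb{C}},\,\mathfrak{m}^{\mathbb{C}}\bigr) = 3+3+3 = 9,
$$
which gives the required equality. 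There is no real obstacle in this argument; the only subtle point — and the reason $\mathbb{S}^{7}$ behaves differently from $\mathbb{S}^{2n+1}$ for $n\ge 4$ — is precisely the extra pair of invariants coming from the isomorphisms $\Lambda^{2}V\cong V^{*}$ and $\Lambda^{2}V^{*}\cong V$, which raise the dimension from $7$ to $9$ and, at the level of bilinear maps on $\mathfrak{m}$, should correspond to two further operations in the basis of Proposition~\ref{base} that are going to be used to exhibit the additional invariant connections on $\mathbb{S}^{7}$.
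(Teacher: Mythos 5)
Your argument is correct and is essentially the paper's own proof: both pass to the complexification, use the decomposition of $(V\oplus V^{*}\oplus\mathbb{C})^{\otimes 2}$ from the $n\ge 4$ case, and then invoke the accidental isomorphisms $\Lambda^{2}V\cong V^{*}$ and $\Lambda^{2}V^{*}\cong V$ (via the invariant volume form) to raise the multiplicity count of $V$, $V^{*}$ and $\mathbb{C}$ from $7$ to $9$. Your closing remark correctly identifies these two extra invariants as the source of the maps $\varepsilon_{1}$, $\varepsilon_{\bf i}$ used later for $\SS^{7}$.
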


\begin{proof}
As in the proof of Lemma~\ref{Cris},    Equation~(\ref{eq_decomposiciondeltensor}) gives the decomposition of $\mathfrak{m}^{\mathbb{C}}
\otimes\mathfrak{m}^{\mathbb{C}}$ as a sum of irreducible $\mathfrak{sl}(3,\mathbb{C})$-representations.
The point is that, for $V$ the natural $\mathfrak{sl}(3,\mathbb{C})$-module $ \mathbb{C}^3$,
the $\mathfrak{sl}(3,\mathbb{C})$-modules
$\Lambda^2V$ and $V^*$ are isomorphic through the map
$$
\begin{array}{lcl}
\Lambda^2V&\longrightarrow& V^*\\
x\wedge y&\mapsto&\det(x,y,-),
\end{array}
$$
where $\det\colon \Lambda^3V\to\mathbb{C}$ denotes a nonzero fixed trilinear alternating map (in terms of Remark~\ref{eq_enterminosdepesosftales}, $\Lambda^2V\cong V(\lambda_2)=V(\lambda_{n-1})\cong V^*$). The same fact  occurs with their dual modules.
Thus, the number of copies of $V$, $V^*$ and $\mathbb{C}$ in the referred  decomposition~(\ref{eq_decomposiciondeltensor}) is now $9$.
\end{proof}

The above proof gives a hint about how to find the new bilinear maps occurring only for $n=3$. If we denote by $\times$ the cross product in $\mathbb{C}^3$, then the map
$$
\begin{array}{rcl}\mathbb{C}^3\times\mathbb{C}^3&\rightarrow&\mathbb{C}^3 \\
(z,w)&\mapsto& \bar z\times \bar w
\end{array}
$$
is an $\mathfrak{su}(3)$-invariant map.
Let $\Gamma_3$ be the vector space of $\R$-bilinear maps $\alpha\colon \mathfrak{m}\times\mathfrak{m}\to\mathfrak{m}$ such that $\mathrm{ad}( \mathfrak{su}(3))\subset\mathfrak{der}(\mathfrak{m},\alpha)$.

\begin{proposition}\label{bases7} An $\R$-bilinear map $\alpha$ belongs to $\Gamma_3$ if and only if there exist $q_{1},q_{2},q_{3},q_{4}\in \mathbb{C}$ and $t\in \mathbb{R}$ such that
\begin{equation}\label{eq_losalfascasoSU4}
\alpha((z,a),(w,b))=\Big( q_{1}bz+q_{2}aw+q_{4}\,\bar z\times \bar w ,\, \mathbf{i}  \left(tab+\mathrm{Im}(q_{3}\overline{z}^tw)\right)\Big),
\end{equation}
for all $(z,a),(w,b)\in \mathfrak{m}$.
\end{proposition}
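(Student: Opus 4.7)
The plan is to adapt the proof of Proposition~\ref{base} by producing two additional $\mathfrak{su}(3)$-invariant maps that account for the jump from dimension $7$ to dimension $9$ predicted by Lemma~\ref{CrisS7}. The seven maps in (\ref{eq_labase}) are built only from the $\mathfrak{su}(n)$-equivariant ingredients $(z,a)\mapsto z$, $(z,a)\mapsto a$, multiplication by $\mathbf{i}$, and the Hermitian form $\bar z^{t}w$, none of which depend on the condition $n\ge 4$. Hence, for $n=3$ they still lie in $\Gamma_3$ and are linearly independent.

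To exhibit the two missing generators, I would consider the complex cross product $\times\colon\mathbb{C}^3\times\mathbb{C}^3\to\mathbb{C}^3$ and define
$$
\mu_1\bigl((z,a),(w,b)\bigr)=(\bar z\times \bar w,0),\qquad
\mu_{\mathbf{i}}\bigl((z,a),(w,b)\bigr)=(\mathbf{i}\,\bar z\times \bar w,0).
$$
The key step is the $\mathfrak{su}(3)$-invariance of $\mu_1$, which amounts to proving
\begin{equation*}
B(\bar z\times \bar w)=\overline{Bz}\times \bar w+\bar z\times \overline{Bw}\qquad(B\in\mathfrak{su}(3),\,z,w\in\mathbb{C}^3).
\end{equation*}
Since $B\in\mathfrak{su}(3)$ satisfies $\bar B=-B^{t}$, setting $u=\bar z$, $v=\bar w$ reduces the identity to $B(u\times v)+B^{t}u\times v+u\times B^{t}v=0$. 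This follows from the standard fact that for a traceless matrix $A$,
$$
\det(Au,v,w)+\det(u,Av,w)+\det(u,v,Aw)=0,
$$
obtained by differentiating $\det(e^{tA}u,e^{tA}v,e^{tA}w)=e^{t\,\mathrm{tr}(A)}\det(u,v,w)$ at $t=0$, and then rewriting $\det(u,v,w)=w^{t}(u\times v)$ with $A=B^{t}$. Invariance of $\mu_{\mathbf{i}}$ is immediate because the action of $\mathfrak{su}(3)$ is $\mathbb{C}$-linear, so it commutes with multiplication by $\mathbf{i}$.

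The nine maps $\{\alpha_1,\alpha_{\mathbf{i}},\beta_1,\beta_{\mathbf{i}},\gamma_1,\gamma_{\mathbf{i}},\delta,\mu_1,\mu_{\mathbf{i}}\}$ are easily seen to be $\mathbb{R}$-linearly independent: the first seven have vanishing contribution to $\bar z\times \bar w$, while the symbols $\alpha_1,\beta_1,\gamma_1,\delta$ (resp.\ $\alpha_{\mathbf{i}},\beta_{\mathbf{i}},\gamma_{\mathbf{i}}$) are distinguished by the arguments $(z,a),(w,b)$ on which they depend. Together with Lemma~\ref{CrisS7}, which fixes $\dim_{\mathbb{R}}\Gamma_3=9$, this shows that the nine maps form a basis of $\Gamma_3$. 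Writing a generic $\alpha\in\Gamma_3$ as a real linear combination and grouping the $\mathbb{R}$-pairs $(\alpha_1,\alpha_{\mathbf{i}})$, $(\beta_1,\beta_{\mathbf{i}})$, $(\gamma_1,\gamma_{\mathbf{i}})$ and $(\mu_1,\mu_{\mathbf{i}})$ into complex coefficients $q_1,q_2,q_3,q_4\in\mathbb{C}$, with the remaining real coefficient $t\in\mathbb{R}$ coming from $\delta$, yields exactly the expression (\ref{eq_losalfascasoSU4}).

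The only substantive obstacle is the verification of the invariance identity for the cross product, and even this is handled cleanly by the trace argument above; all other steps are bookkeeping matched against the dimension count already established in Lemma~\ref{CrisS7}.
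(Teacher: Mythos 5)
Your proposal is correct and follows essentially the same route as the paper: keep the seven invariant maps from the $n\ge 4$ case, adjoin the two cross-product maps $(\bar z\times\bar w,0)$ and $(\mathbf{i}\,\bar z\times\bar w,0)$ (called $\varepsilon_1,\varepsilon_{\mathbf{i}}$ in the paper), check linear independence, and conclude by the dimension count of Lemma~\ref{CrisS7}. The only difference is that you spell out the $\mathfrak{su}(3)$-equivariance of the cross product via the traceless-derivation identity for $\det$, a verification the paper merely asserts.
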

\begin{proof}
We know that   $B=\{\alpha_{1},\alpha_{\bf{i}},\beta_{1},\beta_{\bf{i}},\gamma_{1},\gamma_{\bf{i}}, \delta \}$
described in Equation~(\ref{eq_labase}) is a  linearly independent set of  $\mathfrak{su}(3)$-invariant bilinear maps.
Observe that the new maps
$$
\varepsilon_1((z,a),(w,b))=(\bar z\times \bar w,0), \qquad   \varepsilon_{\bf{i}}((z,a),(w,b))=({\bf{i}}\,\bar z\times \bar w,0),      \\
$$
both satisfy Equation~(\ref{casoconexo}) and $B\cup\{\varepsilon_1,\varepsilon_{\bf{i}} \}$ follows being a linearly independent set, providing thus a basis of $\Gamma_3$.
\end{proof}

\begin{proposition}
An invariant affine connection $\nabla$ on $\SS^7$ is metric if and only if there are $q_{1},q_{2}\in \C$ and $t\in \R$ such that the corresponding $\R$-bilinear map $\alpha_{_{\nabla}}\in \Gamma_{3}$ satisfies
\begin{equation}\label{mesa7}
\alpha_{_\nabla}=\mathrm{Re}(q_{1})(\alpha_{1}-\gamma_{1})+\mathrm{Im}(q_{1})(\alpha_{\bf{i}}+\gamma_{\bf{i}})+t\beta_{1}+\mathrm{Re}(q_{2})\varepsilon_{1}+\mathrm{Im}(q_{2})\varepsilon_{\bf i}.
\end{equation}
\end{proposition}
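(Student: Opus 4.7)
The plan is to argue exactly as in Proposition \ref{pr_mesa}: invoke Theorem \ref{th_eldelascompatiblesconlametrica} to translate the metric condition on $\nabla$ into the $\R$-trilinear identity
$\tilde{\alpha}(C,A,B):= g(\alpha(C,A),B)+g(A,\alpha(C,B)) = 0$
for all $A,B,C\in\mathfrak{m}$, and then exploit its linearity in $\alpha$. Since Proposition \ref{bases7} shows that a basis of $\Gamma_3$ is obtained by adjoining the two new maps $\varepsilon_1,\varepsilon_{\mathbf{i}}$ to the basis $B=\{\alpha_1,\alpha_{\mathbf{i}},\beta_1,\beta_{\mathbf{i}},\gamma_1,\gamma_{\mathbf{i}},\delta\}$ already available for $n\ge4$, the task splits into two independent pieces.

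For the seven elements of $B$ the computation is identical to the one in the proof of Proposition \ref{pr_mesa} (the new $\bar z\times\bar w$ term in (\ref{eq_losalfascasoSU4}) is absent), so the constraint $\tilde{\alpha}\equiv 0$ restricted to $\mathrm{span}(B)$ forces $t=\mathrm{Im}(q_2)=0$ and $q_3=-\overline{q_1}$, i.e.\ the $B$-part of $\alpha_{_\nabla}$ must be $\mathrm{Re}(q_1)(\alpha_1-\gamma_1)+\mathrm{Im}(q_1)(\alpha_{\mathbf{i}}+\gamma_{\mathbf{i}})+t\beta_1$. The only genuinely new step is then to check that $\varepsilon_1$ and $\varepsilon_{\mathbf{i}}$ are metric, so that their coefficients remain entirely free.

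For that verification, the plan is to write $A=(x,a)$, $B=(y,b)$, $C=(u,c)$ and apply the scalar triple-product identity $v^{t}(w_1\times w_2)=\det(v,w_1,w_2)$ for the complex determinant on $\C^3$. This gives
$(\bar u\times\bar x)^{t}\bar y=\det(\bar y,\bar u,\bar x)=\overline{\det(y,u,x)}$
and
$x^{t}(u\times y)=\det(x,u,y)=-\det(y,u,x)$
(by swapping the first and third columns). The vertical components of $\varepsilon_1(C,A)$ and $\varepsilon_1(C,B)$ are zero, so only the horizontal contributions enter $\tilde{\varepsilon_1}$, and taking real parts the two terms cancel. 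Inserting an $\mathbf{i}$ throughout turns this cancellation of real parts into a cancellation of imaginary parts, showing that $\tilde{\varepsilon_{\mathbf{i}}}\equiv 0$ as well.

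Putting the pieces together, a general $\alpha_{_\nabla}\in\Gamma_3$ is metric if and only if its projection onto $\mathrm{span}(B)$ has the form above and its $\varepsilon_1,\varepsilon_{\mathbf{i}}$ coefficients are arbitrary; renaming these last two real coefficients as $\mathrm{Re}(q_2),\mathrm{Im}(q_2)$ yields (\ref{mesa7}). No cross-term obstruction can arise because $\tilde{\varepsilon_1}$ and $\tilde{\varepsilon_{\mathbf{i}}}$ vanish identically, so the constraints coming from the other basis elements are unaffected. The only point requiring care is the bookkeeping of signs and conjugations in the alternating determinant on $\C^3$, which is elementary but must be carried out carefully.
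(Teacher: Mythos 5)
Your proposal is correct and follows essentially the same route as the paper: the paper likewise reduces to the computation already done for $n\ge 4$ on the span of the seven old basis maps and then observes that $\varepsilon_{1}$ and $\varepsilon_{\bf i}$ satisfy the metric condition~(\ref{metric}), so that their coefficients are unconstrained. Your explicit verification via the scalar triple product $\bar y^{t}(\bar u\times\bar x)=\overline{\det(y,u,x)}$ and $x^{t}(u\times y)=-\det(y,u,x)$ simply fills in a step the paper leaves to the reader, and it checks out.
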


\begin{proof}
First note that $ \varepsilon_{1}$ and $ \varepsilon_{\bf{i}}$ satisfy Equation~(\ref{metric}).
Now a map  $\alpha$ as in Equation~(\ref{eq_losalfascasoSU4}) belongs to $\mathfrak{so}(\mathfrak{m},g)$ when $t=\mathrm{Im}(q_{2})=0$ and $q_{3}=-\overline{q_{1}}$, so the result holds.
\end{proof}

The natural connection can be achieved in the same way as in Remark \ref{conexionnatural}. In particular, we have that
$[\,\,,\,\,]_{\mathfrak{m}}=\frac{4}{3}(\alpha_{1}-\beta_{1})-2\gamma_{1}\in\Gamma_3$.

\begin{corollary}\label{torsion7} The torsion $T^{_{\nabla}}$ of the $\SU(4)$-invariant metric affine connection $\nabla$ corresponding with the $\R$-bilinear map $\alpha_{_{\nabla}}\in \Gamma_3$ in {\rm (\ref{mesa7})} is characterized by
$$
T^{_{\nabla}}((z,a),(w,b))=\Big(\Big(q_{1}-t-\frac{4}{3}\Big)(bz-aw)+2q_{2}\, \bar z\times \bar w , (\mathrm{Re}(q_{1})-1)(\overline{w}^tz-\overline{z}^tw)\Big),
$$
for any $(z,a),(w,b)\in \mathfrak{m}$. In particular, the Levi-Civita connection  $\nabla^g$ is achieved for $q_{1}=1$, $q_{2}=0$ and $t=-1/3$.
\end{corollary}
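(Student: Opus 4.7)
The plan is to derive the formula by direct application of identity~(\ref{tor}), namely $T^{\nabla}(A,B)=\alpha_{_{\nabla}}(A,B)-\alpha_{_{\nabla}}(B,A)-[A,B]_{\mathfrak{m}}$, evaluated on a generic pair $A=(z,a)$, $B=(w,b)\in\mathfrak{m}$. The first step is to note that the antisymmetrization $\alpha(A,B)-\alpha(B,A)$ can be computed basis-element by basis-element on the nine generators listed in the proof of Proposition~\ref{bases7}. Using the explicit formulas in (\ref{eq_labase}) together with the two extra maps $\varepsilon_{1}$, $\varepsilon_{\bf i}$, one reads off that:
\begin{itemize}
\item $\alpha_{1}$, $\alpha_{\bf i}$ contribute $(bz-aw,0)$ and $({\bf i}(bz-aw),0)$ respectively,
\item $\beta_{1}$ contributes $-(bz-aw,0)$ and $\beta_{\bf i}$ contributes $-({\bf i}(bz-aw),0)$,
\item $\gamma_{1}$ contributes $(0,2{\bf i}\,\mathrm{Im}(\bar z^{t}w))$, whereas $\gamma_{\bf i}$ is symmetric (because $\mathrm{Re}(\bar z^{t}w)=\mathrm{Re}(\bar w^{t}z)$) and therefore contributes $0$; analogously $\delta$ is symmetric and drops out,
\item $\varepsilon_{1}$, $\varepsilon_{\bf i}$ contribute $(2\,\bar z\times\bar w,0)$ and $(2{\bf i}\,\bar z\times\bar w,0)$, using the antisymmetry of the cross product.
\end{itemize}

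With the coefficients prescribed by (\ref{mesa7}), assembling the above yields
\[
\alpha_{_{\nabla}}(A,B)-\alpha_{_{\nabla}}(B,A)=\Big((q_{1}-t)(bz-aw)+2q_{2}\,\bar z\times\bar w,\; -2{\bf i}\,\mathrm{Re}(q_{1})\,\mathrm{Im}(\bar z^{t}w)\Big),
\]
where the real and imaginary parts of $q_{1}$ and $q_{2}$ have been recombined into the complex scalars. The second step is to subtract the $\mathfrak{m}$-component of the bracket; using the identity $[\,,\,]_{\mathfrak{m}}=\tfrac{4}{3}(\alpha_{1}-\beta_{1})-2\gamma_{1}$ recorded just above the corollary, one obtains
\[
[A,B]_{\mathfrak{m}}=\Big(\tfrac{4}{3}(bz-aw),\;-2{\bf i}\,\mathrm{Im}(\bar z^{t}w)\Big).
\]

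Subtracting these two expressions gives the first component $(q_{1}-t-\tfrac{4}{3})(bz-aw)+2q_{2}\,\bar z\times\bar w$ directly, and the second component $2{\bf i}(1-\mathrm{Re}(q_{1}))\mathrm{Im}(\bar z^{t}w)$. Recognizing that $\bar w^{t}z-\bar z^{t}w=\overline{\bar z^{t}w}-\bar z^{t}w=-2{\bf i}\,\mathrm{Im}(\bar z^{t}w)$, this second component rewrites as $(\mathrm{Re}(q_{1})-1)(\bar w^{t}z-\bar z^{t}w)$, which is exactly the announced expression. The proof ends by specializing $q_{1}=1$, $q_{2}=0$, $t=-1/3$ to check that $T^{\nabla}\equiv 0$, which by uniqueness identifies the resulting connection with the Levi-Civita one (consistent with Remark~\ref{LV} for $n=3$). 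The whole argument is therefore a book-keeping exercise; no genuine obstacle is expected, the only subtlety being the careful tracking of complex conjugation in the $\gamma$- and $\varepsilon$-type terms.
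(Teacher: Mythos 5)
Your computation is correct and is exactly the argument the paper intends: the paper's proof is simply ``an easy computation from (\ref{tor})'' using the identity $[\,\,,\,\,]_{\mathfrak{m}}=\frac{4}{3}(\alpha_{1}-\beta_{1})-2\gamma_{1}$ recorded just before the corollary, and your term-by-term antisymmetrization of the basis maps, the recombination into the complex scalars $q_1,q_2$, and the identity $\overline{w}^tz-\overline{z}^tw=-2\mathbf{i}\,\mathrm{Im}(\overline{z}^tw)$ all check out. The specialization to $q_1=1$, $q_2=0$, $t=-1/3$ correctly identifies the Levi-Civita connection as the unique metric connection with vanishing torsion.
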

\begin{proof}  
The proof is an easy computation from (\ref{tor}).
\end{proof}

In order to give explicit expressions for the invariant affine connections which appear in $\SS^7$, we have to consider a specific geometrical structure for seven-dimensional manifolds.
Recall (see for instance   \cite[Chapter 14]{Blair})  that a seven-dimensional Riemannian manifold $(M,g)$ is said to be a $3$-\emph{Sasakian manifold} whenever $M$ is endowed with three compatible Sasakian structures $(\xi_{i},\eta_{i}, \psi_{i})$, $i=1,2,3$. That is, the following formulas hold   
$$
[\xi_{1},\xi_{2}]=2\,\xi_{3},\quad [\xi_{2},\xi_{3}]=2\,\xi_{1},\quad [\xi_{3},\xi_{1}]=2\,\xi_{2}
$$
and also
$$
\begin{array}{lll}\vspace{3pt}
\psi_{3}\circ \psi_{2}=-\psi_{1}+\eta_{2}\otimes \xi_{3},& \psi_{2}\circ \psi_{1}=-\psi_{3}+\eta_{1}\otimes \xi_{2}, & \psi_{1}\circ \psi_{3}=-\psi_{2}+\eta_{3}\otimes \xi_{1},
\\
\psi_{2}\circ \psi_{3}=\psi_{1}+\eta_{3}\otimes \xi_{2},& \psi_{1}\circ \psi_{2}=\psi_{3}+\eta_{2}\otimes \xi_{1},&
\psi_{3}\circ \psi_{1}=\psi_{2}+\eta_{1}\otimes \xi_{3}.
\end{array}
$$
The canonical example of a $3$-Sasakian manifold is the sphere $\SS^{7}$ realized as a hypersurface of $\H^2$ as follows
\begin{equation}\label{eq_identificacion}
\SS^{7}\subset \C^{4}\cong\H^{2}, \quad z=(z_{_1},z_{_2},z_{_3},z_{_4})\mapsto (z_{_1}+z_{_2}\,{\bf j},z_{_3}+z_{_4}\,{\bf j}).
\end{equation}
Now, let $N$ be the unit outward normal vector field to $\SS^{7}$ and  $\xi_{1},\xi_{2},\xi_{3}\in \mathfrak{X}(\SS^{7})$ given by $\xi_{1}(z)=-{\bf i}z$, $\xi_{2}(z)=-{\bf j}z$ and
$\xi_{3}(z)=-{\bf k}z$, respectively. (Note that $\xi_1$   was denoted by $\xi$ in Section 3.)
Analogously to (\ref{sasaki}), for any vector field $X\in \mathfrak{X}(\SS^{7})$, the decompositions of $\mathbf{i} X$, $\mathbf{j} X$ and $\mathbf{k} X$ into tangent and normal components determine the $(1,1)$-tensor fields $\psi_{1},\psi_{2}$ and $\psi_3$ and the differential $1$-forms $\eta_{1},\eta_{2}$ and $\eta_{3}$ on $\SS^{7}$, respectively.
We denote by $\mathcal{D}$ the distribution $\mathrm{Span}\{\xi_{1},\xi_{2},\xi_{3}\}$ (there is no confusion with $\mathcal{D}$ denoting the difference tensor) and by $\Phi_s$ the 2-differential form given by $\Phi_s(X,Y)=g(X,\psi_s(Y))$ for each $s=1,2,3$. 
Note that the complex structure $\psi_1$ on
$\xi_{1}^{\perp}$ satisfies $\psi_{1}(\xi_{2})=\xi_{3}$. The $3$-Sasakian structure provides the main tools to describe the new invariant metric affine connections on $\SS^{7}$. In fact, consider the
 $3$-differential form  
 \begin{equation}\label{LaOmega}
 \Omega=\frac{1}{2}(\eta_{2}\wedge d\eta_{2}-\eta_{3}\wedge d\eta_{3})= \eta_{2}\wedge \Phi_{2}-\eta_{3}\wedge \Phi_{3}
 \end{equation}
  on the $3$-Sasakian manifold $\SS^{7}$. Our first aim is to prove that  $\Omega$ is $\SU(4)$-invariant.
  This   is not an easy task.
  Roughly speaking, we will relate the $\SU(4)$-invariance of    $\Omega$ with that one of the Hermitian metric of the projective complex space.

Recall \cite[Chapter~XI, Example~10.5]{Draper:KobayashiNomizu} that the 3-dimensional projective complex space $\C P^{3}$ can be endowed with the usual Fubini-Study metric $g_{_{FS}}$ of constant holomorphic sectional curvature $4$. Thus, the Hopf map $p\colon \SS^{7}\to \C P^{3}$, $z\mapsto p(z)=[z]$ is a Riemannian submersion. It is a well-known fact that $(\C P^{3}, g_{_{FS}})$ is a K\"ahler manifold. We denote by $J$ its complex structure and by $H$ its Hermitian metric, which satisfies
$\mathrm{Re}(H)=g_{_{FS}}$ and $\mathrm{Im}(H)=\omega$, where $\omega$ is the K\"ahler form of  $(\C P^{3}, g_{_{FS}})$.
The Lie group $\SU(4)$ acts transitively on the left on $\C P^{3}$ in  such a way that $p(\tau_{\sigma}(z))=\tau_{\sigma}(p(z))$ for every $\sigma\in \SU(4)$ (here we also denote by $\tau_{\sigma}$ the action of $\SU(4)$ on $\C P^3$). The Hermitian metric  $H$ is $\SU(4)$-invariant. Moreover, the Hopf map $p$ satisfies $J\circ p_{*}= p_{*} \circ\psi_{1}$. That is, the map $p$ is a contact-complex Riemannian submersion in the terminology of \cite[Chapter~4]{Submer}.

A $\psi_1$-complex frame $\mathcal{B}=\{l_1, l_2, l_3\}$ on $\xi_{1}^{\perp}$ at $z\in \SS^7$
(in other words,  
$\mathcal{B}\cup\psi_1(\mathcal{B})$
is a real basis of $\xi_{1}^{\perp}(z)\le T_z\SS^7$)
is said to be \emph{unitary} when 
the set $p_{*}(\mathcal{B})=\{p_{*}(l_1 ), p_{*}(l_2 ), p_{*}(l_3 )\}$ is a unitary frame for $H$ at $[z]\in \C P^3$, that is, if
$
H(p_{*}(l_i),p_{*}( l_j))=\delta_{ij}.
$
(Note that $H(p_{*}(l_i),p_{*}( l_j))=g(l_i, l_j)+\mathbf{i}g(l_i, \psi_{1}(l_j))$.)
From $\mathcal{B}$, we introduce the following complex valued $1$-forms on $T_{z}\SS^7$,
\begin{equation}\label{complexforms}
\omega_s:=H( p_{*}(-), p_{*}(l_s))=l_s ^\flat+{\bf i}\psi_{1}(l_s)^{\flat},
 \end{equation}
  for all $s=1,2,3$. Observe that $\omega_{s}(\xi_{1}(z))=0$ for any $s$.

\begin{lemma}\label{algebra2}
For every $z\in \SS^7$ and $x\in T_{z}\SS^7$ with $x\in \mathcal{D}^{\perp}(z)$ and $g(x,x)=1$,
\begin{enumerate}
\item[a)] The set $\{x, \psi_{2}( x), \xi_{2}(z)\}$ is a
$\psi_1$-complex unitary frame on $\xi_{1}^{\perp}(z)$;
\item[b)] The attached 1-form $\omega_{s}\vert_{\xi_{1}^{\perp}(z)}\colon \xi_{1}^{\perp}(z)\to\C$ is $\C$-linear for any $s=1,2,3$, where $\xi_{1}^{\perp}(z)$ is endowed with the complex structure given by $\psi_1$. That is,
$
\omega_{s}(\psi_{1}(u))=\mathbf{i}\omega_{s}(u)
$
for every $u\in \xi_{1}^{\perp}(z)$;
\item[c)] The $3$-differential form $\Omega$ at $z$ is given by $ \Omega_{z}=-\mathrm{Re}(\omega_1\wedge \omega_2\wedge \omega_3).$
\end{enumerate}
\end{lemma}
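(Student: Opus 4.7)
The plan is to reduce everything to computations in an orthonormal basis of $\xi_1^{\perp}(z)$ built from $x$. Two preliminary observations drive the proof. First, since $x \in \mathcal{D}^{\perp}(z)$, one has $\eta_s(x) = 0$ for all $s$, so the $3$-Sasakian composition rules $\psi_i \circ \psi_j = \pm \psi_k + \eta_j \otimes \xi_i$ collapse into $\psi_1(\psi_2 x) = \psi_3 x$ and the cyclic analogues on the orbit of $x$. Second, applying those same composition rules to the $\xi_i$'s (together with $\psi_i\xi_i = 0$) forces $\psi_j \xi_i = -\xi_k$, and then the Killing identity $[\xi_i,\xi_j] = \nabla^g_{\xi_i}\xi_j - \nabla^g_{\xi_j}\xi_i = -\psi_j\xi_i + \psi_i\xi_j = 2\xi_k$ yields $\psi_i \xi_j = \xi_k$; in particular $\psi_1(\xi_2) = \xi_3$.

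For (a), I would first verify that $\{x, \psi_1 x, \psi_2 x, \psi_3 x, \xi_2, \xi_3\}$ is an orthonormal real basis of $\xi_1^{\perp}(z)$. Indeed, $g(\psi_s x, \psi_t x) = -g(x, \psi_s\psi_t x)$ reduces via the first observation to $g(x,x) = 1$ if $s=t$ and $0$ otherwise; the cross terms $g(\psi_s x, \xi_t) = -g(x, \psi_s\xi_t) = \pm g(x, \xi_r) = 0$ all vanish, and $\{\xi_s\}$ is orthonormal by definition of the $3$-Sasakian structure. The $\psi_1$-complex-frame property of $\{x, \psi_2 x, \xi_2\}$ is then immediate from $\psi_1(\psi_2 x) = \psi_3 x$ and $\psi_1(\xi_2) = \xi_3$. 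For the unitary condition I would use that $p$ is a Riemannian submersion with horizontal distribution $\xi_1^{\perp}$ satisfying $J \circ p_* = p_* \circ \psi_1$; consequently $H(p_* u, p_* v) = g(u, v) + \mathbf{i}\,g(u, \psi_1 v)$ on $\xi_1^{\perp}(z)$, and substituting the orthonormality data just established gives $H(p_* l_i, p_* l_j) = \delta_{ij}$.

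Part (b) is then a one-line calculation from the definition $\omega_s = l_s^{\flat} + \mathbf{i}\,\psi_1(l_s)^{\flat}$: for $u \in \xi_1^{\perp}(z)$,
\begin{equation*}
\omega_s(\psi_1 u) = g(\psi_1 u, l_s) + \mathbf{i}\,g(\psi_1 u, \psi_1 l_s) = -g(u, \psi_1 l_s) + \mathbf{i}\,g(u, l_s) = \mathbf{i}\,\omega_s(u),
\end{equation*}
where I use the skew-symmetry $g(\psi_1\cdot,\cdot)+g(\cdot,\psi_1\cdot)=0$ together with $g(\psi_1 u, \psi_1 l_s) = g(u, l_s) - \eta_1(u)\eta_1(l_s) = g(u, l_s)$, valid because both $u$ and $l_s$ lie in $\xi_1^{\perp}$.

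Part (c) is the technical heart of the lemma and the main obstacle I foresee; it amounts to a careful sign-tracking exercise. I plan to write $\omega_1 = x^{\flat} + \mathbf{i}\,\psi_1 x^{\flat}$, $\omega_2 = \psi_2 x^{\flat} + \mathbf{i}\,\psi_3 x^{\flat}$, $\omega_3 = \xi_2^{\flat} + \mathbf{i}\,\xi_3^{\flat}$, expand $\omega_1\wedge\omega_2\wedge\omega_3$, and retain its real part, obtaining a sum of four wedge monomials in the dual basis from (a). Independently I would compute $\Phi_s = \sum_{i<j} g(e_i, \psi_s e_j)\,e_i^{\flat} \wedge e_j^{\flat}$ using the explicit $\psi_s$-action on the orthonormal basis together with the sign table for $\psi_i \xi_j$, and then form $\eta_s\wedge\Phi_s = \xi_s^{\flat}\wedge\Phi_s$. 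In the difference $\Omega = \eta_2\wedge\Phi_2 - \eta_3\wedge\Phi_3$ the two contributions of the form $\xi_1^{\flat}\wedge\xi_2^{\flat}\wedge\xi_3^{\flat}$ cancel, and the four remaining monomials match $-\mathrm{Re}(\omega_1\wedge\omega_2\wedge\omega_3)$ termwise. As a safeguard against sign errors, I would sanity-check on the triple $(x, \psi_3 x, \xi_3)$: the matrix $[\omega_s(v_t)]$ becomes the diagonal matrix $\mathrm{diag}(1, \mathbf{i}, \mathbf{i})$ of determinant $-1$, while $\Omega(x, \psi_3 x, \xi_3) = -\eta_3(\xi_3)\Phi_3(x, \psi_3 x) = 1$, so both sides agree.
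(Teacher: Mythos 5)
Your proposal is correct and follows essentially the same route as the paper: orthonormality of $\{x,\psi_1(x),\psi_2(x),\psi_3(x),\xi_2,\xi_3,\xi_1\}$ from the $3$-Sasakian relations for (a), the identities $g(\psi_1(u),l_s)=-g(u,\psi_1(l_s))$ and $g(\psi_1(u),\psi_1(l_s))=g(u,l_s)$ on $\xi_1^{\perp}$ for (b), and termwise comparison of $\eta_2\wedge\Phi_2-\eta_3\wedge\Phi_3$ with $-\mathrm{Re}(\omega_1\wedge\omega_2\wedge\omega_3)$ in the dual coframe for (c). The only difference is that you leave the expansion in (c) as a plan with a (correct) spot check on $(x,\psi_3(x),\xi_3)$, whereas the paper writes out $\Phi_2$ and $\Phi_3$ explicitly in that coframe and notes the cancellation of the two $\eta_1\wedge\eta_2\wedge\eta_3$-type monomials --- exactly the computation you describe.
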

\begin{proof}
The formulas  relating the three Sasakian structures on $\SS^7$ imply that the set
\begin{equation}\label{basebonita}
\Big\{x, \psi_{1}(x), \psi_{2}(x), \psi_{3}(x),\xi_{2}(z),\xi_{3}(z),\xi_{1}(z)\Big\}
\end{equation}
is  an orthonormal basis of $T_{z}\SS^{7}$ and, since $\psi_{3}(x)=\psi_{1}(\psi_{2}(x))$ and $\xi_{3}(z)=\psi_{1}(\xi_{2}(z))$, the set $\{x, \psi_{2}( x), \xi_{2}(z)\}$ is a
$\psi_1$-complex unitary frame on $\xi_{1}^{\perp}(z)$.

Now recall that $\psi_1$ is an isometry of $ \xi_{1}^{\perp}(z) $ and $\psi_1^2\vert_{\xi_{1}^{\perp}(z)}=-\id$. Thus, direct computations show that
$$
\begin{array}{l}
\omega_{1}(\psi_{1}(u))=-g(\psi_{1}(x),u)+\mathbf{i}g(x,u)=\mathbf{i}\omega_{1}(u),\\
 \omega_{2}(\psi_{1}(u))=-g(\psi_{3}(x),u)+\mathbf{i}g(\psi_{2}(x),u)=\mathbf{i}\omega_{2}(u),\\
 \omega_{3}(\psi_{1}(u))= -\eta_{3}(u)+\mathbf{i}\eta_{2}(u)=\mathbf{i}\omega_{3}(u),
\end{array}
$$
for every $u\in \xi_{1}^{\perp}(z)$.

Finally, we have at the point $z\in \SS^7$ that
$
\Phi_{2}=\psi_{2}(x)^{\flat}\wedge x^{\flat}+\psi_{1}(x)^{\flat}\wedge \psi_{3}(x)^{\flat}+\eta_{1}\wedge \eta_{3}
$
and
$
\Phi_{3}=\psi_{3}(x)^{\flat}\wedge x^{\flat}+\psi_{2}(x)^{\flat}\wedge \psi_{1}(x)^{\flat}+\eta_{2}\wedge \eta_{1}$.
Hence,
$$
 \Omega_{z}=
  \eta_{2}\wedge\big(\psi_{2}(x)^{\flat}\wedge x^{\flat}+\psi_{1}(x)^{\flat}\wedge \psi_{3}(x)^{\flat}\big)-\eta_{3}\wedge\big(\psi_{3}(x)^{\flat}\wedge x^{\flat}+\psi_{2}(x)^{\flat}\wedge \psi_{1}(x)^{\flat}\big),
$$
which coincides with
$$
- \mathrm{Re}(\omega_1\wedge \omega_2\wedge \omega_3)=
- \mathrm{Re}\big((x^\flat+\mathbf{i}\psi_1(x)^\flat)\wedge(\psi_2(x)^\flat+\mathbf{i}\psi_3(x)^\flat)\wedge(\eta_2+\mathbf{i}\eta_3)\big).
$$
\end{proof}

\begin{remark}
{\rm The above lemma permits to give an useful formula for $\Omega_{o}$ at the point $o=(0,0,0,1)\in \SS^7$. In fact, consider the $\psi_1$-complex unitary frame $\{e_{1}, \psi_{2}( e_{1}), \xi_{2}(o)\}$ at $\xi_{1}^{\perp}(o)$
corresponding to $\{(1,0,0,0), (0,1,0,0), (0,0,1,0)\}$ under the usual identification $T_{o}\SS^7\subset \C^4$. Then, for every $u=(u_{1},u_{2},u_{3},{\bf i} t)\in T_{o}\SS^7\subset \C^4$ with $t\in \R$ we get
$$
\omega_{1}(u)=u_{1},\quad \omega_{2}(u)=u_{2}, \quad \omega_{3}(u)=u_{3}.
$$
Therefore, if we denote by $u^{h}=(u_{1},u_{2},u_{3})\in\C^3$ (the coordinates of the projection of $u$ on $\xi_{1}^{\perp}(o)$, 
that is, the horizontal part of $u$ as in Section~3), 
then
\begin{equation}\label{pajaros}
\Omega_{o}(u,v,w)=-\mathrm{Re}(\mathrm{det}(u^{h},v^{h},w^{h}))
\end{equation}
for every $u,v,w\in T_{o}\SS^7$.
}
\end{remark}

To prove the invariance of $\Omega$ respect $\SU(4) $, let us fix  $\sigma  \in \SU(4)$.
As   $(\tau_{\sigma})_{*}(\mathcal{D}^{\perp}(o))\cap \mathcal{D}^{\perp}(\sigma o)\ne\{0\}$ by a dimension argument,
  there exists   $x\in \mathcal{D}^{\perp}(o)\le T_{o}\SS^7$ such that ${(\tau_{\sigma})}_{*}(x)=y\in \mathcal{D}^{\perp}(\sigma o)$.
  We scale $x$ to get  $g(x,x)=1$.
  From  Lemma~\ref{algebra2}, we can give expressions for $\Omega$ at the required points.

  On one hand, from the basis $\{x, \psi_{2}(x),\xi_{2}(o)\}$, we construct the complex valued $1$-forms $\omega_1$, $\omega_2$ and $\omega_3$ as in (\ref{complexforms}), so that $ \Omega_o=-\mathrm{Re}( w_{1}\wedge w_{2}\wedge w_{3})$.
  Recall that $(\tau_{\sigma})_{*}(\xi_{1}(o))=\xi_{1}(\sigma o)$ and $(\tau_{\sigma})_{*}\circ \psi_{1}=\psi_{1}\circ (\tau_{\sigma})_{*}$  by Lemma~\ref{blanco},
  which implies that $\mathcal{B}=\{y, (\tau_{\sigma})_{*}(\psi_{2}(x)), (\tau_{\sigma})_{*}(\xi_{2}(o))\}$ is a $\psi_{1}$-complex unitary basis of $\xi_{1}^{\perp}(\sigma o)$.
  Let  $\Gamma_{1}, \Gamma_{2}$ and $\Gamma_3$ be
  the attached complex valued $1$-forms. The fact that $p\circ \tau_{\sigma} =\tau_{\sigma}\circ  p$ and the invariance property of $H$ with respect to $\SU(4)$ allow to obtain $\tau_{\sigma}^{*} (\Gamma_{s})=\omega_{s}$ for $s=1,2,3$, so that $\tau_{\sigma}^{*}(\Omega)_{o}=-  \mathrm{Re}( \Gamma_{1}\wedge \Gamma_{2}\wedge \Gamma_{3})$.
On the other hand, the basis $\mathcal{B}'=\{y, \psi_{2}(y), \xi_{2}(\sigma o)\}$ of $\xi_{1}^{\perp}(\sigma o)$ satisfies the hypothesis of
  Lemma~\ref{algebra2}, so that the 3-form at $\sigma o $ is given by
  \begin{equation*}
 \Omega_{\sigma o}=- \mathrm{Re}( \omega^{\sigma}_{1}\wedge \omega^{\sigma}_{2}\wedge \omega^{\sigma}_{3}),
\end{equation*}
  being $\omega^{\sigma}_1, \omega^{\sigma}_2$ and $\omega^{\sigma}_3$ the complex valued $1$-forms related to $\mathcal{B}'$. The aim is to show that $\tau_{\sigma}^{*}(\Omega)_{o}=\Omega_{\sigma o}$. By using
  item b) in Lemma \ref{algebra2}, we know  that the forms $\omega^{\sigma}_s$  are $\C$-linear on $\xi_{1}^{\perp}(\sigma o)$ for $s=1,2,3$.
   Reasoning  as in the proof of item b), the forms $\Gamma_s$    are also $\C$-linear and so we have two ($\psi_1$-)complex 3-forms on $\xi_{1}^{\perp}(\sigma o)$, whose complex dimension is 3.
Therefore  there exists $\delta(\sigma,x)\in \C$ such that
 $$
 \omega^{\sigma}_{1}\wedge \omega^{\sigma}_2 \wedge \omega^{\sigma}_3= \delta(\sigma,x )\, \Gamma_{1}\wedge \Gamma_{2}\wedge \Gamma_{3}.
 $$
 This complex number satisfies $\delta(\sigma,x)\in \SS^1=\{\delta\in\C: \delta\bar \delta\equiv\vert\delta\vert=1\}$, since it is the determinant of a   change of basis between the unitary bases  $\mathcal{B}$ and $\mathcal{B}'$. In order to prove that $\delta(\sigma,x)=1$, consider the usual Hermitian product $h(z,w)=\sum_{l=1}^4 z_{l}\overline{w_l}$ in $\C^4$. Recall that $\mathrm{Re}(h)$ is nothing but $\langle\,\,,\,\, \rangle$, the usual inner product on $\R^8$.
In fact, we have   $h(z,w)=\langle z,w\rangle+{\bf i}\langle z, {\bf i}w\rangle$ for all $z,w\in \C^{4}$.
 A  straightforward computation shows that
 \begin{equation}\label{eq_parainvarianza}
 \begin{array}{rl}\vspace{3pt}
 \delta(\sigma,x)&=(\omega^{\sigma}_{2}\wedge \omega^{\sigma}_3)\big((\tau_{\sigma})_{*}(\psi_{2}(x)), (\tau_{\sigma})_{*}(\xi_{2}(o))\big)\\
 &=
 \det\left(\begin{array}{ cc }
 h(\sigma \mathbf{j} x, {\bf j}\sigma x) &    h(-\sigma {\bf j} o, {\bf j}\sigma x)  \\
    h(\sigma \mathbf{j} x,  -{\bf j}\sigma o) &  h(-\sigma {\bf j} o, -{\bf j}\sigma o)
\end{array}\right),
\end{array}
 \end{equation}
where $x=(x_{1}, x_{2},0,0)\in T_{o}\SS^7\subset \C^{4}\cong \H^2 $ with the above  identification~(\ref{eq_identificacion}).

The problem   is now easily treatable from an algebraical level. Recall that, from that identification~(\ref{eq_identificacion}), the conjugate linear isomorphism of $\C^4$ given by the multiplication by ${\bf j}$ acts as
${\bf j}(z_1,z_2,z_3,z_4)=(-\bar z_2,\bar z_1,-\bar z_4,\bar z_3)$. Moreover, ${\bf j}^2=-\id$ and the relationship with the usual Hermitian product $h$ is the following: $h({\bf j}u,v)=-\overline{h(u,{\bf j}v)}=-h({\bf j}v,u)$ for any $u,v\in \C^4$.

\begin{lemma}\label{le_positivo}
Consider the Grassmannian manifold 
$
\mathrm{G}_{2,2}(\C)
$
of two-dimensional complex subspaces of $\C^4$.
Define the map
$$
\begin{array}{rccl}
\beta\colon& \mathrm{G}_{2,2}(\C)&\to &\mathbb{R}\\
&U&\mapsto&\beta(U)=\det(u_1\vert\, {\bf j}u_1\vert u_2\vert\, {\bf j}u_2),
\end{array}
$$
for any $\{u_1,u_2\}$ unitary basis of $U$. Then:
\begin{itemize}
\item[a)] This map is well defined;
\item[b)] $\beta(U)=0$ if and only if $U= {\bf j}U$;
\item[c)] $\mathcal{A}:=\{U\in \mathrm{G}_{2,2}(\C)\mid  {\bf j}U\ne U\}$ is a connected open set of $\mathrm{G}_{2,2}(\C)$.
\end{itemize}
In particular  $\beta(U)\ge 0$ for any $U\in \mathrm{G}_{2,2}(\C)$.
\end{lemma}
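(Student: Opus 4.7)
The plan is to reduce the complex $4\times 4$ determinant to a Pfaffian and thus rewrite it as a manifestly real and non-negative quantity, from which all three items (a)--(c) plus the final inequality follow easily. Introduce on $\C^4$ the $\C$-bilinear alternating $2$-form $\omega_0(u,v):=h(u,{\bf j}v)$, which in the canonical basis equals $-(e^1\wedge e^2+e^3\wedge e^4)$ and therefore satisfies $\omega_0\wedge\omega_0=2\,e^1\wedge e^2\wedge e^3\wedge e^4$; its antisymmetry and $\C$-bilinearity are direct consequences of ${\bf j}^2=-\mathrm{id}$ and of the identity $h({\bf j}u,{\bf j}v)=\overline{h(u,v)}$ (both immediate from the explicit formula for ${\bf j}$).

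Using the classical identity $(\eta\wedge\eta)(v_1,v_2,v_3,v_4)=2\,\mathrm{Pf}\bigl(\eta(v_i,v_j)\bigr)$ for a $2$-form on a $4$-dimensional space, I evaluate both sides at $(v_1,v_2,v_3,v_4)=(u_1,{\bf j}u_1,u_2,{\bf j}u_2)$. The orthonormality $h(u_i,u_j)=\delta_{ij}$ renders the six skew entries elementary: $\omega_0(u_i,{\bf j}u_i)=-1$, $\omega_0(u_1,{\bf j}u_2)=\omega_0({\bf j}u_1,u_2)=0$ and $\omega_0({\bf j}u_1,{\bf j}u_2)=\overline{h(u_1,{\bf j}u_2)}$, so the Pfaffian collapses to $1-|h(u_1,{\bf j}u_2)|^2$. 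Comparing with the left-hand side yields the central formula
$$
\det\bigl(u_1\mid {\bf j}u_1\mid u_2\mid {\bf j}u_2\bigr)\;=\;1-|h(u_1,{\bf j}u_2)|^2\;\in\;[0,1],
$$
which is simultaneously real and non-negative; this gives the realness needed in (a) and the concluding assertion $\beta(U)\ge 0$ of the lemma. The remaining part of (a), namely independence from the choice of unitary basis, follows because a change $(u_1',u_2')=(u_1,u_2)P$ with $P\in\U(2)$ right-multiplies the matrix $M:=(u_1\mid {\bf j}u_1\mid u_2\mid {\bf j}u_2)$ by a $4\times 4$ block matrix that, after swapping two rows and two columns, decomposes as $P\oplus \bar P$ (the conjugated block appearing on the even columns because ${\bf j}$ is antilinear), with determinant $|\det P|^2=1$.

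For item (b), the explicit formula gives $\beta(U)=0$ iff $|h(u_1,{\bf j}u_2)|=1$; the Cauchy--Schwarz equality case then forces $u_1=\lambda\,{\bf j}u_2$ for some unit $\lambda\in\C$, so ${\bf j}u_2\in U$ and also ${\bf j}u_1=-\bar\lambda\,u_2\in U$, whence ${\bf j}U=U$. Conversely, if ${\bf j}U=U$ one may use the unitary basis $\{u_1,{\bf j}u_1\}$ of $U$ (which is unitary since $h(u_1,{\bf j}u_1)=\omega_0(u_1,u_1)=0$ and $h({\bf j}u_1,{\bf j}u_1)=\overline{h(u_1,u_1)}=1$), and the formula then yields $\beta(U)=1-|h(u_1,-u_1)|^2=0$.

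Finally, for (c), under the identification $\C^4\cong\H^2$ of (\ref{eq_identificacion}) the ${\bf j}$-invariant $2$-dimensional complex subspaces of $\C^4$ are exactly the quaternionic lines in $\H^2$, so they form a copy of $\H P^1\cong\SS^4$, a smooth submanifold of real dimension $4$ inside the smooth connected manifold $\mathrm{G}_{2,2}(\C)$ of real dimension $8$; since the real codimension is $4\ge 2$, removing this submanifold keeps connectedness, and $\mathcal{A}$ is open and connected. The main technical care lies in getting the six entries $\omega_0(v_i,v_j)$ right and in the sign of the Pfaffian identity; once that bookkeeping is fixed, the rest of the lemma is essentially read off from the displayed formula.
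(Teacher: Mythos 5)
Your proof is correct, and it takes a genuinely different route from the one in the paper. The paper establishes that $\beta(U)$ is real via the matrix identity $-PCP=\overline{C}$, proves (b) by a linear‑algebra argument on $U\cap{\bf j}U$, proves (c) by an explicit (and somewhat lengthy) construction of curves inside $\mathcal{A}$, and only then deduces $\beta\ge 0$ from continuity, connectedness and one evaluation. You instead extract the closed formula $\beta(U)=1-|h(u_1,{\bf j}u_2)|^2$ from the complex symplectic form $\omega_0(u,v)=h(u,{\bf j}v)$ and the Pfaffian identity; I checked the six entries and the signs, and the formula is right (it reproduces $\beta=1$ on $\mathrm{Span}\{e_1,e_3\}$ and $\beta=0$ on the ${\bf j}$‑stable plane $\mathrm{Span}\{e_1,e_2\}$). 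This buys you more than the paper proves: realness, the bound $\beta(U)\in[0,1]$, and the characterization (b) via the Cauchy--Schwarz equality case all drop out at once, and the non‑negativity no longer depends logically on item (c). Your treatment of (c) — identifying $\{U:{\bf j}U=U\}$ with the quaternionic lines $\H P^1\cong\SS^4$, of real codimension $4$ in the $8$‑dimensional Grassmannian, and invoking the standard fact that removing a closed submanifold of codimension at least $2$ preserves connectedness — is also cleaner than the paper's case‑by‑case curves, at the mild cost of appealing to that (unproved but standard) transversality fact and to the observation that a ${\bf j}$‑stable complex plane is a left $\H$‑submodule. The only cosmetic gap is that you do not write out the entry $\omega_0(u_1,u_2)=h(u_1,{\bf j}u_2)$ explicitly before using it in the collapsed Pfaffian, but the bookkeeping is otherwise consistent.
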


\begin{proof}
For any $u_1,u_2\in\C^4$,  denote $\beta(u_1,u_2):= \det(u_1\vert\, {\bf j}u_1\vert u_2\vert\, {\bf j}u_2)$. Since we have
that
\begin{equation*}\label{eq_porsinounitarialabase}
\beta(\alpha_{11}u_1+\alpha_{12}u_2,\alpha_{21}u_1+\alpha_{22}u_2)=\left\vert \det\left(
\begin{array}{cc}
\alpha_{11}&\alpha_{12}\\
\alpha_{21}&\alpha_{22}
\end{array}\right)\right\vert \beta(u_1,u_2)
\end{equation*}
for any $\alpha_{ij}\in\C$,
we get that $\beta(U)$ does not depend on the choice of the unitary basis of $U$.
The fact of being  $\beta(U)$ a real number is easily deduced from
   $-PCP=\overline{C}$ for $C=(u_1\vert\,{\bf j}u_1\vert\,u_2\vert\,{\bf j}u_2)$ and
$P=\tiny{\left(\begin{array}{rrrr}
0&-1&0&0\\
1&0&0&0\\
0&0&0&-1\\
0&0&1&0
\end{array}
\right)} $. In fact, $\det (-P^2)=\det I_4=1$ implies $\det C=\overline{\det C}$.

Take $U \in \mathrm{G}_{2,2}(\C)$ with $\beta(U)=0$.
Since $\{u_1,{\bf j}u_1,u_2,{\bf j}u_2\}$ is linearly dependent for any unitary basis $\{ u_1,u_2 \}$ of $U$, then $U+{\bf j}U\ne\C^4$ and there exists  $0\ne u\in U\cap {\bf j}U$. We can assume without loss of generality that $u=u_1$.
  Thus ${\bf j}u_1=\alpha_1u_1+\alpha_2u_2$ with $\alpha_2\ne0$, since for any
  $0\ne v\in\C^4$ the set $\{v,{\bf j}v\}$ is linearly independent ($h(v,{\bf j}v)=0\ne h(v,v)$).
  By multiplying by ${\bf j}$, we get ${\bf j}u_2=\frac1{\bar\alpha_2}(-u_1-\bar \alpha_1{\bf j}u_1)\in U$, so that ${\bf j}U=U$. This proves b) since the converse is trivial.

  Since $\beta$ is obviously a continuous map, $\mathcal{A}=\beta^{-1}(\mathbb{R}\setminus\{0\})$ must be an open set.
  In order to check the connectedness of $\mathcal{A}$,  we will provide, for any $U,V\in \mathcal{A}$ with $U\ne V$, a curve $\gamma\colon[0,1]\to \mathrm{G}_{2,2}(\C)$   such that $\gamma(0)=V$, $\gamma(1)=U$ and $\gamma(t)\in\mathcal{A}$ for any $t\in(0,1)$.
  First assume that $U\cap V\ne0$. Thus there are $v\in V$ and a basis $\{u_1,u_2\}$ of $U$ such that $\{u_1,v\}$ is a basis of $V$.
  If ${\bf j}u_1\notin U+V$, take
  $$
  \gamma(t)=\mathrm{Span} \{ u_1,tu_2+(1-t)v\},
  $$
  which satisfies the required conditions.
  If ${\bf j}u_1\in U+V$, complete to a basis  $\{u_1,u_2,v,w\}$  of $\C^4$ and take
  $$
  \gamma(t)=\mathrm{Span}\{  u_1,tu_2+(1-t)v+t(1-t)w\}.
  $$
Second assume that $U+V=\C^4$. Since $\beta(U)\ne0$, then $U+{\bf j}U=\C^4$ and $\pi_V({\bf j}U)=V$, for $\pi_V$ the ($h$-)orthogonal projection on $V$.
Take $\{u_1,u_2\}$ a unitary basis of $U$ and let $v_i=\pi_V({\bf j}u_i)$ for $i=1,2$. Then there is $\alpha\in\C$ such that ${\bf j}u_1=\pi_U({\bf j}u_1)+\pi_V({\bf j}u_1)=\alpha u_2+v_1$.
Note that this implies that ${\bf j}u_2=-\alpha u_1+v_2$, ${\bf j}v_1=(-1+\alpha\bar\alpha) u_1-\bar\alpha v_2$
and  ${\bf j}v_2=(-1+\alpha\bar\alpha) u_2+\bar\alpha v_1$. Thus, a suitable curve is, for instance,
$$
  \gamma(t)=\mathrm{Span}\{  tu_1+(1-t)v_1,tu_2+\varepsilon(1-t)v_2\},
  $$
  being $\varepsilon$ any nonzero (fixed) complex number if $\alpha=0$ and   $\varepsilon\ne-\frac{\bar\alpha}{\alpha}$ if $\alpha\ne0$.

Therefore the map $\beta$ has   constant sign. For $U=\mathrm{Span}\{(1,0,0,0), (0,0,1,0)\}$, it holds $\beta(U)=\det(I_4)=1>0$, what finishes the proof.
\end{proof}

\begin{lemma}\label{algebra}
 For any $U\in \mathrm{G}_{2,2}(\C)$,      
take $V=U^\perp$   its orthogonal  subspace relative to the usual Hermitian product $h$.
 For any $B_U=\{u_1,u_2\}$ and $B_V=\{v_1,v_2\}$ bases of $U$ and $V$ respectively, consider the matrices
 $$
 \sigma_{_{B_U,B_V}}=(u_{1}|v_{1}|u_{2}|v_{2}) \in   \GL(4,\C)
 $$
  and
 $$
 \tilde\sigma_{_{B_U,B_V}}=\left(\begin{array}{ cc }
 h(v_1 , {\bf j}u_{1}) &   h (v_2 , {\bf j}u_{1})  \\
h(v_1 , {\bf j}u_{2}) & h(v_2 , {\bf j}u_{2})
\end{array}\right)\in   \mathcal{M}_2(\C).
$$
Then
  $\alpha(U)= \frac{\det\left(\tilde\sigma_{_{B_U,B_V}}\right)}{\det\left(\sigma_{_{B_U,B_V}}\right)}$  is independent of the choice of the bases $B_U$ and $B_V$.
Furthermore  $\alpha(U)$ is a real nonnegative number.

\end{lemma}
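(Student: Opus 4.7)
The plan is first to verify that the ratio is independent of the chosen bases, and then to evaluate it in a convenient basis. For well-definedness, fix bases $B_U=\{u_1,u_2\}$, $B_V=\{v_1,v_2\}$ and alternative ones $u_k'=\sum_i P_{ik}u_i$, $v_l'=\sum_j Q_{jl}v_j$ with $P,Q\in\mathrm{GL}(2,\mathbb{C})$. Since $\sigma_{B_U,B_V}$ is obtained from the block matrix $(u_1|u_2|v_1|v_2)$ by swapping the second and third columns, the multilinear alternating nature of $\det$ gives $\det\sigma_{B_U',B_V'}=\det(P)\det(Q)\det\sigma_{B_U,B_V}$. For $\tilde\sigma$, using that multiplication by ${\bf j}$ is $\mathbb{C}$-antilinear on $\mathbb{C}^{4}$ and that $h$ is sesquilinear, one expands $h(v_l',{\bf j}u_k')$ and obtains the transformation rule $\tilde\sigma_{B_U',B_V'}=P^{t}\tilde\sigma_{B_U,B_V}Q$. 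Hence $\det\tilde\sigma$ rescales by the same factor $\det(P)\det(Q)$, so the quotient $\alpha(U)$ depends only on $U$.

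For the real and nonnegative statement, I would pick a unitary basis $B_U=\{u_1,u_2\}$ of $U$. If $U={\bf j}U$, then ${\bf j}u_k\in U\subset V^{\perp}$ makes every entry of $\tilde\sigma$ vanish, so $\alpha(U)=0$. Otherwise $U\in\mathcal{A}$; the argument used in the proof of Lemma \ref{le_positivo} shows in fact that $U\cap{\bf j}U=\{0\}$, which implies that the orthogonal projection $\pi_V\colon{\bf j}U\to V$ is an isomorphism. Therefore the vectors $b_k:=\pi_V({\bf j}u_k)$, $k=1,2$, form a basis $B_V$ of $V$, which is the convenient choice for the reference basis on $V$.

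With this choice two pleasant facts appear. Since $b_k$ differs from ${\bf j}u_k$ by an element of $U$, elementary column operations turn $\sigma_{B_U,B_V}=(u_1|b_1|u_2|b_2)$ into $(u_1|{\bf j}u_1|u_2|{\bf j}u_2)$ without altering the determinant, so $\det\sigma_{B_U,B_V}=\beta(U)$, which is strictly positive by Lemma \ref{le_positivo}. For $\tilde\sigma$, decomposing ${\bf j}u_k=\pi_U({\bf j}u_k)+b_k$ and using $b_l\perp U$ yields $\tilde\sigma_{kl}=h(b_l,{\bf j}u_k)=h(b_l,b_k)=\overline{h(b_k,b_l)}$, so $\tilde\sigma$ is the entrywise conjugate of the Hermitian positive-definite Gram matrix $G=(h(b_k,b_l))$. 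Hence $\det\tilde\sigma=\overline{\det G}=\det G>0$ and $\alpha(U)=\det G/\beta(U)>0$. I expect the main technical obstacle to lie in the first step, namely in keeping straight the conjugations induced by the antilinearity of ${\bf j}$ and by the sesquilinearity of $h$, so that the transformation rule for $\tilde\sigma$ comes out as $P^{t}\tilde\sigma Q$ rather than with stray factors of $\bar P$ or $\bar Q$.
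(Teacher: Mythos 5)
Your proof is correct and follows essentially the same route as the paper: the same scaling argument for basis independence (the paper only writes out the change of basis of $V$ and remarks that the $U$ case is analogous, whereas you make the cancellation of conjugations explicit via $\tilde\sigma\mapsto P^{t}\tilde\sigma Q$), and the same choice of the convenient basis $B_V=\{\pi_V({\bf j}u_1),\pi_V({\bf j}u_2)\}$ reducing the claim to $\beta(U)\ge 0$ from Lemma~\ref{le_positivo} and to the nonnegativity of a Gram determinant (which the paper phrases as Cauchy--Schwarz). Your case split on $U={\bf j}U$ versus $U\cap{\bf j}U=\{0\}$ is equivalent to the paper's split on the linear dependence of the two projections, so the argument matches in substance.
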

\begin{proof}

First, if we take a new basis $B_V'=\{\alpha_{11}v_1+\alpha_{12}v_2,\alpha_{21}v_1+\alpha_{22}v_2\}$ of $V$, we check that
$$\begin{array}{l}
\det\big(\sigma_{_{B_U,B_V'}}\big)=(\alpha_{11}\alpha_{22}-\alpha_{12}\alpha_{21}) \det\big(\sigma_{_{B_U,B_V}}\big),\\
\det\big(\tilde\sigma_{_{B_U,B_V'}}\big)=(\alpha_{11}\alpha_{22}-\alpha_{12}\alpha_{21}) \det\big(\tilde\sigma_{_{B_U,B_V}}\big),
\end{array}
$$
so   $\alpha(U)$ does not depend on the chosen basis of $V=U^\perp$. (The same reasoning applies to the bases of $U$.)

For checking that  $\alpha(U )$ is real, consider the orthogonal projection $\pi_V\colon\C^4\to V$, which is $\C$-linear, and fix a unitary basis $B_U$  of $U$.
In case that
$\{\pi_V({\bf j}u_1),\pi_V({\bf j}u_2)\}$ is   a $\C$-linearly  dependent set,
    there is a nonzero vector $v\in V$ ($h$-)orthogonal to the subspace $\langle\{\pi_V({\bf j}u_1),\pi_V({\bf j}u_2)\}\rangle$.  
    We choose the basis $B_V$ such that $v_1=v$. Thus the elements in the first column of the matrix
$\tilde\sigma_{_{B_U,B_V}}$ are $h(v,{\bf j}u_s)=h(v,\pi_V({\bf j}u_s))=0$ for all $s=1,2$, so that  $\alpha(U )=0$.

Otherwise, $\{\pi_V({\bf j}u_1),\pi_V({\bf j}u_2)\}$ constitutes a basis $B'_V$ of $V$. On one hand, we have that
 $$
 \begin{array}{rl}
 \det\big(\tilde\sigma_{_{B_U,B'_V}}\big)=&h(\pi_V({\bf j}u_1),\pi_V({\bf j}u_1))h(\pi_V({\bf j}u_2),\pi_V({\bf j}u_2))\\
 &-
 h(\pi_V({\bf j}u_1),\pi_V({\bf j}u_2))h(\pi_V({\bf j}u_2),\pi_V({\bf j}u_1))\in\mathbb{R}_{\ge0},
 \end{array}
 $$
   by the Cauchy-Schwarz inequality. On the other hand, as a determinant with three columns in $U$ is necessarily zero,
$$
\begin{array}{rl}
\det\big( \sigma_{_{B_U,B'_V}}\big)&=\det(u_1\vert \pi_V({\bf j}u_1)\vert u_2\vert \pi_V({\bf j}u_2)) \\
&=\det(u_1\vert\, {\bf j}u_1\vert u_2\vert\, {\bf j}u_2)=\beta(U)\in\mathbb{R}_{\ge0},
\end{array}
$$
by Lemma~\ref{le_positivo}. Then the quotient $\alpha(U)$   is also real
and nonnegative.

\end{proof}

Now we are in a position to show the announced invariance of $\Omega$.

\begin{proposition}\label{invarianzaCris}
The $3$-differential form $\Omega=\frac{1}{2}(\eta_{2}\wedge d\eta_{2}-\eta_{3}\wedge d\eta_{3})$ on the $3$-Sasakian manifold $\SS^{7}$ is $\SU(4)$-invariant. That is, 
$
\tau_{\sigma}^{*}(\Omega)=\Omega 
$
 for every $\sigma\in \SU(4)$.
\end{proposition}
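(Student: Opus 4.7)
The plan is to complete the framework already set up in the paragraphs preceding the proposition. First I would reduce the global identity $\tau_{\sigma}^{*}\Omega=\Omega$ to the pointwise identity $(\tau_{\sigma}^{*}\Omega)_o=\Omega_o$ for every $\sigma\in\SU(4)$. This is a standard use of transitivity: given any other point $p=\sigma_{0}\cdot o$, writing tangent vectors at $p$ as images under $(\tau_{\sigma_{0}})_{*}$ of vectors at $o$ and using the functoriality of pullback reduces $(\tau_{\sigma}^{*}\Omega)_p=\Omega_p$ to two instances of the pointwise identity at $o$, for the Lie group elements $\sigma_{0}$ and $\sigma\sigma_{0}$.

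Now fix $\sigma\in\SU(4)$. Following the setup just before the statement, I pick $x\in\mathcal{D}^{\perp}(o)$ with $g(x,x)=1$ and $y:=(\tau_{\sigma})_{*}x\in\mathcal{D}^{\perp}(\sigma\cdot o)$, and form the two $\psi_{1}$-unitary complex frames of $\xi_{1}^{\perp}(\sigma\cdot o)$ provided by Lemma~\ref{algebra2}: the transported frame $\mathcal{B}=\{y,(\tau_{\sigma})_{*}\psi_{2}(x),(\tau_{\sigma})_{*}\xi_{2}(o)\}$ with attached complex 1-forms $\Gamma_{s}$, and the intrinsic frame $\mathcal{B}'=\{y,\psi_{2}(y),\xi_{2}(\sigma\cdot o)\}$ with attached 1-forms $\omega_{s}^{\sigma}$. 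The equivariance $p\circ\tau_{\sigma}=\tau_{\sigma}\circ p$ of the Hopf map together with the $\SU(4)$-invariance of the Fubini--Study Hermitian metric $H$ give $\tau_{\sigma}^{*}\Gamma_{s}=\omega_{s}$, where $\omega_{s}$ are the 1-forms at $o$ attached to $\{x,\psi_{2}(x),\xi_{2}(o)\}$. Using Lemma~\ref{algebra2}.c for both frames, the desired identity $(\tau_{\sigma}^{*}\Omega)_o=\Omega_o$ becomes equivalent to $\delta(\sigma,x)=1$, where $\delta(\sigma,x)\in\SS^{1}$ is the unique scalar such that $\omega_{1}^{\sigma}\wedge\omega_{2}^{\sigma}\wedge\omega_{3}^{\sigma}=\delta(\sigma,x)\,\Gamma_{1}\wedge\Gamma_{2}\wedge\Gamma_{3}$ (note that both sides are manifestly $\mathbf{i}$-$\mathbb{C}$-linear three-forms on $\xi_{1}^{\perp}(\sigma\cdot o)$, and the wedge of three Hermitian-orthonormal forms is unique up to a unit complex scalar).

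This last step is the main obstacle, and it is precisely what Lemmas~\ref{le_positivo} and~\ref{algebra} are designed to address. The plan is to recognize the explicit formula~(\ref{eq_parainvarianza}) as the invariant $\alpha(U)$ of Lemma~\ref{algebra}, applied to $U:=\sigma\cdot\mathrm{Span}_{\mathbb{C}}\{x,o\}\in\mathrm{G}_{2,2}(\mathbb{C})$ with bases $B_{U}=\{\sigma x,\sigma o\}$ and $B_{V}=\{\sigma\mathbf{j}x,\sigma\mathbf{j}o\}$. Two auxiliary facts are needed. First, that $B_{V}$ actually lies in $V=U^{\perp}$: cancelling the $h$-unitary factor $\sigma$ reduces this to verifying the four relations $h(\mathbf{j}x,x)=h(\mathbf{j}o,o)=h(\mathbf{j}x,o)=h(\mathbf{j}o,x)=0$, which are immediate from the explicit shape $x=(x_{1},x_{2},0,0)$, $o=(0,0,0,1)$ together with the formula for $\mathbf{j}$ coming from~(\ref{eq_identificacion}). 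Second, that $\det(\sigma_{B_{U},B_{V}})=\det\sigma\cdot\det(x\vert\mathbf{j}x\vert o\vert\mathbf{j}o)=|x_{1}|^{2}+|x_{2}|^{2}=1$ because $\sigma\in\SU(4)$. With these at hand, Lemma~\ref{algebra} yields $\delta(\sigma,x)=\alpha(U)\in\mathbb{R}_{\ge 0}$; combined with the previously established $|\delta(\sigma,x)|=1$, this forces $\delta(\sigma,x)=1$ and the $\SU(4)$-invariance of $\Omega$ follows.
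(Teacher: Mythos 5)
Your proposal is correct and follows essentially the same route as the paper's own proof: the same pair of $\psi_1$-unitary frames at $\sigma\cdot o$, the same reduction to showing the unit scalar $\delta(\sigma,x)$ equals $1$, and the same identification of formula~(\ref{eq_parainvarianza}) with $\alpha(U)$ of Lemma~\ref{algebra} for $B_U=\{\sigma x,\sigma o\}$, $B_V=\{\sigma{\bf j}x,\sigma{\bf j}o\}$, followed by the computation $\det(\sigma_{_{B_U,B_V}})=1$. The only cosmetic difference is that you perform the transitivity reduction at the outset rather than at the end.
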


\begin{proof}
Fix $\sigma \in \SU(4)$ and  take   $x\in \mathcal{D}^{\perp}(o)\cap(\tau_{\sigma})_{*}^{-1}(\mathcal{D}^{\perp}(\sigma o)) $ (which forces $x=(x_1,x_2,0,0)$ under the identification $T_o\SS^7\subset\C^4$)   such that $g(x,x)=1$.  
Thus  $\{x, {\bf j}x,o,{\bf j}o\}$ is an orthogonal basis   of $\C^4$ (relative to $h$) and so is   $\{\sigma x ,\sigma o,\sigma {\bf j}x ,\sigma {\bf j}o \}$. We can apply  Lemma~\ref{algebra} to $B_U= \{\sigma x ,\sigma o   \} $ and $B_V= \{\sigma {\bf j}x ,\sigma {\bf j}o   \} $.
Observe that $\delta(\sigma,x)$ coincides, according to   Equation~(\ref{eq_parainvarianza}),   with
$$
\delta(\sigma,x)=\det\big(\tilde \sigma_{_{B_U,B_V}}\big)=\alpha(U)\det\big(  \sigma_{_{B_U,B_V}}\big),
$$
but
$$
\det\big(  \sigma_{_{B_U,B_V}}\big)=\det ( \sigma x\vert\,  \sigma{\bf j}x\vert\, \sigma o\vert\, \sigma{\bf j}o)=\det \sigma \det\left(
\begin{array}{cccc}
x_1&-\bar x_2&0&0\\
x_2&\bar x_1&0&0\\
0&0&0&-1\\
0&0&1&0
\end{array}
\right)=g(x,x)=1.
$$
Hence $ \delta(\sigma,x)=\alpha(U)$ is a nonnegative real number which belongs to $\SS^1$, that is, $\delta(\sigma,x)=1$. Hence
$\tau_{\sigma}^{*}(\Omega)_{o}=\Omega_{\sigma o}$ for every  $\sigma \in \SU(4)$.
Now the transitivity of the $\SU(4)$-action allows us to conclude that $\tau_{\sigma}^{*}(\Omega)_{z}=\Omega_{\sigma z}$ for any $z\in\SS^7$.

\end{proof}

\begin{remark}
{\rm The $3$-differential form $\Omega$ on $\SS^7$ can be thought as an extension of the determinant on $\xi_{1}^{\perp}(o)\cong \C^3$ to all the distribution $\xi_{1}^{\perp}$ (see (\ref{pajaros})).
}
\end{remark}

From $\Omega$ we introduce the following tensor  fields on $\SS^7$. For every $X,Y, Z\in \mathfrak{X}(\SS^7)$, the vector fields $\Theta (X,Y)$ and $\widetilde{\Theta}(X,Y)$ and the $(1,1)$-tensor fields $\Theta_{Y}$ and
$\widetilde{\Theta}_Y$ are given by
$$
g(\Theta (X,Y),Z)=g(\Theta_{Y}X,Z)=\Omega(X,Y,Z)
$$
and
$$
g(\widetilde{\Theta} (X,Y),Z)=g(\widetilde{\Theta}_{Y} X,Z)=\Omega(X,Y,\psi_{1}(Z)).
$$
Observe that $\widetilde{\Theta}(X,Y)=-\psi_{1}(\Theta (X,Y)).$
As a direct consequence of Proposition~\ref{invarianzaCris}, we obtain that
\begin{equation}\label{eq_lainvarianza cason3}
 \tau_{\sigma} (\Theta(X,Y))=\Theta ( \tau_{\sigma} (X), \tau_{\sigma} (Y))
%(\tau_{\sigma})_{*}(\Theta(X,Y))=\Theta ((\tau_{\sigma})_{*}(X),(\tau_{\sigma})_{*}(Y))
\end{equation}
  for all $X,Y\in \mathfrak{X}(\SS^7)$ and $\sigma\in\SU(4)$. Analogously   $\widetilde{\Theta}$ is also $\SU(4)$-invariant.
  For every $z\in \SS^7$ and every orthonormal basis  of $T_{z}\SS^7$  as in (\ref{basebonita}), a direct computation gives
  that the  operation $  \Theta_z$ on $T_{z}\SS^7 $ is given by $ \Theta_z(\xi_{1}(z),v)=0 $ for all $v\in T_{z}\SS^7$ and by the following table.

\begin{table}[htdp]\label{tabla}
\centering
\begin{tabular}{ c||c|c|c|c|c|c|}
$\Theta_z $ & $x$ & $\psi_{1}(x)$ & $\psi_{2}(x)$ & $\psi_{3}(x)$ & $\xi_{2}(z)$ & $\xi_{3}(z)$ \\
\hline\hline
$ x $ & $0$ & $0$ & $ -\xi_{2}(z)$ & $ \xi_{3}(z)$ & $ \psi_{2}(x) $ & $ - \psi_{3}(x)$ \\
\hline
$ \psi_{1}(x) $ & $0$ & $0$ & $\xi_{3}(z) $ & $ \xi_{2}(z)$ & $- \psi_{3}(x) $ & $- \psi_{2}(x) $ \\
\hline
$ \psi_{2}(x) $ & $ \xi_{2}(z)$ & $-\xi_{3}(z)  $ & $ 0$ & $0 $ & $-  x $ & $ \psi_{1}(x) $ \\
\hline
$ \psi_{3}(x) $ & $ -\xi_{3}(z)$ & $-\xi_{2}(z) $ & $0$ & $0$ & $ \psi_{1}(x)$ & $ x $ \\
\hline
$ \xi_{2}(z) $ & $ - \psi_{2}(x)$ & $ \psi_{3}(x) $ & $   x$ & $-  \psi_{1}(x) $ & $0$ & $0$ \\
\hline
$ \xi_{3}(z) $ & $  \psi_{3}(x)$ & $  \psi_{2}(x)$ & $- \psi_{1}(x) $ & $ - x$ & $0$ & $0$ \\
\hline
\end{tabular}\vspace{3pt}
\caption{Operation $  \Theta$}\label{tablatita}
\end{table}

\begin{remark}\label{operaciontheta}
{\rm Let $V$ be a finite dimensional real vector space $V$ endowed with a nondegenerate bilinear form $g$. Recall that a $2$-fold \emph{vector cross product} on $V$ is a bilinear map $P\colon V\times V\to V$ satisfying
$$
g(P(x,y),x)=g(P(x,y),y)=0
$$
and
$$
g(P(x,y), P(x,y))=\det\left(\begin{array}{cc}
 g(x,x) &    g(x,y)  \\
   g(y,x) &  g(y,y)
\end{array}\right)
$$
 for every $x,y \in V$  \cite{Gray}.
The operation $\Theta$ satisfies the first axiom of a $2$-fold vector cross on $\xi_{1}^{\perp}(z)$ but not the second one.}
\end{remark}

Finally, we introduce the $(0,2)$-tensor field $B$ on $\SS^7$ as follows,
$$
B(X,Y)=\mathrm{tr}(\Theta_{X}\circ \Theta_{Y}).
$$
It is obvious that $B$ is a symmetric tensor by taking into account that $g(\Theta_XY,Z)+g(Y,\Theta_XZ)=0$ for all $X,Y,Z\in\mathfrak{X}(\SS^7)$.
For every orthonormal basis $\mathcal{B} $ as in (\ref{basebonita}), we have $B(u,u)=-4$ for all $u \in \mathcal{B}\setminus\{\xi_{1}(z)\}$,
also $B(\xi_{1}(z) ,\xi_{1}(z) )=0 $
and    $B(u,v)=0$ for all   $u\ne v$ elements in $\mathcal{B}$. Thus, we get
\begin{equation}\label{tensorB}
B=4(\eta_{1} \otimes \eta_{1} -g).
\end{equation}

Now we will see how  the $3$-differential form $\Omega$ allows us to give explicit formulas for the new invariant connections coming from $\varepsilon_{1}$ and $\varepsilon_{\bf i}$.

\begin{lemma}\label{cocina7}
The bilinear maps on $T_{o}\SS^{7}$ corresponding with $\varepsilon_{1}$ and $\varepsilon_{\bf{i}}$ under the identification $\pi_{*}\colon \mathfrak{m}\to T_{o}\SS^{7}$  are given by
$$
\varepsilon_{1}((z,a),(w,b))=- \Theta_o((z,a),(w,b)),\qquad \varepsilon_{{\bf i}}((z,a),(w,b))= \widetilde{\Theta}_o ((z,a),(w,b)),
$$
for all $(z,a), (w,b) \in T_{o}\SS^{7} $.
\end{lemma}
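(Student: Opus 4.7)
The plan is to work at the origin $o$ and unwind the definitions of $\Theta_o$ and $\widetilde{\Theta}_o$ using the explicit formula~(\ref{pajaros}) for $\Omega_o$, which expresses $\Omega_o(u,v,w)$ as $-\mathrm{Re}(\det(u^h,v^h,w^h))$ on the horizontal part. The whole computation is linear, so I only need to match the two sides on arbitrary $(z,a),(w,b),(z',a')\in\mathfrak{m}$, noting that the horizontal part of $(z,a)$ under $\pi_*$ is simply $z\in\C^3$, and that the inner product on $\mathfrak{m}_1$ reads $g((y,0),(z',0))=\mathrm{Re}(y^t\overline{z'})$.

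First I would tackle $\Theta_o$. By the identity $\det(z,w,z')=(z')^t(z\times w)$ valid for the $\mathbb{C}$-bilinear cross product on $\C^3$, the definition $g(\Theta_o((z,a),(w,b)),(z',a'))=\Omega_o((z,a),(w,b),(z',a'))$ becomes
\begin{equation*}
\mathrm{Re}\bigl(Y^{t}\overline{z'}\bigr)=-\mathrm{Re}\bigl((z')^{t}(z\times w)\bigr),
\end{equation*}
where we write $\Theta_o((z,a),(w,b))=(Y,c)$. Since the right-hand side does not depend on $a'$, the vertical component $c$ vanishes (test with $z'=0$ and $a'\ne 0$). Splitting both sides into real and imaginary coordinates of $z'$ and matching the coefficients of $\mathrm{Re}(z'_i)$ and $\mathrm{Im}(z'_i)$ forces $Y_i=-\overline{(z\times w)_i}$; since the cross product of the $\mathbb{C}$-bilinear extension satisfies $\overline{z\times w}=\bar z\times \bar w$, one concludes $Y=-\bar z\times\bar w$, that is $\Theta_o=-\varepsilon_1$ under $\pi_*$.

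For $\widetilde{\Theta}_o$, the only extra ingredient is that at the origin the tensor $\psi_1$ acts by ambient multiplication by ${\bf i}$ on the horizontal slot: writing $(z',a')$ with $a'\in {\bf i}\mathbb{R}$, the tangential part of ${\bf i}(z',a')$ is $({\bf i}z',0)$, hence $\psi_1$ sends $(z',a')$ to $({\bf i}z',0)$. Then
\begin{equation*}
\Omega_o\bigl((z,a),(w,b),\psi_1(z',a')\bigr)=-\mathrm{Re}\bigl(\det(z,w,{\bf i}z')\bigr)=-\mathrm{Re}\bigl({\bf i}(z')^{t}(z\times w)\bigr),
\end{equation*}
and repeating the coordinate comparison yields $\widetilde{\Theta}_o((z,a),(w,b))=({\bf i}(\bar z\times\bar w),0)=\varepsilon_{\bf i}((z,a),(w,b))$.

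The only real obstacle is bookkeeping between the Hermitian inner product that underlies $g$ (producing the conjugation in $\mathrm{Re}(y^t\overline{z'})$) and the complex-bilinear dot product that appears in the determinant--cross product identity $\det(z,w,z')=(z')^t(z\times w)$. It is precisely this mismatch that introduces the conjugation bringing $z\times w$ into $\bar z\times\bar w$ and accounts for the sign discrepancy between $\varepsilon_1$ and $\Theta_o$; everything else is a routine linear identification. Because both sides are $\mathfrak{su}(3)$-equivariant bilinear maps, checking the identity at $o$ (via $\pi_*$) is all that is needed.
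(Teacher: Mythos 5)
Your proposal is correct and is essentially the paper's own argument read in the opposite direction: the paper computes $g(\varepsilon_{1}((z,a),(w,b)),(u,c))=\mathrm{Re}\,(h(\bar z\times\bar w,u))=\mathrm{Re}\,(\det(z,w,u))=-\Omega_{o}((z,a),(w,b),(u,c))$ using Equation~(\ref{pajaros}) and the same triple-product/conjugation bookkeeping, and treats $\varepsilon_{\bf i}$ analogously. Both routes rest on the identity $\det(z,w,u)=u^{t}(z\times w)$ together with $\overline{z\times w}=\bar z\times\bar w$, so there is nothing further to add.
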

\begin{proof} A direct computation taking into account Equation~(\ref{pajaros}) shows that
$$
\begin{array}{ll}
g\left(\varepsilon_{1}((z,a),(w,b)),(u,c)\right)&=\mathrm{Re}\left(h(\bar{z}\times \bar{w}, u)\right)=\mathrm{Re}\,(\overline{\mathrm{det}(z,w,u)}) \\
&=
\mathrm{Re}\,(\mathrm{det}(z,w,u))= - \Omega_{o}((z,a),(w,b),(u,c)).
\end{array}
$$
A similar argument works for $\varepsilon_{\bf{i}}$.  
\end{proof}

\begin{theorem}\label{principal7}
For every $\SU(4)$-invariant metric affine connection $\nabla$ on $\SS^{7}$, there are $q_{1},q_{2}\in\C$ and $t\in \R$ such that
$$
\begin{array}{ll}\vspace{3pt}
\nabla_{X}Y=&\nabla^{g}_{X}Y+(\mathrm{Re}(q_{1})-1)(\Phi_{1}(X,Y)\,\xi_{1}+\eta_{1}(Y)\psi_{1}(X))+
\mathrm{Im}(q_{1})(\nabla^{g}_{X}\psi_{1})(Y)\\
&+(t+\frac13)\eta_{1}(X)\psi_{1}(Y)-
\mathrm{Re}(q_{2})\Theta(X,Y)+\mathrm{Im}(q_{2})\widetilde{\Theta}(X,Y)
\end{array}
$$
for all $X,Y \in \mathfrak{X}(\SS^{7})$.
Moreover, $\nabla$ has totally skew-symmetric torsion if and only if  there are $ r \in \R$ and $q\in \C$ such that
\begin{equation}\label{skew7}
\begin{array}{ll}\vspace{3pt}
\nabla_{X}Y=&\nabla^{g}_{X}Y+ r \Big(\Phi_{1}(X,Y)\,\xi_{1}-\eta_{1}(X)\psi_{1}(Y)+\eta_{1}(Y)\psi_{1}(X)\Big)\\
&+ \mathrm{Re}(q)\Theta(X,Y)+\mathrm{Im}(q)\widetilde{\Theta}(X,Y).
 \end{array}
\end{equation}
\end{theorem}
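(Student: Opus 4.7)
The plan is to mimic the proof of Theorem~\ref{principal4}, now incorporating the two additional basis elements $\varepsilon_{1},\varepsilon_{\bf i}$ of $\Gamma_3$ available only in the case $n=3$. By Nomizu's correspondence and the characterization (\ref{mesa7}), the bilinear map $\alpha_{_\nabla}\in\Gamma_3$ attached to an arbitrary $\SU(4)$-invariant metric connection has the form (\ref{mesa7}), while Remark~\ref{LV} with $n=3$ identifies the Levi-Civita data as $\alpha_g=\alpha_{1}-\gamma_{1}-\tfrac13\beta_{1}$. As in the proof of Theorem~\ref{principal4}, the difference tensor $\mathcal{D}=\nabla-\nabla^{g}$ is $\SU(4)$-invariant and, under $\pi_{*}$, corresponds at the origin $o$ to $\alpha_g-\alpha_{_\nabla}$.

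First I would translate each basis element of $\Gamma_3$ into a tangent-space expression at $o$. Lemma~\ref{cocina} handles the five maps $\alpha_{1},\alpha_{\bf i},\beta_{1},\gamma_{1},\gamma_{\bf i}$; combined with the Sasakian identity $(\nabla^{g}_{X}\psi_{1})Y=g(X,Y)\xi_{1}-\eta_{1}(Y)X$ from (\ref{eq_propiedadesSasakiana}), the combination $\alpha_{\bf i}+\gamma_{\bf i}$ is recognized as (the negative of) $\nabla^{g}\psi_{1}$. The new ingredient is Lemma~\ref{cocina7}, which rewrites $\varepsilon_{1}$ and $\varepsilon_{\bf i}$ in terms of $\Theta_{o}$ and $\widetilde{\Theta}_{o}$. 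Collecting coefficients produces the stated formula at $o$ (up to the reparametrization of $q_{2}$ dictated by the signs in Lemma~\ref{cocina7}). To pass from $o$ to an arbitrary point, every tensor field on the right-hand side is $\SU(4)$-invariant: the Sasakian data $\xi_{1},\eta_{1},\psi_{1},\Phi_{1}$ by Lemma~\ref{blanco}, and the tensors $\Theta,\widetilde{\Theta}$ by Proposition~\ref{invarianzaCris} together with (\ref{eq_lainvarianza cason3}). Since $\SU(4)$ acts transitively on $\SS^{7}$ and $\mathcal{D}$ itself is $\SU(4)$-invariant, the pointwise identity at $o$ lifts to the announced global formula.

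For the second assertion, recall that a metric connection has totally skew-symmetric torsion if and only if $\mathcal{D}$ is skew-symmetric in its two arguments. The terms $\Phi_{1}(X,Y)\xi_{1}$, $\Theta(X,Y)$ and $\widetilde{\Theta}(X,Y)$ are already skew in $(X,Y)$ (being contractions of $2$- and $3$-forms), so they impose no constraint on $q_{2}$. Symmetrizing the remaining contributions in $(X,Y)$, after expanding $(\nabla^{g}_{X}\psi_{1})Y=g(X,Y)\xi_{1}-\eta_{1}(Y)X$, the symmetric part reads
\[
\bigl((\mathrm{Re}(q_{1})-1)+(t+\tfrac13)\bigr)\bigl(\eta_{1}(X)\psi_{1}(Y)+\eta_{1}(Y)\psi_{1}(X)\bigr)+\mathrm{Im}(q_{1})\bigl(2g(X,Y)\xi_{1}-\eta_{1}(X)Y-\eta_{1}(Y)X\bigr),
\]
whose vanishing forces $\mathrm{Im}(q_{1})=0$ and $\mathrm{Re}(q_{1})=1-t-\tfrac13$. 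Setting $r:=-(t+\tfrac13)$ and relabeling $q_{2}$ as $q$ (with the appropriate sign convention) yields exactly (\ref{skew7}).

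The main obstacle is not any single verification but the careful bookkeeping of signs when translating $\varepsilon_{1},\varepsilon_{\bf i}$ via Lemma~\ref{cocina7}; the nontrivial content of the globalization step is already packaged in Proposition~\ref{invarianzaCris}, which was the technically hardest input of the section.
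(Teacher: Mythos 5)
Your proposal is correct and follows essentially the same route as the paper's own proof: expressing $\alpha_{_\nabla}$ via (\ref{mesa7}), translating the basis of $\Gamma_3$ through Lemmas~\ref{cocina} and \ref{cocina7}, globalizing the $\SU(4)$-invariant difference tensor using Lemma~\ref{blanco} and (\ref{eq_lainvarianza cason3}), and then observing that $\Theta$ and $\widetilde{\Theta}$ are already skew-symmetric so that the skew-torsion condition only constrains $q_1$ and $t$ exactly as in Theorem~\ref{principal4}. The reparametrization of $q_2$ you mention is indeed just a harmless relabeling of a free parameter.
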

\begin{proof} The proof closely follows that of  Theorem~\ref{principal4} by using Equation~(\ref{mesa7}) for the expression of $\alpha_{_\nabla}$,  
 Lemmas~\ref{cocina} and \ref{cocina7} for writing    $\alpha_{_\nabla}$ in terms of the 3-Sasakian structure, and then the invariance property  in Lemmas~\ref{blanco} and Equation~(\ref{eq_lainvarianza cason3}) for extending the $\SU(4)$-invariant difference tensor $\mathcal{D}=\nabla-\nabla^g$.

In order to obtain the invariant connections with totally skew-symmetric torsion, keep in mind that $\Theta$ and $\widetilde{\Theta}$ are both skew-symmetric tensors.
\end{proof}

\begin{remark}\label{remark_conexG2}
{\rm The torsion tensors of the invariant connections in (\ref{skew7}) are given by
\begin{equation*}
\begin{array}{rl}\vspace{1pt}
T^{\nabla}(X,Y)=&2 r \left(\Phi_{1}(X,Y)\,\xi_{1}-\eta_{1}(X)\psi_{1}(Y)+\eta_{1}(Y)\psi_{1}(X)\right)\\
&+
2\big(\mathrm{Re}(q)\Theta(X,Y)+\mathrm{Im}(q)\widetilde{\Theta}(X,Y)\big)
\end{array}
\end{equation*}
for every $X,Y \in \mathfrak{X}(\SS^{7})$. The related $3$-differential form $\omega_{_{\nabla}}$ can be written as follows
$$
\omega_{_{\nabla}}=\frac12 r\, \eta_{1}\wedge d\eta_{1}+ \mathrm{Re}(q) \left( \eta_{2}\wedge d\eta_{2}- \eta_{3}\wedge d\eta_{3}\right)- \mathrm{Im}(q) \left(\eta_{2}\wedge d\eta_{3}+\eta_{3}\wedge d\eta_{2}\right).
$$
For an arbitrary $7$-dimensional $3$-Sasakian manifold $M$, the $3$-differential form $$\frac{1}{2}(\eta_{1}\wedge d\eta_{1}+\eta_{2}\wedge d\eta_{2}+\eta_{3}\wedge d\eta_{3})+4\,\eta_{1}\wedge \eta_{2}\wedge \eta_{3}$$ is called the \emph{canonical} $G_{2}$-\emph{structure} of $M$  \cite{AgriFri}. There exists a unique metric connection $\nabla^{G_2}$   preserving the $G_{2}$-structure with totally skew-symmetric torsion 
$T^{G_2}$. This is called
the \emph{characteristic connection} of the $G_2$-structure.
The $3$-differential form $\omega_{_{G_2}}$ corresponding with the torsion $T^{{\tiny{G_2}}}$ is written in terms of the $3$-Sasakian structure by $\eta_{1}\wedge d\eta_{1}+\eta_{2}\wedge d\eta_{2}+\eta_{3}\wedge d\eta_{3}$. Therefore, the family of connections given in (\ref{skew7}) does not contain the characteristic connection of the canonical $G_2$-structure of $\SS^7$.  (In other words, neither $ T^{{\tiny{G_2}}}$ nor  $ \nabla^{{\tiny{G_2}}}$ are $\SU(4)$-invariant.)}
\end{remark}

\subsection{ $(\SS^7,g,\nabla)$ which are  $\nabla$-Einstein}

\begin{corollary}\label{777}
Let $\nabla$ be a $\SU(4)$-invariant metric affine connection with totally skew-symmetric torsion on $\SS^{7}$ given by {\rm (\ref{skew7})}. Then, the symmetric part of the Ricci tensor $\mathrm{Ric}^{\nabla}$ and the scalar curvature $s^{\nabla}$ are given by
$$
\begin{array}{l}\vspace{2pt}
\mathrm{Sym}(\mathrm{Ric}^{\nabla})=(6- 2 r ^2-4 |q|^{2})\, g +4 (|q|^{2}-  r ^2) \,\eta_{1} \otimes \eta_{1}, \\
   s^{\nabla}=6(7-3\, r ^2- 4\, |q|^{2}).
\end{array}
$$
In particular, $(\SS^{7},g,\nabla)$ is $\nabla$-Einstein if and only if $|q|^{2}=  r^2$.
In this case $\mathrm{Sym}(\mathrm{Ric}^{\nabla})=6(1-r^2)\, g$.
\end{corollary}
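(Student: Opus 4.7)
The strategy mirrors that of Corollary~\ref{333}: we will compute the auxiliary tensor $S(X,Y)=\sum_{j}g(T^{\nabla}(e_j,X),T^{\nabla}(e_j,Y))$ and the squared norm $\|T^{\nabla}\|^{2}$, then recover $\mathrm{Sym}(\mathrm{Ric}^{\nabla})$ and $s^{\nabla}$ from the curvature identities (\ref{formulicas}), using that $\mathrm{Ric}^{g}=6g$ and $s^{g}=42$ on $\SS^{7}$.

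The key simplification is that $S$ is a $\SU(4)$-invariant symmetric bilinear form on $\mathfrak{m}\cong T_{o}\SS^{7}$. Since $\mathfrak{m}=\mathfrak{m}_{1}\oplus\mathfrak{m}_{2}$ as $\mathfrak{su}(3)$-modules with $\mathfrak{m}_{1}$ irreducible and $\mathfrak{m}_{2}$ one-dimensional (cf.\ Section~\ref{sec_esferasimpares}), the space of such invariant symmetric forms is two-dimensional, spanned by $g$ and $\eta_{1}\otimes\eta_{1}$. Hence $S=a\,g+b\,\eta_{1}\otimes\eta_{1}$, and it suffices to evaluate $S$ at $\xi_{1}$ and at any unit vector $x\in\xi_{1}^{\perp}$. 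Using the torsion expression in Remark~\ref{remark_conexG2} and the orthonormal frame (\ref{basebonita}) together with Table~\ref{tablatita}, the relations $\psi_{1}(\xi_{1})=0$ and $\Theta(\xi_{1},\cdot)=0=\widetilde{\Theta}(\xi_{1},\cdot)$ give $T^{\nabla}(e_{j},\xi_{1})=2r\,\psi_{1}(e_{j})$, whence $S(\xi_{1},\xi_{1})=4r^{2}\sum_{j}(1-\eta_{1}(e_{j})^{2})=24r^{2}$. For $x\perp\xi_{1}$ unit, summing $\|T^{\nabla}(e_{j},x)\|^{2}$ over the seven basis vectors using Table~\ref{tablatita} produces the contributions $0$, $4r^{2}$ (from $\psi_{1}x$), $4|q|^{2}$ each from $\psi_{2}x,\psi_{3}x,\xi_{2},\xi_{3}$, and $4r^{2}$ (from $\xi_{1}$), giving $S(x,x)=8r^{2}+16|q|^{2}$. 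Solving yields $a=8r^{2}+16|q|^{2}$ and $b=16r^{2}-16|q|^{2}$, so
\[
S=(8r^{2}+16|q|^{2})\,g+(16r^{2}-16|q|^{2})\,\eta_{1}\otimes\eta_{1}.
\]
Tracing gives $\|T^{\nabla}\|^{2}=\tfrac{1}{6}(7a+b)=12r^{2}+16|q|^{2}$, and the stated formulas for $\mathrm{Sym}(\mathrm{Ric}^{\nabla})$ and $s^{\nabla}$ follow from (\ref{formulicas}).

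For the Einstein condition, the trace-free part of $\mathrm{Sym}(\mathrm{Ric}^{\nabla})-\tfrac{s^{\nabla}}{7}g$ equals $4(|q|^{2}-r^{2})\bigl(\eta_{1}\otimes\eta_{1}-\tfrac{1}{7}g\bigr)$, which vanishes if and only if $|q|^{2}=r^{2}$, since $\eta_{1}\otimes\eta_{1}-\tfrac{1}{7}g\ne0$. Substituting $|q|^{2}=r^{2}$ in the formula for $\mathrm{Sym}(\mathrm{Ric}^{\nabla})$ gives $6(1-r^{2})\,g$, as claimed.

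The main obstacle is the bookkeeping for the contribution $S(x,x)$, where cross terms between the Sasakian part $2r(\Phi_{1}(\cdot,\cdot)\xi_{1}-\eta_{1}(\cdot)\psi_{1}(\cdot)+\eta_{1}(\cdot)\psi_{1}(\cdot))$ of the torsion and the $\Theta$-part $2\mathrm{Re}(q)\Theta+2\mathrm{Im}(q)\widetilde{\Theta}$ could a priori mix $r$ and $q$. However, for each basis vector $e_{j}$ in (\ref{basebonita}), Table~\ref{tablatita} shows that exactly one of the two contributions is nonzero (the Sasakian piece appearing only for $e_{j}\in\{\psi_{1}x,\xi_{1}\}$, the $\Theta$-piece only for the remaining four), so the cross terms vanish on the nose and $S$ splits cleanly as the sum of a pure-$r^{2}$ and a pure-$|q|^{2}$ contribution.
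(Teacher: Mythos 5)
Your proposal is correct and follows essentially the same route as the paper: both compute the auxiliary tensor $S$ and $\|T^{\nabla}\|^2$ by summing the torsion of (\ref{skew7}) over the orthonormal frame (\ref{basebonita}) and then apply the identities (\ref{formulicas}) with $\mathrm{Ric}^{g}=6g$, $s^{g}=42$; your values $S=(8r^2+16|q|^{2})g+(16r^2-16|q|^{2})\eta_1\otimes\eta_1$ and $\|T^{\nabla}\|^2=12r^2+16|q|^{2}$ agree with the paper's. The only (harmless) difference is that you pin down $S$ via the Schur-type observation that it must be a combination of $g$ and $\eta_1\otimes\eta_1$ and evaluate at two vectors, whereas the paper expands $S(x,y)$ directly using the tensor $B=4(\eta_1\otimes\eta_1-g)$ from (\ref{tensorB}).
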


\begin{proof} Take $z\in \SS^{7} $ and $\mathcal{B}$ an orthonormal basis of $T_z\SS^{}$ as in (\ref{basebonita}). Analysis similar   to that in the proof of Corollary~\ref{333}, taking now into account the expression for the tensor $B$ provided in (\ref{tensorB}), shows for any $x,y\in T_z\SS^{7}$,
$$
\begin{array}{l}
S(x,y)=  (8r^2+16|q|^{2})g(x,y)+(16r^2-16|q|^{2})\eta_{1}(x)\eta_{1}(y).
\end{array}
$$
Hence we get $S=8(r^2+2|q|^{2})g+16(r^2-|q|^{2})\eta_{1}\otimes\eta_{1}$ and analogously $\|T^{\nabla}\|^2=12 r ^2+ 16|q|^{2}$. Now the formulas for $\mathrm{Sym}(\mathrm{Ric}^{\nabla})$ and $s^{\nabla}$ are direct consequences of (\ref{formulicas}), taking into consideration that  $\mathrm{Ric}^{g}=6g$ and $s^g=42$ for the sphere $\SS^7$. Finally, Equation~(\ref{einstein}) holds if and only $|q|^{2}=  r^2$.
\end{proof}

\begin{remark}\label{flatEinstein}
{\rm It is a classical result by \'{E}. Cartan and J.A. Schouten that a Riemannian manifold $M$ which admits a flat metric connection with totally skew-torsion splits and each irreducible factor is either a compact simple Lie group or the sphere $\SS^7$ \cite{CartanSchouten} (see \cite{AgriFri2} for a new proof and more references). A family of   connections on  $\SS^7$ satisfying such properties  is introduced  in \cite{AgriFri2}. In that paper, Agricola and Friedrich use an explicit parallelization of $\SS^7$ by orthonormal Killing vector fields $V_{1},...,V_{7}$ and then define the flat metric connection with skew-torsion $D$ by $DV_{i}=0$ for $i=1,...,7$. Of course, every Riemann-Cartan manifold $(M,g,\nabla)$ such that $\nabla$ is a flat metric connection with totally skew-torsion is  $\nabla$-Einstein.   

We would like to point out that we have obtained a family of  $\nabla$-Einstein manifolds   $(\SS^7,g,\nabla)$ (obtained for $|q|^{2}=   r^2$ in Corollary~\ref{777}), most of them with nonflat connections.
In case $r\ne\pm1$,  the Ricci tensor is immediately nonzero, in particular the related connections (for all values of $q$) are not flat.   The curvature tensors for the cases  $r=\pm1$, in which we have proved that $\mathrm{Sym}(\mathrm{Ric}^{\nabla})=0$, exhibit a different behaviour. More precisely, let us see that $\nabla$ is nonflat whenever $r\ne1$, and flat for $r=1$.

Let us consider the quaternionic Hopf principal bundle $\kappa\colon\SS^7 \to \mathbb{H}P^1$ with structural group $\SP(1)$. Recall that $\mathbb{H}P^1$ is diffeomorphic to $\SS^4$, $\SP(1)$ is isomorphic to $\SS^3$ and $\kappa$ is a Riemannian submersion with totally geodesic fibres and vertical distribution denoted by $\mathcal{V}$.
Let $X\in \mathfrak{X}(\SS^7)$ be a horizontal vector field such that $g(X,X)=1$ on an open subset $\mathcal{O}\subset \SS^7$. For every connection $\nabla$ given in (\ref{skew7}), its curvature $ R^{\nabla}$ on $\mathcal{O}$ satisfies
\begin{equation}\label{curvatura7}
R^{\nabla}(\xi_{2},\xi_{3})X= 2({ r-|q|^{2}}) \psi_{1}(X).
\end{equation}
In order to check (\ref{curvatura7}), recall  that $[X,\xi_{s}]\in \mathcal{V}$  since $X$ is a basic vector field and $\xi_s$ is vertical for $\kappa$ (see, for instance, \cite[9.23]{Besse}). Thus $\nabla^{g}_{\xi_{s}}X=-\psi_{s}(X)$ for $s=1,2,3$. Now a direct computation from Table 1 shows that
$$
\nabla_{\xi_{3}}X= ( {\mathrm{Re}(q)} -1 )\psi_{3}(X)+ {\mathrm{Im}(q)} \psi_{2}(X)
$$
and
$$
\nabla_{\xi_{2}}\nabla_{\xi_{3}}X=- ( {|q|^2} +1-2\mathrm{Re}(q) )\psi_{1}(X)+2\mathrm{Im}(q) X.
$$
In a similar way, we obtain the formulas
$$
\begin{array}{l}\vspace{1pt}
\nabla_{\xi_{3}}\nabla_{\xi_{2}}X=  ( {|q|^2} +1+2\mathrm{Re}(q) )\psi_{1}(X)+2\mathrm{Im}(q) X,\\
 \nabla_{[\xi_{2},\xi_{3}]}X=-2(1+r)\psi_{1}(X),
 \end{array}
$$
so that Equation~(\ref{curvatura7}) follows. In particular, for $r=-1$ and $q\in\SS^1$, we have got a family of nonflat connections satisfying $\mathrm{Sym}(\mathrm{Ric}^{\nabla})=0$ (moreover, Ricci-flat). 
For $r=1$, the formula~(\ref{curvatura7}) does not enable us to obtain any conclusion about the flatness.
So, it is the moment to take advantage of   Nomizu's Theorem again and use the
formula~(\ref{cur})   to derive the complete expression of the curvatures of the $\SU(4)$-invariant affine connections on $\SS^7$ corresponding to $r=\pm1$ (and providing $\nabla$-Einstein manifolds).  We work in  $T_o \SS^7\equiv\mathfrak{m}$ and extend by invariance,
by omitting some tedious computations.

In case $r=1$ (any $q\in\SS^1$), for any $z,w,u\in \mathbb{C}^3$ and $a,b,c\in{\bf i} \mathbb{R}$,
$$
R^{\nabla}((z,a),(w,b))(u,c)=(\bar z\times  (w\times u)-\bar w\times(z\times u)+z(\bar w^tu)-w(\bar z^tu)-u(\bar w^tz-\bar z^tw),0)=(0,0),
$$
so that we have a family of flat connections.

In case $r=-1$ (any $q\in\SS^1$),
for $X_1,X_2,X_3\in \mathfrak{X}(\SS^7)$, the curvature tensor is
$$ 
R^{\nabla}(X_{1}, X_{2})X_{3}= 4 \sum_{i=1}^3\mu(X_{i},X_{i+1},X_{i+2}) 
 -4\big(\mathrm{Re}(q) \Omega(X_1,X_2,\psi_1(X_3))+\mathrm{Im}(q) \Omega(X_1,X_2, X_3)\big)\xi_1 , 
$$
where the indices are taken modulo 3, and the tensor $\mu$ is given by
$$
\mu(X,Y,Z):= \psi_{1}(X)\Phi_{1}(Y,Z)-\eta_{1}(X)\big(  \mathrm{Re}(q)\widetilde{\Theta}(Y,Z)+ \mathrm{Im}(q) \Theta(Y,Z)\big).
$$   
It is interesting to remark that $R^{\nabla}(X,Y,Z)=R^{\nabla}(Y,Z,X)$, so that $R^\nabla$ is in fact a nonzero  totally skew-symmetric tensor.
This situation never happens to the   Riemannian tensor curvature of a Levi-Civita connection of a nonflat Riemannian manifold $M$,
as the first Bianchi identity shows.

Note that the 
formula~(\ref{cur}) also gives   that the Ricci tensor is symmetric independently of $r$ ($|q|^{2}=   r^2$). Hence
 $\mathrm{Ric}^{\nabla} =6(1-r^2)\, g$ is positive definite if and only if $r\in(-1,1)$ and negative definite when $|r|>1$. These facts are summarized in the next table.}
\end{remark}

\begin{table}[h]
\centering
  
\begin{tabular}{||c|| c|c|c|c|c|}
\hline  
 \vrule width 0pt height 12pt
$r$ & $(-\infty,-1)$ &$-1$ &$(-1,1)$ &$1$ &$(1,\infty)$\\[0.5ex]
\hline  
\vrule width 0pt height 12pt
 Ric$^\nabla$ & $<0$ & 0 & $>0$ & 0 &$<0$\\[0.5ex]
 \hline
 &$\ne0$&$\ne0$   & $\ne0$ & 0  &$\ne0$\\[-1ex]
 \raisebox{1.5ex}{$R^\nabla$} &  &totally skew   & $R^g$ for $r=0$  & flat  & \\
   \hline
   \end{tabular}\vspace{3pt}
  \caption{ }
  \label{tablaRiccis}
\end{table}

%%%%%%%%%%%%%%%%%%%%%%%%%%%%%%%%%%%%%%%%%%%%%%%%%%%%%%%%%%%%%%%%%%%%%%%%%%%%%%%%%%%%%%%%%%%%%%%%%%%%%
%%%%%%%%%%%%%%%%%%%%%%%%%%%%%%%%%%%%%%%%%%%%%%%%%%%%%%%%%%%%%%%%%%%%%%%%%%%%%%%%%%%%%%%%%%%%%%%%%%%%%%%%%%%%%%%%%%%%%%%%%%%%%%%%%%%%%%%%%%%%%%%%%%5
%%%%%%%%%%%%%%%%%%%%%%%%%%%%%%%%%%%%%%%%%%%%%%%%%%%%%%%%%%%%%%%%%%%%%%%%%%%%%%%

\section{Invariant connections on $\SS^{5}$  }

In this case we will also find the invariant tensors which will provide a bigger collection of invariant connections.
Despite of that,  there will not be nontrivial  invariant connections with skew-torsion satisfying the Einstein equation (\ref{einstein}).

\subsection{Invariant metric connections on $\SS^{5}$}

This time
the module $\mathfrak{m}_1^\mathbb{C}\cong V\oplus V^*$, where $\mathfrak{m}_1$ is the horizontal part of $\mathfrak{m}$,  decomposes as a sum of two isomorphic irreducible modules, because the natural module $V\cong \mathbb{C}^2$ and its dual one are isomorphic. Indeed, one can use a nonzero fixed bilinear alternating map  $\det\colon \Lambda^2V\to\mathbb{C}$ for the identification, or, alternatively, recall (for instance, from \cite[7.2]{Draper:Humphreysalg}) that there is just one irreducible $\mathfrak{sl}(2,\mathbb{C})$-module of each dimension $m+1$, usually denoted by $V_m$.

\begin{lemma}\label{CrisS5}
$
\mathrm{dim}_{\R} \hom_{\mathfrak{su}(2)}(\mathfrak{m}\otimes\mathfrak{m},\mathfrak{m})=13.
$
\end{lemma}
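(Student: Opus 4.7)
The plan is to mimic the arguments of Lemmas~\ref{Cris} and \ref{CrisS7}: pass to the complexification, use the representation theory of $\mathfrak{sl}(2,\C)$ to decompose $\mathfrak{m}^{\mathbb{C}}\otimes \mathfrak{m}^{\mathbb{C}}$ and $\mathfrak{m}^{\mathbb{C}}$ into simple modules, and apply Schur's lemma to count the dimension of the space of homomorphisms. The equality
$$
\dim_{\mathbb{R}}\hom_{\mathfrak{su}(2)}(\mathfrak{m}\otimes\mathfrak{m},\mathfrak{m})=\dim_{\mathbb{C}}\hom_{\mathfrak{sl}(2,\mathbb{C})}(\mathfrak{m}^{\mathbb{C}}\otimes\mathfrak{m}^{\mathbb{C}},\mathfrak{m}^{\mathbb{C}})
$$
reduces the problem to this purely algebraic count.

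The key observation, as noted in the excerpt, is that for $\mathfrak{sl}(2,\mathbb{C})$ there is only one irreducible module of each dimension, so that the natural module $V=\mathbb{C}^2$ and its dual $V^*$ are isomorphic. Hence $\mathfrak{m}_1^{\mathbb{C}}\cong V\oplus V^*\cong 2V_1$, while $\mathfrak{m}_2^{\mathbb{C}}\cong V_0$ is trivial, using the standard notation $V_m$ for the unique $(m+1)$-dimensional irreducible $\mathfrak{sl}(2,\mathbb{C})$-module. Thus $\mathfrak{m}^{\mathbb{C}}\cong 2V_1\oplus V_0$.

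Next I would apply the Clebsch--Gordan rule $V_1\otimes V_1\cong V_2\oplus V_0$ together with $V_1\otimes V_0\cong V_1$ and $V_0\otimes V_0\cong V_0$ to expand
$$
\mathfrak{m}^{\mathbb{C}}\otimes \mathfrak{m}^{\mathbb{C}}\cong (2V_1\oplus V_0)\otimes (2V_1\oplus V_0)\cong 4V_2\oplus 4V_1\oplus 5V_0.
$$
By Schur's lemma applied componentwise against the target $\mathfrak{m}^{\mathbb{C}}\cong 2V_1\oplus V_0$, the module $V_2$ contributes $0$, the four copies of $V_1$ contribute $4\cdot 2=8$, and the five copies of $V_0$ contribute $5\cdot 1=5$. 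The total is $8+5=13$, which gives the claim.

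There is no real obstacle here; the only subtlety is that the bound of $7$ from Lemma~\ref{Cris} and of $9$ from Lemma~\ref{CrisS7} fails for $n=2$ precisely because $V\cong V^*$, so the two copies of $V$ in $\mathfrak{m}_1^{\mathbb{C}}$ merge into a single isotypic component of multiplicity two, and additionally the irreducible summands of $S^2V$ and $\Lambda^2 V$ coincide (up to isomorphism) with $V_0$ rather than being genuinely new. Tracking these extra multiplicities carefully is what produces the jump from $7$ (or $9$) to $13$.
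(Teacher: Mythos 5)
Your argument is correct and is essentially identical to the paper's proof: both complexify, identify $\mathfrak{m}^{\mathbb{C}}\cong 2V_1\oplus V_0$ using $V\cong V^*$ for $\mathfrak{sl}(2,\mathbb{C})$, apply Clebsch--Gordan to obtain $\mathfrak{m}^{\mathbb{C}}\otimes\mathfrak{m}^{\mathbb{C}}\cong 4V_2\oplus 4V_1\oplus 5V_0$, and count $4\cdot 2+5=13$ by Schur's lemma. The only (harmless) slip is in your closing aside: $S^2V_1\cong V_2$ rather than $V_0$; it is $\Lambda^2V_1$ alone that becomes trivial, but this does not affect the computation.
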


\begin{proof}
The decomposition of $\mathfrak{m}^\mathbb{C}$ as a direct sum of irreducibles modules is $2V_1\oplus V_0$, and in consequence 
$$
\mathfrak{m}^{\mathbb{C}}
\otimes\mathfrak{m}^{\mathbb{C}}\cong (4V_1\otimes V_1)\oplus (2V_1\otimes V_0)\oplus (2V_0\otimes V_1)\oplus (V_0\otimes V_0)
\cong 4V_2\oplus 4V_1\oplus 5V_0.
$$
Here there are 4 copies of $V\,(\cong V_1)$, 4 copies of $V^*$ (the same ones) and 5 copies of the trivial module, so that we can find $13$ linearly  independent homomorphisms from $\mathfrak{m}\otimes \mathfrak{m}$ to $ \mathfrak{m}$.
\end{proof}

In order to get an explicit basis of $\mathrm{Hom}_{\mathfrak{su}(2)}(\mathfrak{m}\otimes\mathfrak{m},\mathfrak{m})$, we only need an extra ingredient with respect to (\ref{eq_labase}). %A straightforward computation shows.

\begin{lemma}\label{invariantes5}
The map
$$\begin{array}{rccc}
\theta\colon &\mathbb{C}^2&\longrightarrow&\mathbb{C}^2\\
&z=\left(\begin{array}{l}z_1\\z_2\end{array}\right)&\mapsto &\theta(z)=\left(\begin{array}{l}-\bar z_2\\ \bar z_1\end{array}\right)
\end{array}
$$
is a homomorphism of $\mathfrak{su}(2)$-modules.
\end{lemma}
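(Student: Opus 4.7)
The plan is a direct computational verification, since $\theta$ is $\R$-linear and the statement is just that it commutes with the $\mathfrak{su}(2)$-action. Recall from the discussion after (\ref{lam}) that the module structure on $\mathfrak{m}_1\cong\C^2$ is the natural one: $B\cdot z=Bz$ for $B\in\mathfrak{su}(2)$ and $z\in\C^2$. So I need to check $\theta(Bz)=B\,\theta(z)$ for every such $B$ and $z$.

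I would parametrize a generic $B\in\mathfrak{su}(2)$ as
$$
B=\begin{pmatrix} ai & b \\ -\bar b & -ai\end{pmatrix},\qquad a\in\R,\ b\in\C,
$$
and compute both sides on $z=(z_1,z_2)^t$. On the left, $Bz=(aiz_1+bz_2,\,-\bar bz_1-aiz_2)^t$, hence
$$
\theta(Bz)=\bigl(\,\overline{\bar bz_1+aiz_2},\ \overline{aiz_1+bz_2}\,\bigr)^t=\bigl(b\bar z_1-ai\bar z_2,\ -ai\bar z_1+\bar b\bar z_2\bigr)^t.
$$
On the right, $B\,\theta(z)=B(-\bar z_2,\bar z_1)^t=\bigl(-ai\bar z_2+b\bar z_1,\ \bar b\bar z_2-ai\bar z_1\bigr)^t$. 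Both expressions agree, which finishes the proof.

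A cleaner conceptual packaging, if desired, is to observe that $\theta(z)=J\bar z$ for $J=\bigl(\begin{smallmatrix}0 & -1\\ 1 & 0\end{smallmatrix}\bigr)$; then $\theta(Bz)=J\bar B\bar z$ and $B\theta(z)=BJ\bar z$, so the claim reduces to the single matrix identity $\bar B=J^{-1}BJ$ for all $B\in\mathfrak{su}(2)$, which is immediate from the explicit form of $\mathfrak{su}(2)$ and encodes the well-known self-duality $V\cong V^{*}$ of the natural $\mathfrak{sl}(2,\C)$-module (the extra copy of $V(\lambda_1)$ responsible for the jump from $7$ to $13$ in Lemma~\ref{CrisS5}). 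No real obstacle is anticipated; the only subtle point to emphasize is that $\theta$, while antilinear over $\C$, is $\R$-linear and thus perfectly legitimate as a morphism of the real $\mathfrak{su}(2)$-modules under consideration.
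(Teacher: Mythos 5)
Your verification is correct: with the module action $B\cdot(z,a)=(Bz,0)$ recalled from Section~3, the identity $\theta(Bz)=B\theta(z)$ for $B=\left(\begin{smallmatrix} a\mathbf{i} & b\\ -\bar b & -a\mathbf{i}\end{smallmatrix}\right)$ checks out, and the reformulation $\bar B=J^{-1}BJ$ via $\theta(z)=J\bar z$ is a clean way to package it. The paper states this lemma without proof, so there is nothing to compare against; your argument is exactly the routine computation the authors leave to the reader.
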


Therefore, the 13 independent bilinear maps next exhibited provide a basis of the vector space $\Gamma_2 $
of   $\mathbb{R}$-bilinear maps $\alpha\colon\mathfrak{m}\times\mathfrak{m}\to\mathfrak{m}$ such that $\mathrm{ad}(\mathfrak{su}(2))\subset \mathfrak{der}(\mathfrak{m},\alpha)$:
\begin{equation*} 
\begin{array}{ll}\vspace{1pt}
\alpha_{1}((z,a),(w,b))=(bz,0), & \alpha_{\bf{i}}((z,a),(w,b))=({\bf i}\,bz,0),
\\ \vspace{1pt}
  \beta_{1}((z,a),(w,b))=(aw,0), & \beta_{\bf{i}}((z,a),(w,b))=({\bf i}\,aw,0),
\\ \vspace{1pt}
 \gamma_{1}((z,a),(w,b))=(0,{\bf i}\,\mathrm{Im}(\overline{z}^t w)),
& \gamma_{\bf{i}}((z,a),(w,b))=(0,{\bf i}\,\mathrm{Re}(\overline{z}^t w)),
\\ \vspace{1pt}
\hat\alpha_{1}((z,a),(w,b))=(b\theta(z),0), &\hat\alpha_{\bf{i}}((z,a),(w,b))=({\bf i}\,b\theta(z),0),
\\ \vspace{1pt}
\hat\beta_{1}((z,a),(w,b))=(a\theta(w),0), &\hat\beta_{\bf{i}}((z,a),(w,b))=({\bf i}\,a\theta(w),0),
\\ \vspace{1pt}
\hat\gamma_{1}((z,a),(w,b))=(0,{\bf i}\,\mathrm{Im}(\overline{\theta(z)}^t w)),
&\hat\gamma_{\bf{i}}((z,a),(w,b))=(0,{\bf i}\,\mathrm{Re}(\overline{\theta(z)}^t w)),
\\ \vspace{1pt}
\delta((z,a),(w,b))=(0,{\bf i}ab).
\end{array}
\end{equation*}

The following step is to compute when a bilinear map $\alpha\in \Gamma_2$ is attached to an affine connection compatible with the metric. First, note that this family  (obtained following Remark~\ref{nnn}) has seven free parameters.

\begin{lemma}\label{le_compatiblesmetricaS5}
$
\mathrm{dim}_{\R} \mathrm{Hom}_{\mathfrak{su}(2)}(\mathfrak{m},\mathfrak{m}\wedge\mathfrak{m})=7.
$
\end{lemma}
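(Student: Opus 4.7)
The plan is to mimic the argument of Lemma~\ref{CrisS5} but applied to the $\mathfrak{su}(2)$-module $\mathfrak{m}\wedge\mathfrak{m}$ in place of $\mathfrak{m}\otimes\mathfrak{m}$, exploiting the isomorphism of $\mathfrak{h}$-modules
\[
\mathrm{Hom}_{\mathfrak{su}(2)}(\mathfrak{m},\mathfrak{m}\wedge\mathfrak{m})\cong
\mathrm{Hom}_{\mathfrak{su}(2)^\C}(\mathfrak{m}^\C,\Lambda^2\mathfrak{m}^\C)
\]
recalled in Remark~\ref{nnn}, so that it is enough to compute the complex dimension of the right-hand side.

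First, I would recall the decomposition of $\mathfrak{m}^\C=\mathfrak{m}_1^\C\oplus\mathfrak{m}_2^\C$ as a direct sum of irreducible $\mathfrak{sl}(2,\C)$-modules, namely $\mathfrak{m}^\C\cong 2V_1\oplus V_0$, where $V_m$ denotes the unique (up to isomorphism) irreducible $\mathfrak{sl}(2,\C)$-module of dimension $m+1$ (as already used in the proof of Lemma~\ref{CrisS5}). Then, using the general rule
$\Lambda^2(U\oplus W)\cong\Lambda^2U\oplus(U\otimes W)\oplus\Lambda^2 W$
together with the well-known facts $\Lambda^2 V_1\cong V_0$ (since $V_1$ is $2$-dimensional), $V_1\otimes V_1\cong V_2\oplus V_0$ (Clebsch--Gordan), $V_1\otimes V_0\cong V_1$ and $\Lambda^2 V_0=0$, I would obtain
\[
\Lambda^2\mathfrak{m}^\C \;\cong\; V_0\oplus V_0\oplus(V_2\oplus V_0)\oplus V_1\oplus V_1 \;\cong\; V_2\oplus 2V_1\oplus 3V_0.
\]

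Finally, applying Schur's lemma, the dimension of $\mathrm{Hom}_{\mathfrak{sl}(2,\C)}(2V_1\oplus V_0,\,V_2\oplus 2V_1\oplus 3V_0)$ equals the sum, over each isomorphism class of simple module, of the product of the multiplicities in the source and target. Only $V_1$ and $V_0$ contribute, giving $2\cdot 2+1\cdot 3=7$, as claimed. The main (and really only) point that deserves care is the bookkeeping of Clebsch--Gordan in $\Lambda^2\mathfrak{m}^\C$, in particular isolating the symmetric and antisymmetric parts of $V_1\otimes V_1$ correctly; everything else is essentially dimension counting.
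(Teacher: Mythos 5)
Your proposal is correct and follows essentially the same route as the paper: complexify, decompose $\Lambda^2\mathfrak{m}^{\mathbb{C}}\cong V_2\oplus 2V_1\oplus 3V_0$, and count homomorphisms by Schur's lemma to get $2\cdot2+1\cdot3=7$. The only difference is that you spell out the Clebsch--Gordan bookkeeping that the paper leaves as "not difficult to check," and your bookkeeping is right (the cross term between the two copies of $V_1$ contributes the full $V_1\otimes V_1\cong V_2\oplus V_0$, while each $\Lambda^2V_1\cong V_0$).
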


\begin{proof}
After complexifying  $\mathfrak{m}^\mathbb{C}\cong 2V_1\oplus V_0$,
 it is not difficult to check that
$\mathfrak{m}^\mathbb{C}\wedge  \mathfrak{m}^\mathbb{C}\cong V_2\oplus2V_1\oplus 3V_0$.
Then, note that
$\dim \mathrm{Hom}_{\mathfrak{sl}(2,\C)}( 2V_1\oplus V_0, V_2\oplus2V_1\oplus 3V_0)=2\cdot2+1\cdot3=7$.

\end{proof}

 \begin{proposition}
A  $\SU(3)$-invariant affine connection $\nabla$ on $\SS^5$ is metric if and only if there are $q_{1},q_{2},q_{3}\in \C$ and $t\in \R$ such that the corresponding $\R$-bilinear map $\alpha_{_{\nabla}}\in \Gamma_{2}$ satisfies
\begin{equation}\label{mesa5}
\begin{array}{ll}
\alpha_{_\nabla}=&\mathrm{Re}(q_{1})(\alpha_{1}-\gamma_{1})+\mathrm{Im}(q_{1})(\alpha_{\bf{i}}+\gamma_{\bf{i}})+t\beta_{1}\\
&+\mathrm{Re}(q_{2})(\hat\alpha_1-\hat\gamma_1)+\mathrm{Im}(q_{2})(\hat\alpha_{\bf i}+\hat\gamma_{\bf i})+
\mathrm{Re}(q_{3})\hat\beta_1+\mathrm{Im}(q_{3})\hat\beta_{\bf i}.
\end{array}
\end{equation}
\end{proposition}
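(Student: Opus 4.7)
The plan is to mimic the proofs of Proposition~\ref{pr_mesa} and its $\SS^7$ analog. By Theorem~\ref{th_eldelascompatiblesconlametrica}, a $\SU(3)$-invariant affine connection $\nabla$ on $\SS^5$ is metric if and only if its associated bilinear map $\alpha_{_\nabla}\in\Gamma_2$ satisfies the skew-adjoint condition
\[
g(\alpha_{_\nabla}(C,A),B)+g(A,\alpha_{_\nabla}(C,B))=0\qquad\text{for all } A,B,C\in\mathfrak{m}.
\]
Since this is a linear condition, I only need to check it on the explicit 13-element basis of $\Gamma_2$ exhibited right after Lemma~\ref{invariantes5}, and then identify which linear combinations survive.

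First, I would treat the six ``unhatted'' generators and $\delta$ exactly as in the proof of Proposition~\ref{pr_mesa}. Using the expression $g((z,a),(w,b))=\mathrm{Re}(z^t\bar w)-ab$, the only metric-compatible combinations there are $\alpha_1-\gamma_1$, $\alpha_{\bf i}+\gamma_{\bf i}$ and $\beta_1$, producing three real parameters (i.e.\ $\mathrm{Re}(q_1),\mathrm{Im}(q_1),t$). For the six ``hatted'' generators I would use the conjugate-linear $\mathfrak{su}(2)$-equivariant map $\theta$ of Lemma~\ref{invariantes5}, together with the elementary identity
\[
\overline{\theta(z)}^{\,t}w \;=\; -\,z^{t}\,\overline{\theta(w)} \;=\; z_1w_2-z_2w_1,
\]
which simply reflects that $(z,w)\mapsto\overline{\theta(z)}^{\,t}w$ is a bilinear \emph{skew}-symmetric form on $\C^2$. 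A straightforward computation based on this identity shows that $\hat\alpha_1-\hat\gamma_1$ and $\hat\alpha_{\bf i}+\hat\gamma_{\bf i}$ are metric, and --- in contrast with what happens for $\beta_{\bf i}$ --- both $\hat\beta_1$ and $\hat\beta_{\bf i}$ are metric as well. This yields the four additional real parameters $\mathrm{Re}(q_2),\mathrm{Im}(q_2),\mathrm{Re}(q_3),\mathrm{Im}(q_3)$ in (\ref{mesa5}).

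At this point I would have exhibited seven linearly independent metric $\mathfrak{su}(2)$-invariant bilinear maps, which span precisely the family on the right-hand side of (\ref{mesa5}). To conclude, I invoke Lemma~\ref{le_compatiblesmetricaS5}, which guarantees that the real vector space of metric $\SU(3)$-invariant connections on $\SS^5$ has dimension exactly $7$. Consequently, the seven explicit generators exhaust the space, and every $\alpha_{_\nabla}$ coming from a metric connection has the claimed form.

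The only conceptually subtle point is the metric character of the hatted $\beta$-pair: while $\beta_{\bf i}$ fails the metric condition because $\mathrm{Re}(z_A^{t}\bar z_B)$ is symmetric in $A,B$, the conjugation built into $\theta$ flips a sign and converts this symmetric obstruction into a skew-symmetric (hence vanishing) one, making $\hat\beta_{\bf i}$ metric. Beyond this observation, everything reduces to bookkeeping with the explicit inner product on $\mathfrak{m}=\C^2\oplus{\bf i}\R$, and the dimension count from Lemma~\ref{le_compatiblesmetricaS5} is what ensures that no metric connection has been missed.
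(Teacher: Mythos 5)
Your proposal is correct and follows essentially the same route as the paper: verify that the seven exhibited combinations satisfy the skew-adjointness condition~(\ref{metric}) and then conclude by the dimension count $\dim_{\R}\hom_{\mathfrak{su}(2)}(\mathfrak{m},\mathfrak{m}\wedge\mathfrak{m})=7$ of Lemma~\ref{le_compatiblesmetricaS5} (via Remark~\ref{nnn}). Your explicit identity $\overline{\theta(z)}^{\,t}w=z_1w_2-z_2w_1$ and the resulting observation that both $\hat\beta_1$ and $\hat\beta_{\bf i}$ are metric, unlike $\beta_{\bf i}$, correctly supply the verification the paper leaves to the reader.
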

\begin{proof}
One can check that every map $\alpha_{_\nabla}$ as in Equation~(\ref{mesa5}) satisfies Equation~(\ref{metric}). The result follows from  Lemma \ref{le_compatiblesmetricaS5}.
\end{proof}

\begin{corollary}\label{torsion5} The torsion $T^{_{\nabla}}$ of the $\SU(3)$-invariant metric affine connection $\nabla$ on $\SS^5$ corresponding with the $\R$-bilinear map $\alpha_{_{\nabla}}\in \Gamma_2$ in {\rm (\ref{mesa5})} is characterized by
$$
\begin{array}{ll}\vspace{2pt}
T^{_{\nabla}}((z,a),(w,b))=&\left(\left(q_{1}-t-\frac{3}{2}\right)(bz-aw)+(q_{2}-q_{3})(b\,\theta(z)-a\,\theta(w)), 0 \right)
\\&+
\left(0, (\mathrm{Re}(q_{1})-1)(\overline{w}^tz-\overline{z}^tw)\right)-\left(0, 2{\bf i}\mathrm{Im}(q_{2}\overline{z}^{t}\theta(w))\right)
\end{array}
$$
for any $(z,a),(w,b)\in \mathfrak{m}$. In particular, the Levi-Civita connection  is achieved for $q_{1}=1$, $q_{2}=q_{3}=0$ and $t=-1/2$.
\end{corollary}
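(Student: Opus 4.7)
The plan is to apply directly the general torsion formula~(\ref{tor}) to the expression of $\alpha_{_\nabla}$ given in (\ref{mesa5}), and to compare the result with $[A,B]_\mathfrak{m}$, which for $n=2$ takes the form $[\,\,,\,\,]_\mathfrak{m}=\tfrac{3}{2}(\alpha_{1}-\beta_{1})-2\gamma_{1}$ (obtained either by specializing the formula  already used in the proof of Corollary~\ref{torsionn}, or by a direct bracket computation on the matrix realization~(\ref{lam}) of $\mathfrak{m}$). Thus the only real work is to compute the seven skew-symmetrizations $\varphi(A,B)-\varphi(B,A)$ for the seven generators appearing in (\ref{mesa5}), and to simplify.

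First I would handle the horizontal summand. Each of $\alpha_1,\alpha_{\bf i},\beta_1,\hat\alpha_1,\hat\alpha_{\bf i},\hat\beta_1,\hat\beta_{\bf i}$ is either of the form $(\mu bz,0)$ or $(\mu aw,0)$ with $\mu\in\{1,{\bf i}\}$, or the analogous one with $z,w$ replaced by $\theta(z),\theta(w)$; skew-symmetrizing pairs them up. Writing $\mathrm{Re}(q_1)+{\bf i}\,\mathrm{Im}(q_1)=q_1$ and $\mathrm{Re}(q_2)+{\bf i}\,\mathrm{Im}(q_2)=q_2$, the contributions collapse to $(q_1-t-\tfrac{3}{2})(bz-aw)+(q_2-q_3)(b\theta(z)-a\theta(w))$, where the $-\tfrac{3}{2}(bz-aw)$ comes from subtracting the horizontal part of $[A,B]_\mathfrak{m}$.

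For the vertical summand, note that $\gamma_{\bf i}$ and $\hat\gamma_{\bf i}$ are both symmetric up to sign reversal: from the conjugate-transposition identities $\bar w^t z=\overline{\bar z^t w}$ and $\overline{\theta(z)}^t w=-\overline{\bar z^t\theta(w)}$ (both of which follow by a one-line computation in coordinates from Lemma~\ref{invariantes5}), one gets $\gamma_1(A,B)-\gamma_1(B,A)=(0,2{\bf i}\,\mathrm{Im}(\bar z^t w))$ and analogously
\begin{align*}
\hat\gamma_1(A,B)-\hat\gamma_1(B,A)&=(0,2{\bf i}\,\mathrm{Im}(\bar z^t\theta(w))),\\
\hat\gamma_{\bf i}(A,B)-\hat\gamma_{\bf i}(B,A)&=(0,-2{\bf i}\,\mathrm{Re}(\bar z^t\theta(w))).
\end{align*}
Adding up the $\mathrm{Re}(q_2)$ and $\mathrm{Im}(q_2)$ weighted contributions gives precisely $-2{\bf i}\,\mathrm{Im}(q_2\,\bar z^t\theta(w))$; combining the $\mathrm{Re}(q_1)$ contribution with the vertical part $-2{\bf i}\,\mathrm{Im}(\bar z^t w)$ of $[A,B]_\mathfrak{m}$ and using once more $\bar w^t z-\bar z^t w=-2{\bf i}\,\mathrm{Im}(\bar z^t w)$ yields the stated term $(\mathrm{Re}(q_1)-1)(\bar w^t z-\bar z^t w)$.

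Finally, the Levi-Civita statement follows by imposing $T^\nabla\equiv 0$: the vertical term forces $\mathrm{Re}(q_1)=1$ and $\mathrm{Im}(q_2\,\bar z^t\theta(w))=0$ for all $z,w$, hence $q_2=0$ (since $\bar z^t\theta(w)$ takes every value in $\C$); then the horizontal term gives $q_3=0$, $\mathrm{Im}(q_1)=0$, and $t=q_1-\tfrac{3}{2}=-\tfrac{1}{2}$. No step is genuinely hard; the only place where one must be attentive is the identification of the two real-bilinear expressions $\mathrm{Re}(q_2)\,\mathrm{Im}(\bar z^t\theta(w))+\mathrm{Im}(q_2)\,\mathrm{Re}(\bar z^t\theta(w))=\mathrm{Im}(q_2\,\bar z^t\theta(w))$, which is what compacts the contributions of $\hat\gamma_1$ and $\hat\gamma_{\bf i}$ into a single imaginary part.
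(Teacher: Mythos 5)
Your proposal is correct and follows exactly the paper's route: the paper's own proof consists of recalling $[\,\cdot\,,\cdot\,]_{\mathfrak{m}}=\frac{3}{2}(\alpha_{1}-\beta_{1})-2\gamma_{1}$ and invoking (\ref{tor}), and you simply carry out the skew-symmetrizations that the paper leaves to the reader, with all signs and the key identification $\mathrm{Re}(q_2)\,\mathrm{Im}(\bar z^t\theta(w))+\mathrm{Im}(q_2)\,\mathrm{Re}(\bar z^t\theta(w))=\mathrm{Im}(q_2\,\bar z^t\theta(w))$ checking out. (Only a cosmetic quibble: $\gamma_{\bf i}$ is symmetric while $\hat\gamma_{\bf i}$ is antisymmetric, so the phrase ``both symmetric up to sign reversal'' is loose, but your explicit formulas for the skew parts are the correct ones.)
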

\begin{proof} Recalling that $[\,\,\, , \,\,\,]_{\mathfrak{m}}=\frac{3}{2}(\alpha_{1}-\beta_{1})-2\gamma_{1}$, then the proof is clear from (\ref{tor}).
\end{proof}

Again, in order to obtain the expressions for the metric invariant connections in this case, we have to consider a specific geometric structure in $\SS^5$.

Let us consider the Cayley numbers, or octonions,  $\mathbb{O}=\mathrm{Span}\{1,e_{1},...,e_{7}\}$, which is a real (nonassociative) division algebra with the product given by $e_1e_2=e_4$, also
$$
e_i^2=-1,\quad e_ie_j=-e_je_i\ (i\ne j),
 $$
 and, whenever $e_ie_j=e_k$, then $e_{i+1}e_{j+1}=e_{k+1}$ and $e_{2i}e_{2j}=e_{2k}$ (indices modulo 7).
 Fix the copy of the complex numbers   $\mathrm{Span}\{1,e_{7}\}$, so that $\mathbb{O}=\mathbb{C}\oplus \mathbb{C}\langle e_{1}, e_{2}, e_{4}\rangle$. 
 Thus, we identify $\C^3$ with $\mathbb{C}\langle e_{1}, e_{2}, e_{4}\rangle$ in a natural way mapping
\begin{equation}\label{idencomplexocto}
(z_{1},z_{2},z_{3}) \leftrightarrow z_{1}  e_{1}+z_{2}  e_{2}+z_{3}  e_{4}
\end{equation}
 where   $z_{l}\in \mathbb{R}\oplus \mathbb{R}e_{7}$ for $l=1,2,3$ and the juxtaposition denotes the product on $\mathbb{O}$.

Recall that an \emph{almost Hermitian structure} $(g,J)$ on a manifold $M$ is a Riemannian metric $g$ on $M$ and an almost complex structure $J$ such that $g(X,Y)=g(J(X),J(Y))$ for all $X,Y\in \mathfrak{X}(M)$. 
An almost Hermitian structure is said to be  \emph{nearly K\"{a}hler} whenever $(\nabla^{g}_{X}J)X=0$ for all $X\in \mathfrak{X}(M)$, where $\nabla^g$ denotes as usual the Levi-Civita connection of $g$. 
The standard example of nearly K\"{a}hler (non K\"{a}hler) manifold is the sphere $\SS^6$ as follows \cite[Chapter 4]{Blair}. Consider $\R^7$, in the natural way, as the imaginary part of the Cayley numbers $\mathbb{O}$, that is, $\mathrm{Im}(\mathbb{O})=\mathrm{Span}\{e_{1},...,e_{7}\}$. Consider the $2$-fold vector cross product on $\R^7$ given by $P(u,v)\equiv u\times v=\mathrm{Im}(u  v)=uv-\frac12\mathrm{tr}(uv)1$, for $\mathrm{tr}$ the trace given by $ \mathrm{tr}(u)=u+\bar u$.
Now consider the sphere $\SS^6$ in $\R^7$ with unit outward normal vector field $N$. Then an almost complex structure $J$ on $\SS^6$ is defined by $J(X)=P(N, X)=N\times X$ for every $X\in \mathfrak{X}(\SS^6)$. In this way $(\SS^6, g , J)$ is a nearly K\"{a}hler manifold.

Now we are in a position to describe the $\SU(3)$-invariant tensor which extends to the whole manifold $\SS^5$ the map $\theta$ given in Lemma~\ref{invariantes5} (details of the following construction can be consulted again in \cite[Chapter 4]{Blair}).
One starts from the totally geodesic embedding $\SS^{5}\to \SS^6$ defined by
$$
(z_{1},z_{2},z_{3})\in \SS^5\subset \C^3\mapsto (z_{1},z_{2},z_{3},0)\in \SS^6 \subset \mathrm{Im}(\mathbb{O})=\mathbb{C}\langle e_{1}, e_{2}, e_{4}\rangle\oplus\mathbb{R}e_7,
$$
which has unit normal vector field $\nu=-\frac{\partial}{\partial x_{7}}$. Thus, for every $z\in \SS^5$, the tangent space $T_{z}\SS^5$ can be seen as the hyperplane in $\mathrm{Im}(\mathbb{O})$ given by $x_{7}=0$. For $\SS^5$, the vector field $\xi$ already considered in (\ref{elcampo}) can be defined in an equivalent way as follows
$$
\xi_{z}=-e_{7}  (z_{1} e_{1}+z_{2}  e_{2}+z_{3}  e_{4})=-J(\nu_{z})
$$
for every $z=(z_{1},z_{2},z_{3})\in \SS^5$.

Recall that an  \emph{almost contact structure} on an odd dimensional manifold $M$ consists of a vector field $\xi'$, a $1$-form $\eta'$
and a field $\psi'$ of endomorphisms satisfying $\eta'(\xi')=1$ and $\psi'^2 =-\mathrm{Id}+\eta' \otimes \xi'$. When we have chosen a Riemannian metric $g$ such that $g(\psi'(X),\psi'(Y))=g(X,Y)-\eta'(X)\eta'(Y)$, the almost contact structure is said to be \emph{metric}.
From the above totally geodesic embedding of $\SS^5$ in $\SS^6$, we can induce an almost contact metric structure on $\SS^5$ different from the standard one underlying the Sasakian structure introduced in (\ref{sasaki}) (\cite[Example~4.5.3]{Blair}). For every $X\in \mathfrak{X}(\SS^5)$, the decomposition of $J(X)$ in tangent and normal components with respect to the above isometric embedding $\SS^5 \subset \SS^6$ determines the $(1,1)$-tensor field $\widehat{\psi}$  such that     
\begin{equation}\label{defTheta}
J(X)=\widehat{\psi}(X)+\eta(X)\nu,
\end{equation}
where   $\eta$ is the     $1$-differential form on $\SS^5$ already considered in  (\ref{sasaki}). 
Thus, $\xi$, $\eta$ and $\widehat{\psi}$ form another almost contact metric structure on $\SS^5$.  

\begin{lemma}
The endomorphism  $\widehat{\psi}_{o}\colon T_{o}\SS^5\to T_{o}\SS^5$ corresponds under the identification $\pi_{*}\colon\mathfrak{m}\to T_{o}\SS^5$ with the endomorphism of $\mathfrak{m}$ given by $(z,a)\mapsto(\theta(z),0)$.
\end{lemma}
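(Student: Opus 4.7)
The plan is to perform an explicit computation at the base point $o=(0,0,1)\in\SS^5$, carried out inside the octonion algebra via the identification (\ref{idencomplexocto}), which implicitly identifies $\mathbf{i}$ with $e_7$. Under this dictionary, $o$ corresponds to $e_4$, so the outward unit normal of $\SS^6$ at $o$ is $N_o=e_4$, while by construction $\nu=-e_7$. Given $(z,a)\in\mathfrak{m}$ with $z_l=a_l+\mathbf{i}\, b_l$ ($l=1,2$) and $a=\mathbf{i}\,t$, the identities $e_7e_1=e_3$, $e_7e_2=e_6$, $e_7e_4=e_5$ (read off the Fano plane) give
$$
\pi_*(z,a)=a_1e_1+b_1e_3+a_2e_2+b_2e_6+te_5\in T_o\SS^5\subset\mathrm{Im}(\mathbb{O}).
$$

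I would then compute $J(\pi_*(z,a))=N_o\times\pi_*(z,a)=e_4\cdot\pi_*(z,a)$ directly by octonion multiplication. Only the three Fano lines passing through $4$, namely $\{1,2,4\}$, $\{3,4,6\}$, $\{4,5,7\}$, intervene, yielding $e_4e_1=e_2$, $e_4e_2=-e_1$, $e_4e_3=-e_6$, $e_4e_6=e_3$ and $e_4e_5=e_7$, so that
$$
J(\pi_*(z,a))=-a_2e_1+a_1e_2+b_2e_3-b_1e_6+te_7.
$$

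Finally, I would isolate $\widehat{\psi}$ via the defining identity (\ref{defTheta}). Since $\xi_o=-\mathbf{i}o=-e_7e_4=-e_5$, the Euclidean inner product yields $\eta(\pi_*(z,a))=g(\pi_*(z,a),\xi_o)=-t$, so that $\eta(\pi_*(z,a))\,\nu=t\,e_7$ cancels exactly the $e_7$-component of $J(\pi_*(z,a))$, leaving
$$
\widehat{\psi}(\pi_*(z,a))=-a_2e_1+b_2e_3+a_1e_2-b_1e_6.
$$
Reading off the complex coordinates via (\ref{idencomplexocto}), the first is $-a_2+\mathbf{i}\, b_2=-\bar z_2$, the second is $a_1-\mathbf{i}\, b_1=\bar z_1$, and the third vanishes, which is precisely $(\theta(z),0)$. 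The only obstacle is the careful bookkeeping of the octonion multiplication table and the passage between real coordinates in $\mathrm{Im}(\mathbb{O})$ and complex coordinates in $\mathbb{C}^3$; no conceptual subtlety is expected.
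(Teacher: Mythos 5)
Your computation is correct and follows essentially the same route as the paper: evaluate $J$ at $o$ as left octonion multiplication by $e_4$, subtract the normal component $\eta(\cdot)\nu$, and translate the result back into complex coordinates via the identification of $\mathbb{C}^3$ with $\mathbb{C}\langle e_1,e_2,e_4\rangle$. The multiplication table entries and the final identification with $(\theta(z),0)$ all check out.
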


\begin{proof}
Let us consider $x=(z,a)\in \mathfrak{m}\equiv T_{o}\SS^5\subset \C^3$ given by $z=(z_{1},z_{2})$, $a=e_7t$ with $z_{l}=x_{l}+e_7y_{l}$ and $x_l,y_l,t\in\mathbb{R}$ for $l=1,2$. 
 Note that $o=(0,0,1)\in \C^3$ corresponds with $e_{4}$ under the identification (\ref{idencomplexocto}). Thus, it is easy to check that
\begin{equation*}
\begin{array}{ll}
\widehat{\psi}(x)&=J(x)-\eta(x)\nu_{o}=e_{4}  (z_{1}  e_{1}+z_{2}  e_{2}+t\, e_{7}    e_{4})-(-t)(-e_{7})\\
&=
  e_{4}  (x_{1}\,e_{1}+y_{1}\,e_{3}+x_{2}\,e_{2}+y_{2}\,e_{6}+t\,e_{5})-t\,e_{7}
  \\
  &=
x_{1}\, e_{2}-y_{1}\,e_{6}-x_{2}\, e_{1}+y_{2}\,e_{3}=(-x_{2}+y_{2}  e_{7})  e_{1}+(x_{1}-y_{1}  e_{7})  e_{2}\\
  &=-\overline{z}_{2}  e_{1}+\overline{z}_{1}  e_{2}.
\end{array}
\end{equation*}
\end{proof}

Then we can write the $\mathfrak{su}(2)$-module homomorphisms as follows.

\begin{corollary}\label{cocina5}
The bilinear maps on $T_{o}\SS^{5}$ corresponding with $\hat{\alpha}_{1},\hat{\alpha}_{\bf{i}},\hat{\beta}_{1},\hat{\beta}_{\bf{i}}, \hat{\gamma}_{1}$ and $\hat{\gamma}_{\bf{i}}$ under the identification $\pi_{*}\colon \mathfrak{m}\to T_{o}\SS^{5}$  are given respectively by
$$
\begin{array}{ll}
\hat{\alpha}_{1}(x,y)=-\eta(y)\psi(\widehat{\psi}(x)),\quad \qquad &\hat{\alpha}_{\bf{i}}(x,y)=\eta(y)\Big(\widehat{\psi}(x)-\eta(\widehat{\psi}(x))\xi_{o}\Big)=\eta(y)\widehat{\psi}(x),
\\
\hat{\beta}_{1}(x,y)=-\eta(x)\psi(\widehat{\psi}(y)),\quad &\hat{\beta}_{\bf{i}}(x,y)=\eta(x)\Big(\widehat{\psi}(y)-\eta(\widehat{\psi}(y))\xi_{o}\Big)=\eta(x)\widehat{\psi}(y),
\\
\hat{\gamma}_{1}(x,y)=\Phi(\widehat{\psi}(x),y)\, \xi_{o},\quad &\hat{\gamma}_{\bf{i}}(x,y)=-g(\psi(\widehat{\psi}(x)),\psi(y))\,\xi_{o}=-g(\widehat{\psi}(x),y)\,\xi_{o},
\end{array}
$$
for all $x,y \in T_{o}\SS^{2n+1} $.
\end{corollary}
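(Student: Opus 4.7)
The plan is to reduce the computation of each hatted bilinear map to the expressions of the unhatted ones already established in Lemma~\ref{cocina}, using the preceding lemma that identifies the endomorphism $\widehat{\psi}_o$ on $T_o\SS^5$ with the algebraic map $(z,a)\mapsto(\theta(z),0)$ on $\mathfrak{m}$.

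The key observation is that each hatted map differs from its unhatted counterpart only by the replacement $z\leftrightarrow \theta(z)$ in the first vector argument or $w\leftrightarrow\theta(w)$ in the second, while leaving the scalar slot untouched. Comparing the two lists of formulas gives the purely algebraic identities
$$
\hat\alpha_\bullet((z,a),(w,b))=\alpha_\bullet((\theta(z),0),(w,b)),\quad \hat\gamma_\bullet((z,a),(w,b))=\gamma_\bullet((\theta(z),0),(w,b)),
$$
and
$$
\hat\beta_\bullet((z,a),(w,b))=\beta_\bullet((z,a),(\theta(w),0)),
$$
(with $\bullet=1$ or ${\bf i}$). Translating through $\pi_*$, the previous lemma says precisely that this algebraic substitution corresponds to precomposing with $\widehat{\psi}$ in the appropriate slot. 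Therefore each formula in the statement follows by plugging $(\widehat{\psi}(x),y)$ or $(x,\widehat{\psi}(y))$ into the corresponding identity of Lemma~\ref{cocina}.

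The only remaining issue is the two simplifications inside the parentheses. Both come from the identity $\eta(\widehat{\psi}(x))=0$, valid for every $x\in T_o\SS^5$; algebraically this is immediate because the image of $(z,a)\mapsto(\theta(z),0)$ lies in the horizontal submodule $\mathfrak{m}_1$, which corresponds to $\mathcal{H}(o)=\xi_o^\perp$ under $\pi_*$. This vanishing removes the $-\eta(\widehat{\psi}(x))\xi_o$ terms in the formulas for $\hat\alpha_{\bf i}$ and $\hat\beta_{\bf i}$, and combined with the Sasakian compatibility $g(\psi(u),\psi(v))=g(u,v)-\eta(u)\eta(v)$ from~(\ref{eq_propiedadesSasakiana}) it turns $-g(\psi(\widehat{\psi}(x)),\psi(y))\xi_o$ into $-g(\widehat{\psi}(x),y)\xi_o$ in the expression for $\hat\gamma_{\bf i}$. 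The argument is thus a direct translation with no real obstacle; the only care needed is in tracking which variable is replaced in each of the six maps and in checking the sign conventions.
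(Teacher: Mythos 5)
Your proposal is correct and is precisely the argument the paper intends (the corollary is stated without an explicit proof, as an immediate consequence of the lemma identifying $\widehat{\psi}_o$ with $(z,a)\mapsto(\theta(z),0)$ together with Lemma~\ref{cocina}): each hatted map is the corresponding unhatted one precomposed with $\widehat{\psi}$ in the appropriate slot, and the simplifications follow from $\eta(\widehat{\psi}(x))=0$ and the Sasakian identity. The only cosmetic point is that $\beta_{\bf i}$ is not listed in Lemma~\ref{cocina}, but its expression $\beta_{\bf i}(x,y)=\eta(x)\bigl(y-\eta(y)\xi_o\bigr)$ is obtained at once from $\alpha_{\bf i}$ by swapping arguments, so your argument goes through unchanged.
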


In order to ensure that $\widehat{\psi}$ is $\SU(3)$-invariant, let us recall that $\SS^6$ can be identified with a coset of the exceptional Lie group
$$
G_{2}=\mathrm{Aut}(\mathbb{O})=\{f:\mathbb{O}\to \mathbb{O}: \text{f is a $\R$-linear isomorphism and }f(x  y)=f(x)  f(y)\}
$$
in the following way. The natural action $G_{2}\times \SS^6\to \SS^6$ given by $(f,z)\mapsto f(z)$, where we think $z\in \SS^6\subset\mathrm{Im}(\mathbb{O})$, is transitive and the isotropy group $H$ of the element $e_{7}$ can be identified with $\SU(3)$. Indeed, every $f\in H$ is the identity map on the fixed copy of the complex numbers $\mathrm{Span}\{1,e_{7}\}\subset \mathbb{O}$. Thus, $f$ is completely determined by its values on $\mathbb{C}\langle e_{1}, e_{2}, e_{4}\rangle$ which can be endowed with a Hermitian product $\sigma$ by means of (\ref{idencomplexocto}), so that $\sigma(u,v)=g(u,v)-e_7g(e_7u,v)$. The isotropy group $H=\{f\in \mathrm{Aut}(\mathbb{O}): f(e_7)=e_7\}$ is isomorphic to
\begin{equation*}\label{g2}
 \{f\colon \mathbb{C}\langle e_{1}, e_{2}, e_{4}\rangle\to\mathbb{C}\langle e_{1}, e_{2}, e_{4}\rangle:f\text{ $\mathbb{C}$-linear and }\sigma (f(x),f(y))=\sigma (x,y)\} 
 \cong\SU(3)
\end{equation*}
by the assignment $f\mapsto f\vert_{\mathbb{C}\langle e_{1}, e_{2}, e_{4}\rangle}$.
Therefore, the sphere $\SS^6$ can be seen as the homogeneous space $G_{2}/\SU(3)$ in   such a way that the action of $G_{2}$ on $\SS^6$ preserves both the metric $g$ and the almost complex structure $J$ (see, for instance, \cite[4.1]{refS6} and  references therein).

Turning now to the sphere $\SS^5$ as a totally geodesic hypersurface of $  \SS^6$, the natural action of $G_2$ on $\SS^6$ restricts to an action of the isotropy group $H\cong\SU(3)$ on $\SS^5$. This action agrees with the usual action of $\SU(3)$ on $\SS^5$ described in Section~\ref{sec_esferasimpares}. In particular, the tensor $\widehat{\psi}$ introduced in (\ref{defTheta}) is $\SU(3)$-invariant.

\begin{theorem}\label{principalS5}
For every $\SU(3)$-invariant metric affine connection $\nabla$ on $\SS^{5}$, there exist $q_{1},q_{2},q_{3}\in\C$ and $t\in \R$ such that
$$
\begin{array}{ll}\vspace{2pt}
\nabla_{X}Y=&\nabla^{g}_{X}Y+(\mathrm{Re}(q_1)-1)(\Phi(X,Y)\,\xi+\eta(Y)\psi(X))+
\mathrm{Im}(q_1)(\nabla^{g}_{X}\psi)(Y)\\ \vspace{2pt}
&+
\mathrm{Re}(q_{2})\big(\eta(Y)\psi(\widehat{\psi}(X))+\Phi(\widehat{\psi}(X),Y)\xi\big)+\mathrm{Im}(q_{2})\big(-\eta(Y)\widehat{\psi}(X)+g(\widehat{\psi}(X),Y)\xi\big)
\\
&+\mathrm{Re}(q_{3})\,\eta(X)\psi(\widehat{\psi}(Y))-\mathrm{Im}(q_{3})\,\eta(X) \widehat{\psi}(Y)+(t+1/2)\,\eta(X)\psi(Y)
\end{array}
$$
for all $X,Y \in \mathfrak{X}(\SS^{2n+1})$.
Moreover, $\nabla$ has totally skew-symmetric torsion if and only if  there are $ r \in \R$ and $q \in \C$ such that
$$
\begin{array}{ll}\vspace{1pt}
\nabla_{X}Y=&\nabla^{g}_{X}Y+ r \left(\Phi(X,Y)\,\xi-\eta(X)\psi(Y)+\eta(Y)\psi(X)\right)\\\vspace{2pt}
&+
\mathrm{Re}(q)\,\big(\eta(Y)\psi(\widehat{\psi}(X))-\eta(X)\psi(\widehat{\psi}(Y))+\Phi(\widehat{\psi}(X),Y)\xi\big)\\ \vspace{2pt}
&+\mathrm{Im}(q)\,\big(\eta(X)\widehat{\psi}(Y)-\eta(Y)\widehat{\psi}(X)+g(\widehat{\psi}(X),Y)\xi\big).
\end{array}
$$
\end{theorem}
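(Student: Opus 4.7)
The strategy closely parallels the proofs of Theorem~\ref{principal4} and Theorem~\ref{principal7}. My plan is to work with the difference tensor $\mathcal{D} = \nabla - \nabla^g$, which, being the difference of two $\SU(3)$-invariant affine connections, is itself $\SU(3)$-invariant and hence fully determined by its value at the base point $o \in \SS^5$. As in the proof of Theorem~\ref{principal4}, this value satisfies $\mathcal{D}(x,y) = \alpha_g(x,y) - \alpha_{_{\nabla}}(x,y)$ for $x,y \in T_o\SS^5 \equiv \mathfrak{m}$. By Corollary~\ref{torsion5} (setting $q_1=1$, $q_2=q_3=0$, $t=-1/2$) one has $\alpha_g = \alpha_1 - \gamma_1 - \frac{1}{2}\beta_1$, while $\alpha_{_\nabla}$ is given by the expansion (\ref{mesa5}) in the basis of $\Gamma_2$. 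Subtraction therefore produces an explicit linear combination of the basic bilinear maps $\alpha_1, \alpha_{\bf{i}}, \beta_1, \gamma_1, \gamma_{\bf{i}}, \hat\alpha_1, \hat\alpha_{\bf{i}}, \hat\beta_1, \hat\beta_{\bf{i}}, \hat\gamma_1, \hat\gamma_{\bf{i}}$.

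Next I would translate each basic bilinear map into geometric language using Lemma~\ref{cocina} (for the $\alpha_\bullet, \beta_\bullet, \gamma_\bullet$) and Corollary~\ref{cocina5} (for the $\hat\alpha_\bullet, \hat\beta_\bullet, \hat\gamma_\bullet$). The contributions from $q_1$ and $t$ reproduce verbatim the expression obtained in Theorem~\ref{principal4} at $n=2$ (so $1/n = 1/2$), yielding the first line of the statement; those coming from $q_2$ and $q_3$, after inserting the $\widehat{\psi}$-expressions, deliver exactly the remaining two lines. To extend the identity from $T_o\SS^5$ to every point of $\SS^5$, I invoke Lemma~\ref{blanco} for the $\SU(3)$-invariance of $\xi$, $\eta$, $\psi$, $\Phi$, together with the $\SU(3)$-invariance of $\widehat{\psi}$; the latter is guaranteed since the natural $G_2$-action on $\SS^6$ preserves the nearly K\"ahler almost complex structure $J$, and hence the isotropy subgroup $\SU(3) = \{f\in G_2 : f(e_7)=e_7\}$ preserves both sides of the decomposition $J(X) = \widehat{\psi}(X) + \eta(X)\nu$.

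For the second assertion, $\nabla$ has totally skew-symmetric torsion precisely when $\mathcal{D}$ is skew-symmetric in its two arguments. Symmetrizing the formula in $(x,y)$ and matching coefficients of $\SU(3)$-invariant tensors, the portion inherited from Theorem~\ref{principal4} forces $\mathrm{Im}(q_1) = 0$ (to kill the symmetric term $2g(x,y)\xi$ coming from $(\nabla^g_x\psi)(y)$) and $\mathrm{Re}(q_1)-1+t+\frac{1}{2} = 0$; setting $r := \mathrm{Re}(q_1)-1 = -t-\frac{1}{2}$ reproduces the first line of the skew-torsion formula. The new terms contribute the symmetric pieces $(\mathrm{Re}(q_2)+\mathrm{Re}(q_3))(\eta(y)\psi(\widehat{\psi}(x))+\eta(x)\psi(\widehat{\psi}(y)))$ and $-(\mathrm{Im}(q_2)+\mathrm{Im}(q_3))(\eta(y)\widehat{\psi}(x)+\eta(x)\widehat{\psi}(y))$, forcing $q_3 = -q_2$; the $g$-component $\mathrm{Im}(q_2)(g(\widehat{\psi}(x),y)+g(\widehat{\psi}(y),x))\xi$ vanishes automatically because $\widehat{\psi}$ is $g$-skew; and the $\Phi$-component $\mathrm{Re}(q_2)(\Phi(\widehat{\psi}(x),y)+\Phi(\widehat{\psi}(y),x))\xi$ vanishes provided $\psi$ and $\widehat{\psi}$ anticommute on $\xi^\perp$. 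Setting $q := q_2$ then delivers the stated formula.

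The main technical point is thus to verify that $\widehat{\psi}\circ\psi + \psi\circ\widehat{\psi} = 0$ on $\xi^\perp$, which by $\SU(3)$-invariance it is enough to check at $o$. Under the identifications $\mathfrak{m}_1 \cong \C^2$ and (\ref{idencomplexocto}), $\psi|_{\mathfrak{m}_1}$ acts as multiplication by $\mathbf{i}$ whereas $\widehat{\psi}|_{\mathfrak{m}_1}$ coincides with the map $\theta$ of Lemma~\ref{invariantes5}, so the required identity reduces to $\theta(\mathbf{i}z) = -\mathbf{i}\theta(z)$, that is, to the $\C$-antilinearity of $\theta$, which is immediate from its definition $\theta(z_1,z_2) = (-\bar z_2,\bar z_1)$. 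Once this is established, the proof is complete.
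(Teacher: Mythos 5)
Your proposal is correct and follows essentially the same route as the paper's proof: compute $\mathcal{D}=\alpha_g-\alpha_{_\nabla}$ at $o$ from (\ref{mesa5}), translate via Lemma~\ref{cocina} and Corollary~\ref{cocina5}, extend by invariance, and then impose skew-symmetry of $\mathcal{D}$ to force $\mathrm{Im}(q_1)=0$, $\mathrm{Re}(q_1)=1/2-t$ and $q_3=-q_2$. Your explicit verification that $\psi$ and $\widehat{\psi}$ anticommute on $\xi^\perp$ (equivalently, that $\theta$ is $\C$-antilinear) is a detail the paper leaves implicit in its ``this clearly forces'' step, and it is checked correctly.
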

\begin{proof}
Since $\nabla^{g}$ is $\SU(3)$-invariant, the affine connection $\nabla$ is $\SU(3)$-invariant if and only if the difference tensor $\mathcal{D}=\nabla-\nabla^{g}$ so is. %is also $\SU(3)$-invariant.
According to the expression of $\alpha_{_{\nabla}}$ in (\ref{mesa5})  and by Lemma~\ref{cocina},  we get for every $x,y\in T_{o}\SS^{5}$,
$$
\begin{array}{rl}\vspace{1.5pt}
\mathcal{D}(x,y)=&L^{g}(x,y)-L^{\nabla}(x,y)=\alpha_{g}(x,y)-\alpha_{_{\nabla}}(x,y)
\\\vspace{2pt}
=&(\mathrm{Re}(q_{1})-1)(\Phi(x,y)\,\xi_{o}+\eta(y)\psi(x))+
\mathrm{Im}(q_{1})(\nabla^{g}_{x}\psi)(y)+(t+1/2)\eta(x)\psi(y)
\\\vspace{2pt}
& +\mathrm{Re}(q_{2})\big(\eta(y)\psi(\widehat{\psi}(x))+\Phi(\widehat{\psi}(x),y)\xi_{o}\big)+\mathrm{Im}(q_{2})\big(-\eta(y)\widehat{\psi}(x)+g(\widehat{\psi}(x),y)\xi_{o}\big)
\\\vspace{1pt}
& +\mathrm{Re}(q_{3})\eta(x)\psi(\widehat{\psi}(y))-\mathrm{Im}(q_{3})\eta(x) \widehat{\psi}(y).\end{array}
$$

Notice that the connection  $\nabla$ has totally skew-symmetric torsion if and only if the difference tensor $\mathcal{D}$ is skew-symmetric. This clearly forces $\mathrm{Im}(q_{1})=0$, $\mathrm{Re}(q_{1})=1/2-t$ and $q_{2}=-q_{3}$. The proof is completed by taking $ r =-t-1/2$ and $q=q_{2}$.
\end{proof}

As always, the torsion tensors of the metric affine connections with totally skew-symmetric torsion given in Theorem \ref{principalS5} are given by
$
T^{\nabla}(X,Y)=2\,(\nabla_{X}Y-\nabla^{g}_{X}Y)$.

\subsection{ $(\SS^5,g,\nabla)$ which are $\nabla$-Einstein    }

\begin{corollary}\label{555}
Let $\nabla$ be a $\SU(3)$-invariant metric affine connection with totally skew-symmetric torsion on $\SS^{5}$ described in Theorem \ref{principalS5}. Then, the symmetric part of the Ricci tensor $\mathrm{Ric}^{\nabla}$ is given by
$$
\begin{array}{l}
\mathrm{Sym}(\mathrm{Ric}^{\nabla})=  (4-2(r^2+|q|^2))\, g-2\, (r^2+|q|^2)\,\eta \otimes \eta.
\end{array}
$$
In particular,   $(\SS^5,g,\nabla)$ is not $\nabla$-Einstein for any $\SU(3)$-invariant metric affine connection unless $\nabla=\nabla^g$.
\end{corollary}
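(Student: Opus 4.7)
The strategy closely mirrors the proofs of Corollaries~\ref{333} and~\ref{777}: extract the torsion tensor $T^{\nabla}$ from Theorem~\ref{principalS5}, compute the symmetric $(0,2)$-tensor $S(x,y)=\sum_{j}g(T^{\nabla}(e_{j},x),T^{\nabla}(e_{j},y))$ in a convenient orthonormal basis, and then apply the Agricola--Friedrich identity $\mathrm{Sym}(\mathrm{Ric}^{\nabla})=\mathrm{Ric}^{g}-\frac14 S$ from~(\ref{formulicas}). Since $\SS^{5}$ has constant sectional curvature $1$, one uses $\mathrm{Ric}^{g}=4g$.

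The first step is to write the torsion explicitly as $T^{\nabla}(X,Y)=2(\nabla_{X}Y-\nabla_{X}^{g}Y)$, obtaining a sum of six summands: the three $r$-terms (already appearing in Remark~\ref{111}) plus the three $q$-terms involving $\widehat{\psi}$. The new ingredient with respect to the cases $n\ge4$ and $n=3$ is the presence of $\widehat{\psi}$. Before computing $S$, I would record the algebraic identities that will be needed:
$\widehat{\psi}$ is an isometry of $\xi^{\perp}$ which commutes with $\eta$ trivially ($\widehat{\psi}(\xi)=0$, $\eta\circ\widehat{\psi}=0$) and anticommutes with $\psi$ on $\xi^{\perp}$, that is,
\begin{equation*}
\widehat{\psi}^{2}=-\mathrm{Id}+\eta\otimes\xi,\qquad \widehat{\psi}\circ\psi=-\psi\circ\widehat{\psi}\ \text{ on }\ \xi^{\perp},\qquad g(\widehat{\psi}(X),Y)+g(X,\widehat{\psi}(Y))=0.
\end{equation*}
These are immediate by passing, via $\pi_{*}$, to the model $\theta(z)=(-\bar z_{2},\bar z_{1})$ at the origin, and are then transported to every point using the $\SU(3)$-invariance of $\widehat{\psi}$.

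To evaluate $S$ at an arbitrary $p\in\SS^{5}$, the natural choice is the orthonormal basis
\begin{equation*}
\{e_{1},\psi(e_{1}),\widehat{\psi}(e_{1}),\psi(\widehat{\psi}(e_{1})),\xi_{p}\}
\end{equation*}
for any unit $e_{1}\in\xi^{\perp}(p)$, which exists because $\widehat{\psi}$ is an isometry on $\xi^{\perp}$ that anticommutes with $\psi$ and thus sends the complex line spanned by $\{e_{1},\psi(e_{1})\}$ to an orthogonal complex line. On such a basis, the $r$-contribution reproduces (up to trivial changes) the calculation already carried out in Corollary~\ref{333}, yielding $8r^{2}g+8r^{2}\eta\otimes\eta$ for the dimension $n=2$; and a symmetric computation for the $q$-terms, using the identities above, produces $8|q|^{2}g+8|q|^{2}\eta\otimes\eta$. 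The crucial cross-terms between the $r$-summands and the $q$-summands all vanish after summation: this is the step where the anticommutativity $\widehat{\psi}\psi=-\psi\widehat{\psi}$ on $\xi^{\perp}$ is used, and it is the main technical obstacle, since a priori one expects mixed contributions proportional to $\mathrm{Re}(q)$ and $\mathrm{Im}(q)$. Combining both contributions gives $S=8(r^{2}+|q|^{2})g+8(r^{2}+|q|^{2})\eta\otimes\eta$, whence $\mathrm{Sym}(\mathrm{Ric}^{\nabla})=4g-\frac14 S$ yields the stated formula.

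For the final assertion, the trace of $\mathrm{Sym}(\mathrm{Ric}^{\nabla})$ equals $s^{\nabla}=20-12(r^{2}+|q|^{2})$ so the $\nabla$-Einstein equation~(\ref{einstein}) reads
\begin{equation*}
\bigl(4-2(r^{2}+|q|^{2})\bigr)g-2(r^{2}+|q|^{2})\,\eta\otimes\eta=\tfrac{1}{5}\bigl(20-12(r^{2}+|q|^{2})\bigr)g.
\end{equation*}
Since $g$ and $\eta\otimes\eta$ are linearly independent as symmetric $(0,2)$-tensors, matching the coefficient of $\eta\otimes\eta$ forces $r^{2}+|q|^{2}=0$, i.e.\ $r=0$ and $q=0$, which by Theorem~\ref{principalS5} corresponds exactly to $\nabla=\nabla^{g}$.
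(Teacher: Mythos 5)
Your proposal is correct and follows essentially the same route as the paper: the paper's proof simply computes $S=8(r^{2}+|q|^{2})(g+\eta\otimes\eta)$ ``following the lines of the proof of Corollary~\ref{333}'' and then applies~(\ref{formulicas}) with $\mathrm{Ric}^{g}=4g$, exactly as you do. Your version merely makes explicit the algebraic identities for $\widehat{\psi}$, the adapted orthonormal basis, and the vanishing of the $r$--$q$ cross-terms, all of which check out.
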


\begin{proof} Following the lines of the proof of Corollary \ref{333}, for any $z\in\SS^5$ and any $x,y\in T_z\SS^{5}$, we have
$$
S(x,y)=  8(r^2+|q|^{2})\big(g(x,y)+\eta(x)\eta(y)\big).
$$
The announced formulas for $\mathrm{Sym}(\mathrm{Ric}^{\nabla})$ are now deduced from (\ref{formulicas}). Then, it is not difficult to check that the tensor $\mathrm{Sym}(\mathrm{Ric}^{\nabla})$ is proportional to $g$ if and only if $r=q=0$, which corresponds with the Levi-Civita connection $\nabla^g$.
\end{proof}

\begin{remark}
{\rm   Bobie\'nski and Nurowski have also studied in \cite{casiS5perono} connections with skew-symmetric torsion in a $\SU(3)$-homogeneous space of dimension 5, namely, the irreducible symmetric space 
  $\SU(3)/\SO(3)$, called Wu space  (enclosed in their program on irreducible $\SO(3)$-geometry in dimension five).  
  Note that the Lie algebra of $\SO(3)$ is isomorphic to our fixed $\mathfrak{h}\cong\mathfrak{su}(2)$, but they are not conjugated as a subalgebras of $\mathfrak{su}(3)$ (our complement $\mathfrak{m}$ is not $\mathfrak{h}$-irreducible). That is, the same  $\mathfrak{g}$ and ``similar'' $\mathfrak{h}$ are related to completely different homogeneous spaces, even topologically (the sphere $\SS^5$ and the Wu space).
}
\end{remark}

%%%%%%%%%%%%%%%%%%%%%%%%%%%%%%%%%%%%%%%%%%%%%%%%%%%%%%%%%%%%%%%%%%%%%%%%%%%%%%%%%%%%%%%%%%%%%%%%%%%%%%%%%%%%%%%%%%%%%%%%%%%%%%%%%%%%%%%%%%%%%%%%%%5
%%%%%%%%%%%%%%%%%%%%%%%%%%%%%%%%%%%%%%%%%%%%%%%%%%%%%%%%%%%%%%%%%%%%%%%%%%%%%%%

\section{Invariant connections on $\SS^{3}$  }

The sphere $\SS^3$ is the only sphere (besides $\SS^1$) which can be endowed with a Lie group structure. Namely, it is diffeomorphic to the Lie group $\SU(2)$ through the map
$$
\SS^3 \to \SU(2), \quad (z,w)\in \SS^3  \mapsto \left( \begin{array}{cc}w&-\bar{z}\\ z& \bar{w}\end{array} \right)\in \SU(2).
$$
Thus, the point $o=(0,1)\in \SS^3$ corresponds to $\mathrm{I}_2\in \SU(2)$.
Observe that the tensor $g$ is a biinvariant metric tensor. As the isotropy group   is trivial,   the reductive decomposition (\ref{ttt})  is
$$
\mathfrak{g}=\mathfrak{m}=\mathfrak{su}(2)
=\left\{\left( \begin{array}{cc}-a&z\\-\bar z&a\end{array} \right): z\in\mathbb{C},a\in{\bf i}\mathbb{R}    \right\}\cong \mathbb{C}\oplus{\bf i}\mathbb{R}
$$
and $\mathfrak{h}=0$. Recall that, under the identification $\pi_{*}$, the Lie algebra $\mathfrak{su}(2)$ is identified with $T_{o}\SS^3$.
The Lie algebra $\mathfrak{su}(2)$ is the real span of the   traceless antihermitian matrices 
$$
E_{1}=\left( \begin{array}{cc} 0 & 1\\ -1 & 0 \end{array} \right), \quad E_{2}=\left( \begin{array}{cc} 0 & {\bf i}\\ {\bf i} & 0 \end{array} \right),\quad E_{3}=\left( \begin{array}{cc} -{\bf i} & 0\\ 0 & {\bf i} \end{array} \right),
$$
with Lie brackets given by $[E_{i},E_{i+1}]=-2E_{i+2}$ (indices modulo 3). We also denote by $E_{1},E_{2},E_{3}\in \mathfrak{X}(\SS^3)$ the corresponding left invariant vector fields, which provide a global orthonormal frame on $\SS^3$. Note that $E_{3}=-\xi$ and $\psi(E_{1})=E_{2}$.

Now, the required homomorphisms of $\mathfrak{h}$-modules in Theorem~\ref{nomizu} are simply homomorphisms of vector spaces. Thus, we have that $\dim_\mathfrak{h}\hom(\mathfrak{m}\otimes\mathfrak{m},\mathfrak{m})=(\dim\mathfrak{m})^3=27$ and each bilinear map 
$\alpha\colon\mathfrak{m}\times\mathfrak{m}\to \mathfrak{m}$ corresponds with an invariant affine connection on $\SS^3$.
The metric affine connections are in one-to-one correspondence with the $9$-dimensional vector space $\hom(\mathfrak{m},\mathfrak{so}(\mathfrak{m},g))$.
In order to provide a description of such connections,
consider the following $(1,1)$-tensor fields on $\SS^3$,
$$
\sigma^{3}=E_{1}^{\flat}\otimes E_{2}-E_{2}^{\flat}\otimes E_{1}, \quad \sigma^{1}=E_{2}^{\flat}\otimes E_{3}-E_{3}^{\flat}\otimes E_{2}, \quad \sigma^{2}=E_{3}^{\flat}\otimes E_{1}-E_{1}^{\flat}\otimes E_{3},
$$
which provide, when being evaluated at $o$, endomorphisms of $\mathfrak{m}$ under the identification $\pi^*$.
It is clear that $\mathfrak{so}(\mathfrak{m},g)=\mathrm{Span}\{\sigma^{1}_o  ,\sigma^{2}_o ,\sigma^{3}_o \}$ and $\mathrm{ad}_{E_{l}}=-2\sigma^{l}_o $ for $l=1,2,3$. Moreover, every $\sigma^{l}$ is $\SU(2)$-invariant since   $E_{l}$ is a left-invariant  vector field.
Also, let us consider $f_{ij}\colon \mathfrak{m}\to\mathfrak{so}(\mathfrak{m},g)$ the linear maps given by $f_{ij}(E_{s})=\delta_{is}\sigma^j_o $ for all $i,j,s\in \{1,2,3\}$ (where $\delta$ is the Kronecker  delta). The $(1,2)$-tensor fields on $\SS^3$ given by
 $F_{ij}(E_{s})=\delta_{is}\sigma^j$   are the key tools to describe the metric invariant connections on $\SS^3$. In fact, a $\mathbb{R}$-bilinear map $\alpha_{_{\nabla}}\colon\mathfrak{g}\times\mathfrak{g}\to\mathfrak{g}$ is attached to a metric affine connection $\nabla$ if and only if there are real numbers $t_{_{ij}}$ with $i,j\in \{1,2,3\}$ such that
 $$
 \alpha_{_{\nabla}}(x,y)=\sum_{l=1}^{3}E_{l}^{\flat}(x)\left(\sum_{i,j=1}^{3}t_{_{ij}}f_{_{ij}}(E_{l})(y)\right)=\sum_{i,j=1}^{3}t_{_{ij}}E_{i}^{\flat}(x)\sigma^{j}_o(y), 
 $$
 for all $x,y \in \mathfrak{g}$.
Since the Levi-Civita connection corresponds to $\alpha_g=\alpha_1-\delta_1-\beta_1$ as in Remark \ref{LV},
the difference tensor between $\nabla$ and   $\nabla^{g}$ satisfies
\begin{equation*}\label{eq_1deS3}
\mathcal{D}(X,Y)=-\eta(Y)\psi(X)-\Phi(X,Y)\xi+\eta(X)\psi(Y)-\sum_{i,j=1}^{3}t_{_{ij}}E_{i}^{\flat}(X)\sigma^{j}(Y)
\end{equation*}
for every $X,Y\in \mathfrak{X}(\SS^3)$.
In order to obtain the invariant connections $\nabla$ which have totally skew-symmetric torsion, a direct computation shows that
$$
\mathcal{D}(E_{1},E_{1})=t_{12}E_{3}-t_{13}E_{2}, \quad \mathcal{D}(E_{2},E_{2})=-t_{21}E_{3}+t_{23}E_{1}, \quad \mathcal{D}(E_{3},E_{3})=t_{31}E_{2}-t_{32}E_{1}.
$$
Therefore, under the assumption $\mathcal{D}$ is skew-symmetric, we get $t_{_{ij}}=0$ whenever $i\neq j$. Also,
$\mathcal{D}(E_{1},E_{2})=-(1+t_{11})E_{3}=-\mathcal{D}(E_{2},E_{1})=-(1+t_{22})E_{3}$, 
so that $t_{11}=t_{22}$ ($=t_{33}$) and

\begin{equation}\label{eq_lad}
\mathcal{D}(X,Y)=-\eta(Y)\psi(X)-\Phi(X,Y)\xi+\eta(X)\psi(Y)-t_{11}\, \sum_{i=1}^{3}E_{i}^{\flat}(X)\sigma^{i}(Y).
\end{equation}
These computations can be summarized as follows.

\begin{theorem}\label{principalS3}
A  $\SU(2)$-invariant affine connection $\nabla$ on $\SS^3$ is metric if and only if there are $t_{_{ij}}\in \R$ with $i,j\in \{1,2,3\}$ such that
\begin{equation}\label{eq_2deS3}
\nabla_{X}Y=\nabla^{g}_{X}Y+\sum_{i,j=1}^{3}t_{_{ij}}E_{i}^{\flat}(X)\sigma^{j}(Y)
\end{equation}
for all $X,Y \in \mathfrak{X}(\SS^{3})$.
Moreover, $\nabla$ has totally skew-symmetric torsion if and only if  there is $ r \in \R$ such that  
\begin{equation}\label{eq_3deS3}
\nabla_{X}Y=\nabla^{g}_{X}Y+r\left(\Phi(X,Y)\xi-\eta(X)\psi(Y)+\eta(Y)\psi(X)\right).
\end{equation}

Every metric invariant connection  on $\SS^3$ with totally skew-symmetric torsion satisfies the $\nabla$-Einstein equation, being
$$
\mathrm{Sym}(\mathrm{Ric}^{\nabla})=2(1-r^2)\, g.
$$
\end{theorem}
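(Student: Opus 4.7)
The plan is to exploit the fact that the isotropy subalgebra is trivial for $\SS^3\cong\SU(2)$, so Nomizu's Theorem~\ref{nomizu} identifies $\SU(2)$-invariant affine connections with arbitrary bilinear maps $\alpha\colon\mathfrak{m}\times\mathfrak{m}\to\mathfrak{m}$, without any $\mathrm{Ad}(H)$-equivariance constraint. I would split the argument into three blocks, matching the three assertions.

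For the metric characterisation~(\ref{eq_2deS3}), I would apply Theorem~\ref{th_eldelascompatiblesconlametrica} to rewrite metric compatibility as $\alpha_{_\nabla}(X,-)\in\mathfrak{so}(\mathfrak{m},g)$ for every $X\in\mathfrak{m}$. Since $\{\sigma^1_o,\sigma^2_o,\sigma^3_o\}$ is a basis of $\mathfrak{so}(\mathfrak{m},g)$ and $\{E_1,E_2,E_3\}$ is a basis of $\mathfrak{m}$, any such $\alpha_{_\nabla}$ must take the form $\sum_{i,j}t_{ij}E_i^\flat(X)\sigma^j_o(Y)$ with nine free real parameters. Plugging this into Nomizu's reconstruction formula~(\ref{con2}) and propagating by $\SU(2)$-invariance of the left-invariant frame $\{E_1,E_2,E_3\}$ and of each $\sigma^j$ produces~(\ref{eq_2deS3}).

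For the skew-torsion case I would impose the skew-symmetry of the difference tensor $\mathcal{D}=\nabla-\nabla^g$ in expression~(\ref{eq_lad}). The diagonal evaluations $\mathcal{D}(E_i,E_i)=0$ force $t_{ij}=0$ whenever $i\neq j$, and the off-diagonal condition $\mathcal{D}(E_i,E_j)+\mathcal{D}(E_j,E_i)=0$ then forces $t_{11}=t_{22}=t_{33}$; a relabelling $r:=-(1+t_{11})$ turns the remaining one-parameter family into~(\ref{eq_3deS3}).

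The only genuinely new piece is the $\nabla$-Einstein claim. Here I would invoke the identity $\mathrm{Sym}(\mathrm{Ric}^\nabla)=\mathrm{Ric}^g-\tfrac14 S$ from~(\ref{formulicas}) and compute $S$ for the torsion $T^\nabla(X,Y)=2r\big(\Phi(X,Y)\xi-\eta(X)\psi(Y)+\eta(Y)\psi(X)\big)$. The computation is formally identical to the one performed in the proof of Corollary~\ref{333}, which gave $S=8r^2 g+8(n-1)r^2\,\eta\otimes\eta$ on $\SS^{2n+1}$; specialising to $n=1$ the anisotropic term automatically vanishes and $S=8r^2 g$. Combining with $\mathrm{Ric}^g=2g$ on $\SS^3$ (constant sectional curvature one in dimension three) yields $\mathrm{Sym}(\mathrm{Ric}^\nabla)=2(1-r^2)g$, manifestly proportional to $g$, so equation~(\ref{einstein}) holds for every $r\in\R$. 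I do not anticipate a serious obstacle: the sign conventions between $\xi=-E_3$, $\psi$ and $\{E_1,E_2,E_3\}$ need careful bookkeeping in the reduction step, but the phenomenon that makes $\SS^3$ special among higher-dimensional odd spheres is exactly that the coefficient $n-1$ kills the $\eta\otimes\eta$ correction that obstructs the Einstein condition in Corollary~\ref{333}.
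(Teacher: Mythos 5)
Your proposal is correct and follows essentially the same route as the paper: trivial isotropy plus Theorem~\ref{th_eldelascompatiblesconlametrica} to identify metric connections with $\hom(\mathfrak{m},\mathfrak{so}(\mathfrak{m},g))=\langle E_i^\flat\otimes\sigma^j_o\rangle$, skew-symmetry of the difference tensor on the frame $\{E_1,E_2,E_3\}$ to force $t_{ij}=0$ ($i\neq j$) and $t_{11}=t_{22}=t_{33}$ with $r=-(1+t_{11})$, and the $S$-tensor computation of Corollary~\ref{333} specialised to $n=1$, where the $(n-1)\,\eta\otimes\eta$ obstruction vanishes. The only nit is a reference slip: the general difference-tensor expression you want to impose skew-symmetry on is the unnumbered formula preceding~(\ref{eq_lad}), not~(\ref{eq_lad}) itself, which is already the reduced one-parameter form.
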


\begin{proof}
Take into account that  
$$
\sum_{i=1}^{3}E_{i}^{\flat}\otimes \sigma^{i}=(E_{2}^{\flat}\wedge E_{3}^{\flat})\otimes E_{1}+(E_{3}^{\flat}\wedge E_{1}^{\flat})\otimes E_{2}+(E_{1}^{\flat}\wedge E_{2}^{\flat})\otimes E_{3},
$$
but $((E_{2}^{\flat}\wedge E_{3}^{\flat})\otimes E_{1}+(E_{3}^{\flat}\wedge E_{1}^{\flat})\otimes E_{2})(X,Y)=-\eta(X)\psi(Y)+\eta(Y)\psi(X)$ and
$((E_{1}^{\flat}\wedge E_{2}^{\flat})\otimes E_{3})(X,Y)=\Phi(X,Y)\xi$. Thus the formulas (\ref{eq_2deS3})  and  (\ref{eq_3deS3})   follow from (\ref{eq_lad}).

In case $\nabla$ has totally skew-symmetric torsion, we can apply the proof of   Corollary~\ref{333}  to obtain the formula for $\mathrm{Sym}(\mathrm{Ric}^{\nabla})$, 
which in particular proves that  $(\SS^3,g,\nabla)$ is  $\nabla$-Einstein.
\end{proof}

\begin{remark}
{\rm Note that for a $3$-dimensional orientable Riemannian-Cartan manifold with skew-symmetric torsion, the $3$-form $\omega_{_\nabla}$ introduced in (\ref{tos}) must be a multiple of the volume form. This is our case for $\SS^3$. Indeed, it is a direct computation   that, for all $X,Y,Z\in \mathfrak{X}(\SS^3)$,
$$
g\big(\Phi(X,Y)\xi-\eta(X)\psi(Y)+\eta(Y)\psi(X),Z\big)=E^{\flat}_{1}\wedge E^{\flat}_{2} \wedge E^{\flat}_{3}(X,Y,Z),
$$
so that $\omega_{_\nabla}=2rE^{\flat}_{1}\wedge E^{\flat}_{2} \wedge E^{\flat}_{3}$.
}
\end{remark}

\begin{remark}
{\rm A complete study of the space of biinvariant affine connections on compact Lie groups can be found in \cite{otroLaquer}. There, Laquer shows that for a compact simple Lie group $G$, the space of biinvariant affine connections is one-dimensional in all cases except for $\SU(n)$ with $n\geq 3$. Therefore, for the special case of $\SU(2)\cong\SS^3$,   the space of  biinvariant affine connections is one-dimensional. Of course, this is not the case when we look only for left invariant connections, where we have a dependence on $27$-parameters.

For more results on invariant affine connections on Lie groups, we can mention that, viewing $G$   as a reductive homogeneous space of the group $G\times G$, there are three natural reductive decompositions which hence provide three different canonical connections (see \cite[p.~198]{Draper:KobayashiNomizu}).

}
\end{remark}

%%%%%%%%%%%%%%%%%%%%%%%%%%%%%%%%%%%%%%%%%%%%%%%%%%%%%%%%%%%%%%%%%%%%%%%%%%%%%%%%%%%%%%%%%%%%%%%%%%%%%%%%%%%%%%%%%%%%%%%%%%%%%%%%%%%%%%%%%%%%%%%%%%5
%%%%%%%%%%%%%%%%%%%%%%%%%%%%%%%%%%%%%%%%%%%%%%%%%%%%%%%%%%%%%%%%%%%%%%%%%%%%%%%
\section*{Appendix} 

Here  is a summary of the results of this work. %%55 millones de veces sin el "here there is"
\medskip

 \begin{center}
\begin{tabularx}{\textwidth}{| c  l | c | X |}
%\begin{tabularx}{15cm}{| c  l | c | X |}
\hline 
\multicolumn{4}{ |c| }{\ } \\ 
\multicolumn{4}{ |c| }{INVARIANT CONNECTIONS ON ODD DIMENSIONAL SPHERES} \\  
\multicolumn{4}{ |c| }{\ } \\ 
\hline \hline
 & Invariant &
7&
$\leftrightarrow\langle\{\alpha_{1},\,  \alpha_{\bf{i}},\,\beta_{1},\,\beta_{\bf{i}},\,\gamma_{1},\,  \gamma_{\bf{i}},\,\delta_1\}\rangle$
  \\  %\hdashline
  \cdashline{2-4} %\cr
  \vrule width 0pt height 14pt
  $\SS^{2n+1}$&Metric&3& {$\nabla^{g}_{X}Y+s_1(\Phi(X,Y)\,\xi+\eta(Y)\psi(X))  +
s_2(g(X,Y)\,\xi-\eta(Y)X)+s_3\eta(X)\psi(Y) $} 
\\
\cdashline{2-4} \vrule width 0pt height 14pt
  &Skew-Torsion&1& $ \nabla^{g}_{X}Y+s_1T^{c}(X,Y)$ \\ 
\cdashline{2-4} \vrule width 0pt height 14pt
&$\nabla$-Einstein&Point& $\nabla^{g}_{X}Y$
\\ \hline 
%
%\end{tabularx}
%
%\begin{tabularx}{\textwidth}{ |c|l|c|X|}
\hline
 & Invariant &
9&$\leftrightarrow\langle\{\alpha_{1},\,  \alpha_{\bf{i}},\,\beta_{1},\,\beta_{\bf{i}},\,\gamma_{1},\,  \gamma_{\bf{i}},\,\varepsilon_{1},\,\varepsilon_{\bf{i}},\,\delta_1\}\rangle$
  \\ 
 \cdashline{2-4}  \vrule width 0pt height 14pt
  $\SS^{7}$&Metric&5& $
\nabla^{g}_{X}Y+s_1\,  ( \Phi_1(X,Y)\,\xi_1+\eta_1(Y)\psi_1(X) )
 +s_2\,  \eta_1(X)\psi_1(Y)+
s_3\, \nabla^{g}_{X}\psi_1
+
s_4 \,  \Theta(X,Y)       +s_5 \,  \tilde\Theta(X,Y)
$
\\ 
\cdashline{2-4} \vrule width 0pt height 14pt
  &Skew-Torsion&3& $ \nabla^{g}_{X}Y+  s_1 T^{c}(X,Y)
 +
  s_4     \,   \Theta(X,Y)   +  s_5  \,  \tilde\Theta(X,Y)$  
 \\ 
 \cdashline{2-4}  \vrule width 0pt height 14pt
&$\nabla$-Einstein&Cone& $s_4^2+s_5^2=s_1^2$
\\   
\hline   
\hline
 & Invariant &
13&
$\leftrightarrow\langle\{\alpha_{1},\,  \alpha_{\bf{i}},\,\beta_{1},\,\beta_{\bf{i}},\,\gamma_{1},\,  \gamma_{\bf{i}},\,\hat\alpha_{1},\,  \hat\alpha_{\bf{i}},\,\hat\beta_{1},\,\hat\beta_{\bf{i}},\,\hat\gamma_{1},\,  \hat\gamma_{\bf{i}},\,\delta_1\}\rangle$
  \\ 
\cdashline{2-4}  \vrule width 0pt height 14pt
  $\SS^{5}$&Metric&7 & $\nabla^{g}_{X}Y+s_1(\Phi(X,Y)\,\xi+\eta(Y)\psi(X))  +
s_2\eta(X)\psi(Y)+ s_3(\Phi(\widehat{\psi}(X),Y)\,\xi+\eta(Y)\psi(\widehat{\psi}(X))) +s_4\eta(X)\psi(\widehat{\psi}(Y)) +s_5\eta(X)\widehat{\psi}(Y)+
s_6(g(\widehat{\psi}(X),Y)\,\xi-\eta(Y)\widehat{\psi}(X))+s_7 \nabla^{g}_{X}\psi$ 
\\ 
\cdashline{2-4} \vrule width 0pt height 14pt
  &Skew-Torsion&3& $ \nabla^{g}_{X}Y
  +s_1\big(\Phi(\widehat{\psi}(X),Y)\,\xi-\eta(X)\psi(\widehat{\psi}(Y))+\eta(Y)\psi(\widehat{\psi}(X))\big)+s_2 T^{c}(X,Y)
  +s_3\big( g(\widehat{\psi}(X),Y)\,\xi+\eta(X)\widehat{\psi}(Y)-\eta(Y)\widehat{\psi}(X)\big)$  
  \\ 
\cdashline{2-4}  \vrule width 0pt height 14pt
&$\nabla$-Einstein&Point& $\nabla^{g}_{X}Y$ 
\\ 
\hline
\hline
 & Invariant &
27& $   \nabla^{g}_{X}Y+\sum s_{ijk}E_i^{\flat}(X)E_j^{\flat}(Y)E_k   $
 \\ 
 \cdashline{2-4}  \vrule width 0pt height 14pt
  $\SS^{3}$&Metric&9& $ \nabla^{g}_{X}Y+\sum s_{ij}E_i^{\flat}(X)(E_{j}^{\flat}(Y)E_{j+1}-E_{j+1}^{\flat}(Y)E_{j} )$  
  \\ 
  \cdashline{2-4} \vrule width 0pt height 14pt
  &Skew-Torsion&1& $ \nabla^{g}_{X}Y+s_1T^{c}(X,Y)$  
  \\ 
  \cdashline{2-4} \vrule width 0pt height 14pt
&$\nabla$-Einstein&Line&  $ \nabla^{g}_{X}Y+s_1 T^{c}(X,Y)
$  \cr\hline 
\end{tabularx}
 \end{center}

\vspace{3pt}
\noindent
where $T^{c}$ denotes the torsion tensor of the characteristic connection of the Sasakian manifold given in Example \ref{222}.iii).

\section*{Acknowledgments} 
The authors are greatly indebted to Alberto Elduque for his nice and comprehensive notes about Nomizu's Theorem (essentially compiled in \cite{ApuntesAlbertoNomizu}), %which have extensively been used in Section~2,  
and for some  valuable hints on Lemma~\ref{algebra}.


\begin{thebibliography}{99}





\bibitem{Draper:surveyagricola}
I.~Agricola,
\emph{Nonintegrable geometries, torsion, and holonomy},
Handbook of pseudo-Riemannian geometry and supersymmetry, 277--346, IRMA Lect. Math. Theor. Phys., 16, Eur. Math. Soc., Z\"urich, 2010.




\bibitem{AgriFerr} 
I.~Agricola and A.C.~Ferreira,
 \emph{Einstein manifolds with skew-torsion}, 
 Q. J. Math. \textbf{65}  (2014), 717--741.  %%no. 3



\bibitem{AgriFri} 
I.~Agricola and T.~Friedrich,
 \emph{$3$-Sasakian manifolds in dimension $7$, their spinors and $G_2$-structures}, 
 J. Geom. Phys. \textbf{60} (2010), 326--332.  


\bibitem{AgriFri2} 
I.~Agricola and T.~Friedrich,
 \emph{A note on flat metric connections with antisymmetric torsion}, 
 Differential Geom. Appl. \textbf{28} (2010), 480--487.  %no.4





\bibitem{Draper:losLts}
P.~Benito, C.~Draper and A.~Elduque,
\emph{On some algebras related to simple Lie triple systems},
J. Algebra \textbf{219} (1999),
234--254.

 

\bibitem{Besse}
A.L. Besse,
Einstein Manifolds, 
A Series of Modern Surveys in Mathematics,
Springer, 1986.
 

\bibitem{Blair}
D.E.~Blair,
Riemannian geometry of contact and symplectic manifolds, 
Progress in  Mathematics,
{ 203},
Birkh\"auser, 2002.


\bibitem{casiS5perono}
M.~Bobie\'nski and P.~Nurowski,
\emph{Irreducible $SO(3)$ geometry in dimension five},
J. reine angew. Math. \textbf{605} (2007), 51--93.

\bibitem{refS6}
R.~Bryant,
\emph{On the geometry of almost complex 6-manifolds,}
 Asian J. Math. \textbf{10} (2006), 561--605.
 

\bibitem{Cartancita}
\'{E}.~Cartan,
\emph{Les r\'ecentes g\'en\'eralisations de la notion d'espace,}
 Bull. Sci. Math. \textbf{48}  (1924), 294--320.

\bibitem{CartanSchouten}  
\'{E}.~Cartan and J.A.~Schouten, 
\emph{On Riemannian manifolds admitting an absolute parallelism}, Proc. Amsterdam \textbf{29} (1926), 933--946.

\bibitem{nuestroPadge}
C.~Draper and F.J.~Palomo,
\emph{Homogeneous Riemann-Cartan spheres},
Pure and Applied Differential Geometry: in Memory of Franki Dillen, PADGE 2012 (2013),
126--134.



\bibitem{ApuntesAlbertoNomizu}
A.~Elduque,
\emph{Reductive homogeneous spaces and nonassociative algebras},
 Notes  of a course to be given at
a CIMPA research school, to be held in Marrakech (April 13-24, 2015).
Preprint arXiv:1503.03227v1.


\bibitem{Draper:S6}
A.~Elduque and H.C.~Myung,
\emph{Color algebras and affine connections on $S^6$},
J. Algebra \textbf{149} (1992),
234--261.

 


\bibitem{Submer}
M.~Falcitelli, S.~Ianus and A.M.~Pastore,
Riemannian submersions and related topics,
World Scientific, 2004.




\bibitem{FriIva} 
T.~Friedrich and S.~Ivanov,
\emph{Parallel spinors and connections with skew-symmetric torsion in string theory,} 
Asian J. Math. \textbf{6} (2002), 303--335. %no 2



\bibitem{Draper:losrusosRiemCar}
I.A.~Gordeeva, V.I.~Pan'zhenski\u{\i} and S.E.~Stepanov,    %la ultima i de ii con semicirculo
\emph{Riemann-Cartan manifolds},
Journal of Mathematical Sciences \textbf{169} (2010),
342--361.


\bibitem{Gray}
A.~Gray,
\emph{Vector cross products on manifolds},  
Trans. Amer. Math. Soc. \textbf{141} (1969), 465--504.

 

\bibitem{Draper:Humphreysalg}
J.E.~Humphreys,
  Introduction to Lie algebras and representation theory,
Graduate Texts in Mathematics  {9},
Springer-Verlag, New-York, 1978.

\bibitem{Draper:KobayashiNomizu}
S.~Kobayashi and K.~Nomizu,
Foundations of Differential Geometry,
  \textbf{  II},
Wiley Classics Library,
Princeton,
1969.

\bibitem{otroLaquer}
H.T.~Laquer,
\emph{Invariant affine connections on Lie groups},
Trans. Amer. Math. Soc.
\textbf{331} (1992), 541--551.


 

\bibitem{Draper:teoNomizu}
K.~Nomizu,
\emph{Invariant affine connections on homogeneous spaces},
Amer. J. Math. \textbf{76} (1954),
33--65.


\bibitem{RepresentacionesReales}
 A.L.~Onishchik, 
  Lectures on real semisimple Lie algebras and their representations. 
  ESI Lectures in Mathematics and Physics. European Mathematical Society (EMS), Z\"urich, 2004.  

\bibitem{otroTanaka}
C.~Stadtm\"uller,
\emph{Adapted connections on metric contact manifolds},
 J. Geom. Phys. \textbf{62} (2012), 2170--2187.   
 

\bibitem{Draper:Tricerri}
F.~Tricerri and L.~Vanhecke,
Homogeneous Structures on Riemannian manifolds,
London Mathematical Society, Lecture Notes Series {83},
Cambridge University Press,
Cambridge,
1983.


\bibitem{Warner}
F.~Warner,
Foundations of Differentiable Manifolds and Lie Groups,
Scott-Foresman, Glenview,
1971.





\end{thebibliography}
\end{document}